\numberwithin{equation}{section}
\newtheorem{theorem}{Theorem}[section]
\newtheorem{definition}[theorem]{Definition}
\newtheorem{proposition}[theorem]{Proposition}
\newtheorem{corollary}[theorem]{Corollary}
\newtheorem{lemma}[theorem]{Lemma}
\newtheorem{conjecture}[theorem]{Conjecture}
\newcommand{\cali}[1]{\mathscr{#1}}
\newcommand{\U}{{\rm U}}
\newcommand{\volume}{{\rm vol}}
\newcommand{\supp}{{\rm supp}}
\newcommand{\diff}{{\rm d}}
\newcommand{\dist}{\mathop{\mathrm{dist}}\nolimits}
\newcommand{\ddc}{{\rm dd}^c}
\newcommand{\Tub}{{\rm Tub}}
\newcommand{\omegaFS}{ \omega_{\mathrm{FS}}}
\DeclarePairedDelimiter\ceil{\lceil}{\rceil}
\DeclarePairedDelimiter\floor{\lfloor}{\rfloor}
\newcommand{\PSH}{{\rm PSH}}
\newcommand{\PC}{{\rm PC}}
\newcommand{\Cc}{\cali{C}}
\newcommand{\Dc}{\cali{D}}
\newcommand{\Ec}{\cali{E}}
\newcommand{\Fc}{\cali{F}}
\newcommand{\FS}{{\rm FS}}
\newcommand{\B}{\mathbb{B}}
\newcommand{\D}{\mathbb{D}}
\newcommand{\C}{\mathbb{C}}
\newcommand{\K}{\mathbb{K}}
\newcommand{\N}{\mathbb{N}}
\newcommand{\Z}{\mathbb{Z}}
\newcommand{\R}{\mathbb{R}}
\renewcommand\P{\mathbb{P}}
\newcommand{\W}{\mathbb{W}}
\newcommand{\I}{\mathbb{I}}
\newcommand{\bbS}{\mathbb{S}}
\newcommand{\rmC}{{\rm C}}
\newcommand{\diam}{{\rm diam}}
\newcommand{\fB}{\mathfrak{B}}
\newcommand{\fW}{\mathfrak{W}}
\title[Exponential equidistribution of periodic points]{Exponential equidistribution of periodic points for endomorphisms of $\P^k$}
\address{Universit\'e Paris 13, Sorbonne Paris Nord, LAGA, CNRS (UMR 7539),
F-93430, Villetaneuse, France. }
\email{ dethelin@math.univ-paris13.fr}
\address{National University of Singapore, Lower Kent Ridge Road 10,
Singapore 119076, Singapore}
\email{matdtc$@$nus.edu.sg }
\address{Institut Denis Poisson, CNRS, Universit\' e d'Orl\' eans, Rue de Chartres, B.P. 6759, 45067, Orl\' eans cedex 2, FRANCE}
\email{lucas.kaufmann@univ-orleans.fr}
\keywords{Holomorphic endomorphisms, repelling periodic points, equilibrium measure, Julia set, exponential equidistribution}
\begin{document}

\begin{abstract}
Let $f$ be a holomorphic endomorphism of $\P^k$ of algebraic degree $d\geq 2$.  We show that the periodic points of $f$ of period $n$ equidistribute towards the equilibrium measure of $f$ exponentially fast as $n$  tends to infinity. This quantifies a theorem of Lyubich for $k=1$ and of Briend-Duval for $k\geq 2$.  A byproduct of our proof is the existence of a large number of periodic cycles in the small Julia set with large multipliers.
\end{abstract}

\maketitle

\section{Introduction} \label{s:intro}

Let $\P^k$ be the $k$-dimensional complex projective space and $f: \P^k \to \P^k$ be a holomorphic endomorphism of algebraic degree $d \geq 2$,  see Section \ref{s:prelim} for the basic definitions appearing in this introduction.   It follows from the seminal works of Brolin \cite{brolin}, Freire-Lopes-Ma\~n\'e \cite{freire-lopes-mane} and Lyubich \cite{lyubich2, lyubich}  when $k=1$ and Fornaess-Sibony \cite{fornaess-sibony:93} and Hubbard-Papadopol \cite{hubbard-papadopol} in general that $f$ admits a canonical invariant measure $\mu$, called the \emph{equilibrium measure of $f$}.  This is a probability measure on $\P^k$ characterized by various dynamical properties.  In particular, it is exponentially mixing and it is the unique invariant probability measure of maximal entropy,  see \cite{dinh-sibony:cime} for an overview.   We highlight two important properties satisfied by $\mu$,  namely two equidistribution theorems.

The first equidistribution theorem concerns iterated pre-images of non-exceptional points,   see \cite{brolin,freire-lopes-mane,lyubich,briend-duval:IHES,dinh-sibony:cime}.  More precisely, there exists an algebraic exceptional set $\Ec \subset \P^k$ such that
\begin{equation} \label{eq:equidist-preimage}
  \lim_{n \to \infty} \frac{1}{d^{kn}} \sum_{x \in f^{-n}(a)} \delta_x = \mu \quad \text{for every} \quad  a \in \P^k \setminus \Ec,
\end{equation}
where the convergence is in the weak sense.    Sibony and the second author showed that the above convergence can be quantified and  is exponentially fast in the following sense.  Let  $1 < \lambda < d^{1/2}$.  There exists a constant  $C_{a,\lambda} >0$ and a proper algebraic subset $\Ec_\lambda$ of $\P^k$ such that if $a \in \P^k \setminus \Ec_\lambda$,  then  for every test function $\phi$  on $\P^k$ of class $\Cc^1$ we have
\begin{equation} \label{eq:equidist-preimage-speed}
\bigg| \Big \langle  \frac{1}{d^{kn}} \sum_{x \in f^{-n}(a)} \delta_x,  \phi \Big \rangle  - \int_{\P^k} \phi \, \diff \mu \bigg| \leq  C_{a,\lambda} \|\phi \|_{\Cc^1} \frac{1}{\lambda^n} \, ,
\end{equation}
where $\delta_x$ is the Dirac mass at $x$.
Moreover,  the constant $C_{a,\lambda}$  is proportional to  $ \big(1 + \log^+ (1 / \dist(a,  \Ec_\lambda)\big)^{1/2}$, where $\log^+:=\max(\log,0)$.  See \cite{dinh-sibony:equid-speed},  or Theorem \ref{t:eq-pt} below for another version of this result.

A second equidistribution theorem satisfied by $\mu$ concerns the periodic points of $f$. This is  a fundamental result of Briend-Duval \cite{briend-duval:acta},  obtained previously by Lyubich when $k=1$ \cite{lyubich}.  
For each $n \geq 1$, let $P_n := \{ x \in \P^k : f^n(x) = x \}$ be the set of periodic points of period $n$ of $f$. Then,   
\begin{equation} \label{eq:equidist-periodic-points}
 \lim_{n \to \infty} \frac{1}{d^{kn}} \sum_{a \in P_n} \delta_a = \mu,
\end{equation}
where again the convergence is in the weak sense.   Differently from the case of pre-images,  the estimation on the speed of convergence in \eqref{eq:equidist-periodic-points} is a more challenging problem.  When $k=1$,  Favre--Rivera-Letelier and Okuyama obtained an exponential speed of convergence when $f$ is defined over a number field,  see \cite{favre-rivera-letelier,okuyama}.  The rate of convergence in these papers is likely optimal. In an earlier version of this paper, we obtained the exponential speed for all polynomial maps in $\C$.
In a very recent preprint,  Gauthier-Vigny  \cite{gauthier-vigny:periodic} generalize the techniques of Favre-Rivera-Letelier to cover the case of any rational function,  not necessarily one with algebraic coefficients.  In higher dimensions,  
Yap \cite{yap} obtained an exponential speed of convergence for endomorphisms of $\P^2$ defined over a number field and in \cite{DY25}, Yap and the second author extended this result to higher dimensions, still for maps defined over a number field.

In this work, which replaces the previous arXiv version for polynomials, we solve this question in any dimension.   Moreover, our proof shows that most of the points of $P_n$ are repelling and belong to the small Julia set.

\begin{theorem} \label{t:main}
Let $f$ be a holomorphic endomorphism of $\P^k$ of algebraic degree $d\geq 2$,  $\mu$ be its equilibrium measure and $J_k:=\supp(\mu)$ be its small Julia set.   Then,  as $n$ tends to infinity, the repelling periodic points of period $n$ of $f$ on $J_k$ are equidistributed with respect to $\mu$ at an exponential rate.

More precisely,  for every constants $0<\gamma<1$ and $0<\alpha \leq 1$,  there exist a constant  $0<\xi<1$ independent of $\alpha$ and another constant $A_\alpha>0$ such that the following holds.
 
Let $P_n$ be the set of periodic points of period $n$ of $f$. Let $P_{n,\gamma}$ be the set of points $a\in P_n\cap J_k$ such that $\|Df^n(a)^{-1}\|\leq d^{-\frac{1-\gamma}{2}n}$ and $Q_n$ be any set such that $P_{n,\gamma}\subset Q_n\subset P_n$,  counting  multiplicities or not.  Then, we have
$$\Big| \Big\langle\frac{1}{d^{kn}}\sum_{a\in Q_n} \delta_a -\mu, \phi \Big\rangle \Big| \leq A_\alpha \xi^{\alpha n} \|\phi\|_{\Cc^\alpha},$$
for any $\Cc^\alpha$ test function $\phi$ on $\P^k$, where $\delta_a$ denotes the Dirac mass at $a$.
\end{theorem}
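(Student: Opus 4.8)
The plan is to make the Briend--Duval construction quantitative: for each large $n$ I will produce a set $R_n\subseteq P_{n,\gamma}$ of repelling periodic points of period $n$ lying in $J_k$, with $\#R_n\geq(1-A\tau^n)d^{kn}$ for some fixed $\tau<1$, such that $\frac1{d^{kn}}\sum_{a\in R_n}\delta_a$ is already $\xi^{\alpha n}$-close to $\mu$ in the $\Cc^\alpha$-dual norm, for some fixed $\xi<1$ independent of $\alpha$. Granting this, the conclusion follows at once. Indeed, the Lefschetz fixed point formula bounds the number of isolated fixed points of $f^n$ on $\P^k$, with or without multiplicity, by $1+d^n+\cdots+d^{kn}=(1+O(d^{-n}))d^{kn}$; since the points of $P_{n,\gamma}\supseteq R_n$ are nondegenerate ($\id-Df^n(a)$ is invertible because the eigenvalues of $Df^n(a)$ have modulus $>1$), we get $\#(Q_n\setminus R_n)\leq \#P_n-\#R_n\leq(A\tau^n+O(d^{-n}))d^{kn}$, so the total variation of $\frac1{d^{kn}}\sum_{a\in Q_n}\delta_a-\frac1{d^{kn}}\sum_{a\in R_n}\delta_a$ is $O(\tau^n)$ in either convention, and
$$\Big|\Big\langle \tfrac1{d^{kn}}\textstyle\sum_{a\in Q_n}\delta_a-\mu,\ \phi\Big\rangle\Big|\ \leq\ O(\tau^n)\,\|\phi\|_{\Cc^0}\ +\ \Big|\Big\langle \tfrac1{d^{kn}}\textstyle\sum_{a\in R_n}\delta_a-\mu,\ \phi\Big\rangle\Big|\ \leq\ A_\alpha\,\xi^{\alpha n}\,\|\phi\|_{\Cc^\alpha},$$
after enlarging $\xi$ to $\max(\xi,\tau)<1$ (still independent of $\alpha$; one uses $0<\alpha\le1$ and $\tau<1$ to absorb $\tau^n$ into $\xi^{\alpha n}$).

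To build $R_n$, fix an exponentially small scale $r_n=\sigma^n$ with $\sigma<1$ close to $1$, take a Borel partition $\P^k=\bigsqcup_j\Omega_j$ into sets of diameter $\leq r_n$, and for each $j$ with $\mu(\Omega_j)>0$ fix $x_j\in\Omega_j\cap\supp\mu$ and set $B_j':=B(x_j,2r_n)\supseteq\Omega_j$. Call an inverse branch $g$ of $f^n$ over $B_j'$ \emph{good} if it is holomorphic on $B_j'$, satisfies $\|Dg\|\leq d^{-\frac{1-\gamma}{2}n}$ there, and carries $\Omega_j$ into itself. A good $g$ maps $B_j'$ into a relatively compact subset of $B_j'$ (its image has diameter $\leq 4r_n\,d^{-\frac{1-\gamma}{2}n}\ll r_n$), hence has a unique fixed point $a$; then $f^n(a)=a$ and $Df^n(a)^{-1}=Dg(a)$, so $\|Df^n(a)^{-1}\|\leq d^{-\frac{1-\gamma}{2}n}$ and $a$ is repelling, and since $\supp\mu$ is totally invariant $g$ maps the nonempty compact set $\overline{\supp\mu\cap B_j'}$ into itself, so $a\in\supp\mu=J_k$; thus $a\in P_{n,\gamma}$. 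Distinct good branches over $B_j'$ yield distinct fixed points (a repelling fixed point of $f^n$ determines its local inverse branch), and such a fixed point lies within $4r_n\,d^{-\frac{1-\gamma}{2}n}$ of $\Omega_j$. Letting $m_j(n)$ be the number of good branches over $B_j'$ and $R_n$ the set of all resulting fixed points, one gets $R_n\subseteq P_{n,\gamma}$ with $\#R_n=\sum_j m_j(n)$ up to a cross-term which is negligible for a partition chosen so that $\mu$ does not concentrate near the walls $\bigcup_j\partial\Omega_j$. Since $\diam\Omega_j\leq r_n$ and $R_n$ carries total mass $1+o(1)$, approximating $\phi$ on $\Omega_j$ by $\phi(x_j)$ reduces both the equidistribution estimate for $R_n$ and the lower bound $\#R_n\geq(1-A\tau^n)d^{kn}$ to the single combinatorial estimate
$$\sum_j\big|\,m_j(n)-d^{kn}\mu(\Omega_j)\,\big|\ \leq\ A\,\tau^n\,d^{kn},$$
from which one reads off the rate $\xi=\max(\sigma,\tau)$.

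To prove this estimate, I compare good branches over $B_j'$ with the fibre $f^{-n}(z_j)$ of a $\mu$-generic reference point $z_j\in\Omega_j$: each $x\in f^{-n}(z_j)\cap\Omega_j$ determines the branch $g_x$ with $g_x(z_j)=x$, and $g_x$ is good precisely when it extends holomorphically to $B_j'$ and contracts there by $d^{-\frac{1-\gamma}{2}n}$. First, $\#\big(f^{-n}(z_j)\cap\Omega_j\big)=d^{kn}\mu(\Omega_j)$ up to an error which, summed over the $\asymp r_n^{-2k}$ pieces, is $\leq A\tau^n d^{kn}$: this is exactly the effective equidistribution of preimages, \eqref{eq:equidist-preimage-speed} and Theorem~\ref{t:eq-pt}, applied to regularizations of $\ind_{\Omega_j}$ at scale $r_n$, the loss of a polynomial factor $r_n^{-O(1)}$ against the gain $\lambda^{-n}$ being absorbed by choosing $\sigma$ close enough to $1$ and $\lambda$ close to $d^{1/2}$. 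It then remains to bound, summed over $j$, the number of $x$ for which $g_x$ fails to extend to $B_j'$ or fails the contraction bound. The failure to extend is a \emph{ramification} event --- $g_x$ extends unless the corresponding component of $f^{-n}(B_j')$ meets the critical set of $f^n$ --- so such $x$ are counted by the $\mu$-mass of an $r_n$-neighbourhood of $\bigcup_{i=0}^{n-1}f^i(\text{critical values of }f)$, which decays exponentially using the integrability of $\log\dist(\cdot,\text{critical set of }f)$ against $\mu$, the exponential mixing of $\mu$, and again effective equidistribution. The failure of the contraction bound says that an ergodic average of $-\log\|Df\|$ along the backward orbit overshoots the Lyapunov floor $-\tfrac{1-\gamma}{2}\log d$; since every Lyapunov exponent of $\mu$ is at least $\tfrac12\log d$ (Briend--Duval), this is a large deviation in the unfavourable direction, which --- after a Koebe/bounded-distortion step passing from $Dg_x$ at $z_j$ to its supremum over $B_j'$, a truncation to tame the logarithmic singularities of the expansion cocycle along the critical set, and exponential mixing --- affects only an exponentially small proportion of the $x$. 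Combining the three estimates yields the combinatorial bound above, hence the theorem.

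I expect the decisive obstacle to be this last point: promoting the qualitative ``most inverse branches of $f^n$ are well defined and contracting'' of Briend--Duval to an \emph{exponentially} effective statement --- in particular, exponential large deviations for the unbounded, critically singular expansion cocycle against the only-exponentially-mixing measure $\mu$, uniformly in the geometry of the balls $B_j'$. A close second is the accounting: the effective equidistribution of preimages degrades as $r_n$ shrinks, so $r_n$ must be exponentially small yet not too small, and one must check that the several exponential rates entering the combinatorial bound --- from the preimage equidistribution, the ramification estimate, and the contraction large deviation --- fuse into a single $\tau<1$, hence a single $\xi<1$ independent of $\alpha$.
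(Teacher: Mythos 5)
Your high-level architecture matches the paper's: inverse branches of $f^n$ over exponentially small cells, effective equidistribution of preimages to count them, a fixed-point argument to convert good branches into repelling periodic points on $J_k$, and a B\'ezout/Lefschetz cardinality count to pass from a lower bound on $\langle\cdot,\phi\rangle$ for $R_n\subset P_{n,\gamma}$ to the two-sided estimate for any $Q_n$. But you have misplaced the difficulty, and your proposed fix --- exponential large deviations for the unbounded, critically singular cocycle $-\log\|Df\|$ against the only-exponentially-mixing $\mu$, uniformly over the balls $B_j'$ --- is neither how the paper proceeds nor clearly viable. The paper does not control $\|Dg\|$ over $B_j'$ by any dynamical cocycle estimate. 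Instead Proposition~\ref{p:inverse} controls the \emph{diameter} of the image of each inverse branch by a purely geometric argument (the total area of $f^{-m}(\Delta)$ for a line $\Delta$ is $d^{(k-1)m}$, so all but an exponentially small fraction of branches have small area and hence, by Sibony--Wong, small diameter), and Lemma~\ref{l:fixed} then extracts the bound on $Df^n$ \emph{only at the fixed point} via a Cauchy/Schwarz-lemma scaling --- which is all the statement requires, since one only claims $\|Df^n(a)^{-1}\|\leq d^{-(1-\gamma)n/2}$ at $a\in P_{n,\gamma}$, not along the whole branch. This sidesteps entirely the large-deviation problem you flag as your ``decisive obstacle.''

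The second genuine gap is the ramification estimate. You need $\mu(\Tub(\PC_{\lfloor\sigma n\rfloor};r_n))$ to decay exponentially, and you cite integrability of $\log\dist(\cdot,\rmC_f)$ against $\mu$, exponential mixing, and effective equidistribution. But $\deg\PC_{\lfloor\sigma n\rfloor}\asymp d^{(k-1)\sigma n}$ grows exponentially, so the usual moderateness bound $\mu(\Tub(V;\varepsilon))\leq C(V)\varepsilon^\beta$ is useless here: $C(V)$ grows with $\deg V$, and there is no straightforward way to leverage invariance or mixing to kill that growth, since pulling $\Tub(f^i(\rmC_f);\varepsilon)$ back by $f^{-i}$ distorts the metric uncontrollably near the critical set. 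This is precisely why all of Section~3 of the paper is devoted to Proposition~\ref{p:neigh-mes}, which proves $\mu(\Tub(V;\delta^{-\kappa}))\leq\delta^{-1}$ for every subvariety $V$ of degree $\leq\delta$, with $\kappa$ depending only on $k$ and the H\"older exponent of the potential, by an induction on ``type'' through central projections, standard boxes, and a local Skoda-type estimate. The paper's introduction even warns that this ``is not a simple consequence of the fact that $\mu$ is moderate.'' Your sketch treats it as routine; it is the main new technical lemma of the paper.
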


 The constant $\xi$ above can be made explicit in terms of $d$ and the H\"older exponent of the Green function of $f$,  but this is not optimal as the above mentioned results for $k=1$ show.  The condition $\|Df^n(a)^{-1}\|\leq d^{-\frac{1-\gamma}{2}n}$ implies that the multipliers of the $n$-cycle containing $a$ are bounded from below by $d^{\frac{1-\gamma}{2}n}$.  This control is likely optimal.

When $k=1$ every repelling periodic point is in the Julia set.  Moreover, the Fatou-Shishikura inequality asserts that there are at most $2d-2$ nonrepelling cycles.  However,  when $k\geq 2$,  Fornaess-Sibony showed that some endomorphisms may have infinitely many repelling periodic points outside $J_k$,  see \cite{fornaess-sibony:examples}.  Our proof allows us to estimate the number of such points.  

\begin{corollary} \label{c:main}
Let  $A_n$  denote the number of non-repelling periodic points of order $n$ of $f$ and $B_n$  the number of periodic points of order $n$ outside the small Julia set $J_k$, counting  multiplicities.  Then, there exists a constant $0<\xi<1$  such that
$A_n = O(\xi^n d^{kn})$ and $B_n = O(\xi^n d^{kn})$ as $n$ tends to infinity. 
\end{corollary}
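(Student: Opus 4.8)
The plan is to deduce Corollary \ref{c:main} directly from Theorem \ref{t:main} by testing against the constant function $\phi \equiv 1$ and then performing elementary bookkeeping with multiplicities and multipliers.

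First I would fix once and for all some $\gamma \in (0,1)$ (say $\gamma = 1/2$) and apply Theorem \ref{t:main} with $\alpha = 1$ and $\phi \equiv 1$, noting that $\langle \mu, \phi \rangle = 1$ and that $\|\phi\|_{\Cc^1}$ is a fixed finite constant. Applying it with $Q_n := P_n$ counted with multiplicity, and then with $Q_n := P_{n,\gamma}$, gives
\begin{equation*}
\#P_n = d^{kn}\bigl(1 + O(\xi^n)\bigr) \qquad \text{and} \qquad \#P_{n,\gamma} = d^{kn}\bigl(1 + O(\xi^n)\bigr),
\end{equation*}
where $\#P_n$ is counted with multiplicity, while each point of $P_{n,\gamma}$ is automatically a nondegenerate fixed point of $f^n$ (since $1$ is not an eigenvalue of $Df^n(a)$ when $a$ is repelling), hence of multiplicity one. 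Subtracting, the number $N_n$ of periodic points of period $n$ lying outside $P_{n,\gamma}$, counted with multiplicity, satisfies $N_n = \#P_n - \#P_{n,\gamma} = O(\xi^n d^{kn})$. (One could alternatively use the classical B\'ezout--Lefschetz count $\#P_n = 1 + d^n + \dots + d^{kn}$ in place of the first estimate, but this is not needed.)

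Next I would observe that both quantities $A_n$ and $B_n$ are bounded above by $N_n$. Indeed, a periodic point $a$ of period $n$ with $a \notin J_k$ cannot lie in $P_{n,\gamma}$, because $P_{n,\gamma} \subset J_k$ by definition. And if $a$ is a non-repelling periodic point of period $n$, then $Df^n(a)$ has an eigenvalue of modulus at most $1$, so $\|Df^n(a)^{-1}\| \geq 1 > d^{-\frac{1-\gamma}{2}n}$ since $d \geq 2$ and $0 < \gamma < 1$; hence $a \notin P_{n,\gamma}$. Summing the multiplicities of the points concerned gives $A_n \leq N_n$ and $B_n \leq N_n$, which is exactly the claimed bound $O(\xi^n d^{kn})$, with $\xi$ the constant furnished by Theorem \ref{t:main}.

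There is essentially no obstacle here once Theorem \ref{t:main} is in hand; the only points deserving care are the multiplicity bookkeeping (one must know that the points of $P_{n,\gamma}$ are simple) and the elementary fact that a non-repelling cycle of period $n$ has $\|Df^n(a)^{-1}\| \geq 1$, which disqualifies it from the ``strongly repelling'' set $P_{n,\gamma}$. Conceptually, the entire force of the corollary comes from the leading-order matching $\#P_{n,\gamma} = d^{kn}(1 + o(1))$ in Theorem \ref{t:main}, which forces the complementary set of periodic points to have cardinality $O(\xi^n d^{kn})$.
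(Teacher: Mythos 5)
Your proposal is correct and takes essentially the same route as the paper: the paper's proof of the corollary also reduces to showing $A_n, B_n \leq \#(P_n \setminus P_{n,\gamma}) = O(\xi^n d^{kn})$, obtained by comparing $\#P_{n,\gamma}$ (from testing against $\phi\equiv 1$) with the total count $\#P_n$. The only cosmetic difference is that the paper bounds $\#P_n$ via the classical B\'ezout/Lefschetz formula $\#P_n = d^{kn} + O(d^{(k-1)n})$, whereas you obtain the same estimate by a second application of Theorem~\ref{t:main} with $Q_n = P_n$ (and correctly note the classical alternative); your explicit remarks on simplicity of points in $P_{n,\gamma}$ and on $\|Df^n(a)^{-1}\|\geq 1$ for non-repelling points are exactly the facts the paper leaves implicit.
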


We note that the periodic points with minimal period  $n$ also equidistribute towards $\mu$ with exponential speed.  This is because  the number of points of $P_n$ having a period $p$ diving $n$ is $d^{kp} + O(d^{(k-1)p})$  and $\sum_{p|n} \big( d^{kp} +O(d^{(k-1)p}) \big) \lesssim d^{kn/2}$ which is exponentially small compared to $d^{kn}$.

\smallskip

 We now describe the overall structure of the proof.  It is based on the construction of good inverse branches with a  control over their geometry.  Given $f: \P^k \to \P^k$ denote by $\rmC_f$ its critical set and by $\PC_m:=f(\rmC_f)\cup f^2(\rmC_f)\cup \cdots \cup f^m(\rmC_f)$ its post-critical set of order $m$. 

 We start by fixing a good atlas $\Omega_1, \ldots,  \Omega_M$ of $\P^k$ and, for a small parameter $r>0$,  a suitable covering of $\P^k$ by cells  $$\W_{r,\eta}^{k,j}, \quad  \eta \in \Z^{2k}, \,\,   j= 0, \ldots,M$$
 that are biholomorphic to cubes in $\C^k \simeq \R^{2k}$,  see Section \ref{s:branch}.   The side length $r$ of these cubes depend on $n$ and shrink exponentially to zero when $n$ tends to infinity.  
 
 We first show that we can discard cells touching a small neighborhood of $\PC_{\floor{\sigma n}}$ for some small constant $\sigma>0$.  For this,  we show that the mass of $\mu$ over neighborhoods of analytic subsets of $\P^k$ is small (Proposition \ref{p:neigh-mes}).   Since the degree of $\PC_{\floor{\sigma n}}$ grows exponentially with $n$, one needs a fine estimate,  which is not a simple consequence of the fact that $\mu$ is moderate. 
 
    Once the cells close to $\PC_{\floor{\sigma n}}$ are removed,  one can produce many inverse branches with good control on their diameters (see Proposition \ref{p:inverse}).  Our goal is then to show that on the cells  intersecting $J_k$ one can find many inverse branches of $f^n$ mapping $\W_{r,\eta}^{k,j}$ to a smaller cell $(1-r)\W_{r,\eta}^{k,j}$.  By Kobayashi hyperbolicity, this  yields a fixed point of $f^n$ inside this cell together with a control on the derivative,  producing a repelling periodic point of $f$ on $J_k$ with large derivative.  In order to show that cells are mapped to smaller ones,   we  must use the quantitative equidistribution \eqref{eq:equidist-preimage-speed}.  This  gives a quantifiable  way of controlling the mixing between cells and determines how many of them must been thrown away,  which is crucial if we are searching for the speed of convergence.     This is where the particularly simple geometry of our covering is useful.

\subsection*{Acknowledgments}
This project has received funding from the National University of Singapore and MOE of Singapore through the grants A-8002488-00-00.  The second author would like to thank Paris 13 University and the University of Orléans for their hospitality during his visits to these institutions. Part of this work was conducted during the third author's visits to the Department of Mathematics at the National University of Singapore and the Institute for Mathematical Sciences, Singapore. He gratefully acknowledges their warm hospitality and financial support.  He also received funding from the “Loi de programmation de la recherche” through the Université d’Orléans.

\section{Notations  and preliminaries} \label{s:prelim}

We introduce in this section some notations and basic results needed in the sequel.  We refer to \cite{dinh-sibony:cime} for more details.

Let $\P^k$ be the $k$-dimensional complex projective space.  By definition,  it is the quotient of $\C^{k+1} \setminus \{0\}$ by the equivalence relation $z \sim \lambda z$ for $\lambda \in \C^*$.  If $z=(z_0, \ldots,z_k) \in \C^{k+1}$, we will denote by $[z] = [z_0: \cdots: z_k]$ its equivalence class and call them \textit{homogeneous coordinates}.  For every $j=0,\ldots,k$ the open sets $U_j = \{z_j \neq 0\} = \{z_j = 1\} \subset \P^k$ are biholomorphic to $\C^k$.  We call them\textit{ standard affine charts}.

The Fubini-Study metric on $\P^k$ is,  up to a multiplicative constant, the unique $\U(k+1)$-invariant hermitian metric on $\P^k$. This is a K\"ahler metric whose associated hermitian form is the Fubini-Study form ---  a positive closed smooth  $(1,1)$-form that we denote by $\omegaFS$.  We normalize it so that $\int_{\P^k} \omegaFS^k = 1$.  In particular,  it follows that $\omegaFS^k$ is a smooth probability measure on $\P^k$.

If $\Omega$ is an open subset of $\P^k$,  a function $\varphi: \Omega \to \R \cup \{-\infty\}$, not identically $-\infty$ in any connected component of $\Omega$, is {\it plurisubharmonic (p.s.h.)} if it is upper semicontinuous and if the restriction of $\varphi$ to every holomorphic disc inside $\Omega$ is subharmonic or identically $-\infty$.  A function $u: \P^k \to \R \cup \{-\infty\}$ is {\it quasi-plurisubharmonic (q.p.s.h.)} if it is locally the difference of a p.s.h.  function and a smooth one.  If furthermore $ \omegaFS + \ddc u$ is a positive closed current, we say that $u$ is \textit{$\omegaFS$-plurisubharmonic }and we write $\omegaFS$-p.s.h. We denote by $ \PSH(\P^k,\omegaFS)$ the set of all such functions.

Let $f: \P^k \to \P^k$ be a holomorphic endomorphism.  In homogeneous coordinates it is given by $f=[P_0: \cdots: P_k]$ where the $P_j$ are homogeneous polynomials of the same degree $d$ without common zeros on $\C^{k+1} \setminus \{0\}$.  We call $d$ the \textit{algebraic degree} of $f$ and always assume that $d \geq 2$.  It can be shown using B\'ezout's theorem that the \textit{topological degree} of $f$, that is,  the number of pre-images of a given point $a \in \P^k$ counted with multiplicity, is exactly $d^k$.  The same theorem can be used to show that the number of periodic points of period $n$ counted with multiplicities is $(d^{(k+1)n}-1) / (d^n-1) = d^{kn} + O(d^{(k-1)n})$.

The \textit{Green current} of $f$ is the positive closed $(1,1)$-current defined by  $$T:= \lim_{n \to \infty} \frac{1}{d^n} (f^n)^*\omegaFS.$$
It can be shown that $T$ has H\"older continuous potential in the sense that $T = \omegaFS + \ddc g$ for some  $\omegaFS$-p.s.h.  function $g$ that is H\"older continuous on $\P^k$.  In particular, Bedford-Taylor's theory applies and one can define the associated Monge-Amp\`ere mesasure
$$\mu:= T^k = ( \omegaFS + \ddc g)^k.$$
This is the so-called \textit{equilibrium measure} of $f$.  The support of $\mu$,  denoted by $J_k$ is called {\it small Julia set} of $f$.  Clearly, it is contained in the {\it Julia set} $J_1:= \supp (T)$.  The complement $\P^k \setminus J_1$ is the {\it Fatou set},  i.e.,  the domain of normality of the family of iterates $(f^n)_{n \in \N}$.  The sets $J_1$ and $\P^k \setminus J_1$ will not play an important role in this work.

\section{Mass of Monge-Amp\`ere measures near analytic sets}

In this section we obtain an estimate of the mass of Monge-Amp\`ere measures on neighbourhoods of subvarieties of $\P^k$.  In the proof of our main theorem, we will apply this estimate to a high-order postcritical set of $f$,  see Section \ref{s:branch}.

Let $V \subset \P^k$ be a closed subset.  For $\varepsilon >0$ we will denote by
$$\Tub(V;\varepsilon):= \{x \in \P^k: \dist(x,V) < \varepsilon\}$$
the $\varepsilon$-tubular neighborhood of $V$, where $\dist$ denotes the distance induced by the Fubini-Study metric.

We say that a probability measure $\nu$ on $\P^k$ is a   \textit{Monge-Amp\`ere measure with H\"older continuous potential} if it is of the form $$\nu = (\omegaFS + \ddc u)^k,$$ where $u$ is $\omegaFS$-p.s.h. and H\"older continuous.  The equilibrium measure $\mu$ is an example of such a measure.  By the main result in \cite{DNS:JDG},  such measures are \textit{moderate},  that is,  there are constants $\beta > 0$ and $C>0$ depending only on $k$ and the H\"older exponent of $u$ such that
$$\int_{\P^k} e^{-\beta \varphi} d\nu \leq C \quad \text{for all } \varphi \in \PSH(\P^k,\omegaFS) \, \text{ such that } \max \varphi = 0.$$

We will also need to work in the local setting, that is, with local Monge-Amp\`ere measures of  the form $\nu = (\ddc v)^k$ where $v$ is a H\"older continuous p.s.h.\ function defined on an open subset $\Omega$ of $\C^k$. By \cite[Corollary 4.3]{kaufmann:skoda},  there is a constant $c(v) > 0$ depending only on $k$ and on the H\"older exponent of $v$ such that the following holds.  If $K \subset \Omega$ is a compact subset and $\varphi$ is a p.s.h.\ function on $\Omega$ whose Lelong numbers on $K$ are bounded from above by $L >0$,  then for every $\gamma < c(v) L^{-1}$ there exists $C_{\varphi,  \gamma} > 0$ such that
\begin{equation} \label{eq:skoda-sublevel}
\nu(\{ \varphi< - M\} \cap K) \leq C_{\varphi,  \gamma}  e^{-\gamma M}, \,   \text{for every } M > 0.
\end{equation}

Recall that the degree of  a proper subvariety $V \subset \P^k$ of dimension $\ell$ is the number of points  in the intersection $V \cap H$,  where $H$ is a generic projective subspace of $\P^k$ of dimension $k-\ell$.

The main result of this section is the following.

\begin{proposition} \label{p:neigh-mes}
Let $\nu$ be a  Monge-Amp\`ere probability measure with H\"older continuous potential  on $\P^k$.  There exists a constant $\kappa \geq 1$ such that, for any proper subvariety $V \subset \P^k$ of degree $\leq \delta$,  we have
$$\nu(\Tub(V;\delta^{-\kappa})) \leq \delta^{-1}.$$
The constant $\kappa$ depends only on $k$ and on the H\"older exponent of the potential of $\nu$.
\end{proposition}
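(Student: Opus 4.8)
The plan is to reduce the global estimate on $\P^k$ to the local sublevel-set inequality \eqref{eq:skoda-sublevel} by exhibiting, for a given proper subvariety $V\subset\P^k$ of degree $\leq\delta$, a q.p.s.h.\ (or local p.s.h.) function $\varphi_V$ whose singularities detect the tubular neighborhood $\Tub(V;\delta^{-\kappa})$ while keeping its Lelong numbers controlled by the degree $\delta$. The natural candidate is a potential of the integration current $[V]$ (or of a suitable positive closed current dominating $[V]$): up to a normalizing factor, one has $[V] = \delta\,\omegaFS^{\codim V} + \ddc S$ in the sense of currents, and slicing/restricting to a generic linear $\C^k\hookrightarrow\P^k$ one can write, on a fixed chart $\Omega$, a p.s.h.\ function $\psi_V$ with $\ddc\psi_V \geq [V]\cap\Omega$ and $\psi_V \leq 0$, normalized so that its Lelong number at each point of $V$ is at least $1$ and — crucially — its Lelong numbers everywhere are bounded by $\const\cdot\delta$ (this is where the degree bound enters: the mass of $[V]$ through any ball is $\lesssim\delta$ by Bishop/Lelong–Jensen). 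Equivalently one can take $\psi_V = \frac1{2}\log\big(\sum_i |h_i|^2\big)$ for a collection $h_i$ of homogeneous polynomials of degree $\deg V$ cutting out $V$, suitably dehomogenized on each chart.

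The key quantitative point is a Łojasiewicz-type comparison: on a compact piece $K\Subset\Omega$ there are constants (depending only on $k$) such that
$$
\psi_V(x) \;\leq\; \const + \deg V\cdot \log \dist(x,V) \qquad\text{for } x\in K,
$$
so that $\{x\in K:\dist(x,V)<\delta^{-\kappa}\}\subset\{\psi_V < -M\}$ with $M = (\kappa - \const)\deg V \cdot\log\delta$ roughly. Since $\deg V\leq\delta$, the Lelong numbers of $\psi_V$ on $K$ are bounded by $L:=\const\cdot\delta$, so \eqref{eq:skoda-sublevel} applies with any $\gamma < c(v)/(\const\,\delta)$; choosing $\gamma$ comparable to $c(v)/(\const\,\delta)$ yields
$$
\nu\big(\{\psi_V<-M\}\cap K\big)\;\leq\; C_{\psi_V,\gamma}\,e^{-\gamma M}\;\leq\; C\,\exp\!\Big(-\tfrac{c(v)}{\const\,\delta}\cdot(\kappa-\const)\,\delta\log\delta\Big)\;=\;C\,\delta^{-c'(\kappa-\const)} .
$$
Taking $\kappa$ large enough (depending only on $k$ and the Hölder exponent, through $c(v)$ and the absolute Łojasiewicz constants) makes the right-hand side $\leq \delta^{-1}$, at least for $\delta$ bounded below; the remaining small range of $\delta$ is absorbed by enlarging $\kappa$ further, using $\nu\leq 1$. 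One subtlety is that \eqref{eq:skoda-sublevel} is stated for a local background measure $\nu=(\ddc v)^k$ on $\Omega\subset\C^k$, whereas $\mu$ lives on $\P^k$; this is handled by covering $\P^k$ by finitely many charts $\Omega_1,\dots,\Omega_M$ (say the standard affine charts, or the good atlas of Section \ref{s:branch}), writing the global Hölder potential $u$ of $\nu$ locally as $v_j$ plus a smooth function so that $\nu|_{\Omega_j} = (\ddc v_j)^k$ there, and summing the $M$ local estimates; the number $M$ of charts depends only on $k$ and is harmless.

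The main obstacle is obtaining the Lelong-number bound uniformly in $V$ with the \emph{right} dependence on the degree, i.e.\ that the worst Lelong number of the chosen potential $\psi_V$ grows at most linearly in $\deg V$ — this is exactly what makes the exponent in \eqref{eq:skoda-sublevel} scale like $1/\delta$ and then interact correctly with $M\sim\delta^{0}$ and $\log\delta$ to beat $\delta^{-1}$. If instead one used a crude potential whose singularities pile up (e.g.\ naive products of linear forms), the Lelong numbers could be as large as $(\deg V)^{k}$ and the argument would collapse. The cleanest way to guarantee linear growth is to realize $[V]$ as the intersection of a generic pencil's worth of hypersurfaces of degree $\deg V$ and invoke the standard fact that a positive closed $(p,p)$-current of mass $\leq m$ has Lelong numbers $\leq m$; alternatively, use that $\log\big(\sum|h_i|^2\big)^{1/2}$ with the $h_i$ a minimal generating set has Lelong number at most $\deg V$ at every point. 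Once this linear bound is in hand, the rest is the routine Łojasiewicz estimate plus bookkeeping of constants, all of which depend only on $k$ and the Hölder exponent of the potential of $\nu$, as claimed.
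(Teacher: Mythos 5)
There is a genuine gap, and it sits at the heart of your argument. Your Łojasiewicz-type comparison $\psi_V(x)\leq \const+\deg V\cdot\log\dist(x,V)$ is false except in degenerate cases: if $h$ is a (reduced) defining polynomial of degree $\deg V$, then at a smooth point of $V$ the function $h$ vanishes only to order one, so along a transversal direction $|h(x)|\sim\dist(x,V)$, not $\dist(x,V)^{\deg V}$. The correct easy inequality is $\psi_V\leq\log\dist(\cdot,V)+O(\log\delta)$, with coefficient $1$ in front of $\log\dist$. With this correction your sublevel depth on the tube is only $M\sim\kappa\log\delta$, while the admissible exponent in \eqref{eq:skoda-sublevel} is $\gamma<c(v)L^{-1}\sim c(v)/\delta$ because the Lelong numbers of $\psi_V$ can indeed be of order $\delta$. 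Hence the bound you obtain is $C\,\delta^{-c\,\kappa/\delta}$, which tends to $1$ as $\delta\to\infty$; no choice of $\kappa$ independent of $\delta$ yields $\leq\delta^{-1}$. This scaling obstruction is intrinsic: any direct application of the moderate/Skoda-type estimate to a single potential attached to $V$ (including $\frac1\delta\log|h|$, which only shifts the problem from $\gamma$ to $M$) loses a factor $\delta$ in the exponent, and this is exactly the difficulty the paper points out when it says the estimate ``is not a simple consequence of the fact that $\mu$ is moderate.'' A secondary issue is that the constant $C_{\varphi,\gamma}$ in \eqref{eq:skoda-sublevel} depends on $\varphi=\psi_V$, so you would also need uniformity over all $V$ of degree $\leq\delta$, which you do not address.

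For comparison, the paper circumvents the problem by never applying the Skoda-type inequality to a potential of degree comparable to $\delta$. After reducing to hypersurfaces (Lemma \ref{lemma:codim1}), it argues by induction on the type of $V$ (Definition \ref{def:type}): it chooses a projection center at distance $\gtrsim\delta^{-1/2}$ from $V_0$ (Lemma \ref{l:exist-I}), removes a neighborhood of the ramification cone, which has degree $\leq\delta^2$ and lower type and is handled by the induction hypothesis (Lemma \ref{l:ramification}), and covers the rest of $V$ by $\delta$ disjoint holomorphic graphs over anisotropic standard boxes. On each graph it applies \eqref{eq:skoda-sublevel} only to the fixed function $\log|y|$, whose Lelong numbers are $\leq 1$ (Lemma \ref{l:Gamma-neigh}); the degree enters only polynomially, through the number of graphs and the degradation of the Hölder constant under the box rescaling $L$, and such polynomial losses are absorbed by taking $\kappa$ large. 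If you want to salvage your approach, you would need a mechanism that keeps the Lelong numbers of the detecting potential bounded independently of $\delta$ while still capturing the tube at depth $\sim\kappa\log\delta$; that is essentially what the paper's graph decomposition achieves.
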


The strategy of the proof is the following. At first, we observe that we can reduce to the case where $V$ is an algebraic hypersurface of degree $\delta$. The second step is to prove a local version of the desired estimate for hypersurfaces given by local holomorphic graphs. The third step, the most technical one, consists of considering a central projection from a point $a \in \P^k$. Away from a small set of ramifications, one can see $V$ as a disjoint union of graphs and use step two. In order to control the ramifications we use an induction step that will be explained below.

\smallskip

We start by considering  the  case of projective subspaces.

\begin{lemma} \label{l:H-neigh}
Let $\nu$ be a moderate probability measure on $\P^k$. There are constants $A>0$ and $\beta>0$ such that for every proper projective subspace $H \subset \P^k$ and $0<t<1$ one has $$\nu(\Tub(H;t))\leq A t^\beta.$$
\end{lemma}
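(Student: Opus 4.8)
The goal is to bound $\nu(\Tub(H;t))$ for $H$ a proper projective subspace, uniformly in $H$, by using only that $\nu$ is moderate. The key observation is that the distance function to $H$ can be compared with a function of the form $e^{u}$ for a suitable $\omegaFS$-p.s.h. function $u$ associated to $H$, so that the set $\Tub(H;t)$ is contained in a sublevel set $\{u<-M\}$ with $M\asymp\log(1/t)$, and then the moderateness inequality converts the exponential integrability of $e^{-\beta u}$ into the desired polynomial bound in $t$.

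\begin{proof}
By intersecting with a generic hyperplane containing $H$ we may assume that $H$ is a hyperplane; indeed if $H$ has codimension $c\geq 1$ we pick a hyperplane $H'\supset H$, note $\Tub(H;t)\subset\Tub(H';t)$, and it suffices to treat $H'$. So write $H=\{\ell=0\}$ where $\ell$ is a linear form on $\C^{k+1}$ with $\|\ell\|=1$ (the Euclidean norm of its coefficient vector). Define
$$u_H(x):=\log\frac{|\ell(z)|}{\|z\|}, \qquad x=[z]\in\P^k.$$
This is a well-defined function on $\P^k$, it is $\omegaFS$-p.s.h. since $\omegaFS+\ddc u_H=[H]$ is the current of integration on $H$ (equivalently $u_H$ is a global potential of $[H]-\omegaFS$), and $\max_{\P^k}u_H=0$ because $|\ell(z)|\leq\|\ell\|\,\|z\|=\|z\|$ by Cauchy--Schwarz, with equality attained. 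The standard comparison between the Fubini--Study distance and the chordal-type quantity $|\ell(z)|/\|z\|$ gives a constant $c_0>0$, depending only on $k$, with
$$\dist(x,H)\leq c_0\,\frac{|\ell(z)|}{\|z\|}=c_0\,e^{u_H(x)}\qquad\text{for all }x=[z].$$
Hence if $x\in\Tub(H;t)$ then $e^{u_H(x)}<t/c_0$, i.e. $u_H(x)<\log(t/c_0)=:-M$ with $M=\log(c_0/t)$. For $t$ small enough (say $t<c_0$) we have $M>0$.

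Now apply moderateness of $\nu$: there are constants $\beta>0$, $C>0$, independent of $H$, such that $\int_{\P^k}e^{-\beta u_H}\,d\nu\leq C$ since $u_H\in\PSH(\P^k,\omegaFS)$ with $\max u_H=0$. By Chebyshev's inequality,
$$\nu(\{u_H<-M\})\leq e^{-\beta M}\int_{\P^k}e^{-\beta u_H}\,d\nu\leq C\,e^{-\beta M}=C\,(t/c_0)^{\beta}.$$
Therefore $\nu(\Tub(H;t))\leq C\,c_0^{-\beta}\,t^{\beta}$ for $0<t<c_0$, and we absorb the case $c_0\leq t<1$ by enlarging the constant (there $t^\beta\geq c_0^\beta$ while $\nu\leq 1$). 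Setting $A$ to be the resulting constant gives $\nu(\Tub(H;t))\leq At^\beta$ for all $0<t<1$, with $A$ and $\beta$ depending only on $k$ and the moderateness constants of $\nu$.
\end{proof}

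The only genuinely delicate point is the uniformity: the potential $u_H$ varies with $H$, so one must invoke moderateness in the form that controls $\int e^{-\beta\varphi}\,d\nu$ uniformly over \emph{all} $\varphi\in\PSH(\P^k,\omegaFS)$ with $\max\varphi=0$ — which is exactly the definition recalled above — rather than for a single fixed potential. The comparison $\dist(x,H)\lesssim e^{u_H(x)}$ with a constant independent of $H$ follows from the $\U(k+1)$-homogeneity of the situation (any two hyperplanes are related by a unitary transformation, which is an isometry for $\omegaFS$), so no further work is needed there.
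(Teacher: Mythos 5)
Your proof follows essentially the same route as the paper's: reduce to a hyperplane, take the normalized potential $u_H$ of $[H]-\omegaFS$ with $\max u_H=0$, compare it with $\log\dist(\cdot,H)$ uniformly in $H$, and convert moderateness into a polynomial bound via a Chebyshev-type estimate. There is, however, one step written with the comparison in the wrong direction: from $\dist(x,H)\le c_0\,e^{u_H(x)}$ together with $\dist(x,H)<t$ you cannot deduce $e^{u_H(x)}<t/c_0$; what the argument needs is the reverse bound $e^{u_H(x)}\le C\,\dist(x,H)$, equivalently $u_H\le\log\dist(\cdot,H)+c$ with $c$ independent of $H$, which is precisely the inequality the paper invokes and which does hold because $|\ell(z)|/\|z\|$ and the Fubini--Study distance to $H$ are comparable in both directions (uniformly in $H$ by unitary homogeneity). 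With that direction corrected, the containment $\Tub(H;t)\subset\{u_H<\log t+c\}$ is valid and the rest of your argument --- moderateness applied uniformly to all $u_H$, the exponential-to-polynomial conversion, and absorbing the range $c_0\le t<1$ into the constant --- coincides with the paper's proof.
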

\begin{proof}
After replacing $H$ by a hyperplane containing it, we can assume that $H$ has dimension $k-1$. Let $u_H$ be the unique $\omega_\FS$-p.s.h.\ function on $\P^k$ such that 
$$\ddc u_H = [H]-\omega_\FS \quad \text{ and } \quad \max u_H =0.$$
It is not difficult to deduce from the above equation that $u_H\leq \log \dist(\cdot,H)+c$ for some constant $c>0$ independent of $H$.

Since $\nu$ is  moderate, we have  that  $\langle \nu, e^{-\beta u_H}\rangle \leq C$ for some constants $\beta > 0$ and $C>0$.  It follows that  $\langle \nu, e^{-\beta \log\dist(\cdot, H) -\beta c}\rangle \leq C$.   Since $-\log\dist(\cdot, H) \geq -\log t$ on $\Tub(H;t)$, we deduce that $e^{-\beta \log t -\beta c} \nu(\Tub(H;t))  \leq C$. So, we have $\nu(\Tub(H;t)) \leq C e^{\beta c}  t^\beta$ and the lemma follows.
\end{proof}

We now treat the case of local holomorphic graphs. Let $A >0$ and $0< \alpha \leq 1$. In what follows, we say that a function $v$ defined on an open subset $\Omega$ of $\C^k$ is 
{\it $(A,\alpha)$-H\"older continuous} if
$$|v(x) - v(y)| \leq A \|x-y\|^\alpha \quad \text{for all } x,y \in \Omega.$$

\begin{lemma} \label{l:Gamma-neigh}
Let $v$ be an $(A,\alpha)$-H\"older continuous p.s.h. function on $10\D^k$. Let $h$ be a holomorphic function on $10 \D^{k-1}$ such that $|h(x)| \leq 1$ for all $x \in 10\D^{k-1}$. Denote by $$\Gamma_h = \{ (x,y) \in 10\D^{k-1} \times 10\D: y = h(x)\}$$ the graph of $h$ and for $0<t<1$ let
$$W_t:=\big\{ (x,y) \in 2\D^{k}, \quad |y- h(x)|<t \big\}.$$
 Then, there are constants $A'>0$ and $\alpha'>0$ depending only on $k, A, \alpha$ such that for every $0<t<1$ the mass of $(\ddc v)^k$ on $W_t$ is bounded by $A' t^{\alpha'}$. 
\end{lemma}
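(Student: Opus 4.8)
The plan is to reduce the estimate on the graph neighborhood $W_t$ to the situation of Lemma \ref{l:H-neigh}, or rather to its local analogue \eqref{eq:skoda-sublevel}, by exhibiting a plurisubharmonic function whose sublevel sets capture $W_t$. First I would introduce the biholomorphism $\Phi \colon (x,y) \mapsto (x, y - h(x))$ of $10\D^{k-1} \times 10\D$ onto its image, which straightens the graph $\Gamma_h$ to the slice $\{y = 0\}$ and, since $h$ is holomorphic and bounded by $1$, maps $2\D^k$ into $3\D^k$ (say) and contains a fixed polydisc $P_0$ around the origin. The push-forward $w := v \circ \Phi^{-1}$ is again p.s.h., and by the Cauchy estimates on $h$ its H\"older norm is controlled by $A, \alpha, k$ only; thus $w$ is $(A'', \alpha)$-H\"older on $P_0$ for a constant $A''$ depending only on the stated data. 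Under $\Phi$ the set $W_t$ goes to a set contained in $\{(x,y) \in 3\D^k : |y| < t\}$, so it suffices to bound the mass of $(\ddc w)^k$ on the "slab" $\{|y| < t\} \cap K$, where $K$ is a suitable compact polydisc, by $A' t^{\alpha'}$.

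The key step is then to dominate the slab by a sublevel set of a fixed p.s.h. function with bounded Lelong numbers. Take $\varphi(x,y) := \log|y|$, which is p.s.h. on $P_0$, has Lelong number equal to $1$ along $\{y=0\}$ and $0$ elsewhere, so its Lelong numbers on any compact $K \Subset P_0$ are bounded by $L = 1$. On the slab $\{|y| < t\}$ we have $\varphi < \log t$, i.e. $\varphi < -M$ with $M = -\log t > 0$. Applying the local moderateness estimate \eqref{eq:skoda-sublevel} from \cite[Corollary 4.3]{kaufmann:skoda} to the H\"older continuous p.s.h. function $w$ — whose H\"older exponent is $\alpha$, giving a constant $c(w) > 0$ depending only on $k$ and $\alpha$ — we obtain, for any fixed $\gamma < c(w)$ (using $L=1$), a constant $C_{\varphi,\gamma}$ with
$$(\ddc w)^k\big(\{\varphi < -M\} \cap K\big) \leq C_{\varphi,\gamma}\, e^{-\gamma M} = C_{\varphi,\gamma}\, t^{\gamma}.$$
Setting $\alpha' := \gamma$ and tracing back through $\Phi$ (which only changes the constant by a bounded factor, as $\Phi$ is bi-Lipschitz on the relevant compact sets with constants depending on $\|h\|_{\Cc^1} \lesssim 1$) yields the mass bound $A' t^{\alpha'}$ on $W_t$ with $A', \alpha'$ depending only on $k, A, \alpha$.

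The main obstacle I anticipate is not conceptual but bookkeeping: one must check that all the radii fit together — that $\Phi$ indeed maps the closure of the relevant part of $2\D^k$ into a compact subset of the polydisc where $w$ is defined and H\"older, so that $K$ can be chosen compactly inside the domain and \eqref{eq:skoda-sublevel} genuinely applies — and that the constant $C_{\varphi,\gamma}$ coming from \eqref{eq:skoda-sublevel}, which a priori depends on $\varphi$, is in fact uniform here because $\varphi = \log|y|$ is a single fixed function independent of $h$ (only its restriction to the moving compact set matters, but the Lelong number bound $L=1$ is uniform). A secondary point is to make sure the bound is stated for all $0 < t < 1$: for $t$ close to $1$ the estimate is trivial since $(\ddc v)^k$ has bounded total mass on $2\D^k$ (again by H\"older continuity of $v$ and Bedford–Taylor theory), so one only needs the above argument for small $t$, absorbing the rest into the constant $A'$.
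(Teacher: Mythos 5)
Your proposal is correct and follows essentially the same route as the paper: straighten the graph with $\Phi(x,y)=(x,y-h(x))$, use Cauchy estimates on $h$ to keep the H\"older data uniform, and apply the Skoda-type estimate \eqref{eq:skoda-sublevel} to the fixed function $\varphi(x,y)=\log|y|$ with Lelong numbers at most $1$, yielding the bound $C t^{\gamma}$ for any $\gamma<c(v)$.
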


\begin{proof}
We first observe that we can reduce the lemma to the case $h=0$. In order to see that, consider the holomorphic map $\Phi(x,y) = (x,y-h(x))$ for  $(x,y) \in 10\D^{k}$. Then $\Phi$ is biholomorphic onto its image, it maps $\Gamma_h$ to $\{y=0\}$ and its inverse is $\Phi^{-1}(x,y) = (x,y+h(x))$.  The mass of $(\ddc v)^k$ over $W_t$ equals the mass of $(\ddc (v \circ \Phi^{-1}))^k$ over $\Phi(W_t)$. By Cauchy's formula, since $\|h\|_{\infty} \leq 1$  the derivatives of order 1 of $h$ over $5\D^{k-1}$ are bounded by a constant independent of $h$, so the Jacobians of $\Phi$ and $\Phi^{-1}$ are bounded by a constant independent of $h$.  In particular, if $v$ is $(A,\alpha)$-H\"older continuous then $v \circ \Phi^{-1}$ is $(CA,\alpha)$-H\"older continuous for some constant $C >0$ independent of $h$. We can therefore assume that $h=0$ as claimed.  Because of $\Phi$, we can  assume that $v$ is defined only on $10\D^{k-1}\times 9\D$. 

Assume from now on that $h=0$.  In this case $\Gamma_h = \{y=0\}$ and $W_t = 2\D^{k-1} \times \{|y| < t\}$. Set $\varphi(x,y) :=  \log |y|$. Then $\varphi$ is p.s.h., its Lelong numbers at every point are at most $ 1 $ and  $W_t \subset \{\varphi < \log |t| \} \cap 2\overline{\D^{k}}$. By \eqref{eq:skoda-sublevel}, it follows that the mass of $(\ddc v)^k$ on $W_t$ is bounded by $C_{\varphi,  \gamma} e^{ \gamma \log |t|} = C_\gamma |t|^{\gamma}$
whenever $\gamma < c(v)$. The lemma follows by taking any  $\gamma < c(v)$  and setting $\alpha' = \gamma$.
\end{proof}

Our proof will be by induction on $\ell=1,\ldots,k$. We will need the following lemma.

\begin{lemma} \label{l:exist-I}
Let $\ell \geq 1$ be an integer. There exists a constant $c_\ell>0$ such that if $V_0$ is a hypersurface of degree at most $\delta$  in $\P^\ell$  there is a point $a \in \P^\ell$  such that $\dist(a,V_0)\geq c_\ell \delta^{-1/2}$.
\end{lemma}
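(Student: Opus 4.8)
We want to find a point far from $V_0$ in $\P^\ell$. The natural strategy is a volume (mass) packing argument: cover $\P^\ell$ by balls of radius comparable to $\delta^{-1/2}$ and count. If \emph{every} point of $\P^\ell$ were within distance $c_\ell \delta^{-1/2}$ of $V_0$, then $\P^\ell$ would be covered by the tubular neighbourhood $\Tub(V_0; c_\ell \delta^{-1/2})$, and it suffices to bound the volume of this tube by something strictly less than $\volume(\P^\ell)=1$ when $c_\ell$ is small. So the key estimate is:
$$\volume\big(\Tub(V_0;\varepsilon)\big) \leq C_\ell\, (\deg V_0)\, \varepsilon^2 \qquad \text{for } 0<\varepsilon<1,$$
with $C_\ell$ depending only on $\ell$. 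Applying this with $\varepsilon = c_\ell \delta^{-1/2}$ gives volume $\leq C_\ell \, \delta \cdot c_\ell^2 \delta^{-1} = C_\ell c_\ell^2 < 1$ once $c_\ell^2 < 1/C_\ell$, a contradiction; hence such a point $a$ exists. (One could equally phrase this with the Fubini–Study measure $\omegaFS^\ell$, which is the normalized volume.)

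First I would establish the tube-volume bound. For a \emph{hyperplane} $H\subset\P^\ell$ this is classical and elementary: $\volume(\Tub(H;\varepsilon)) \lesssim \varepsilon^2$ (it is comparable to the normalized area of an $\varepsilon$-disc around a point in the transverse $\P^1$-direction). For a general hypersurface $V_0$ of degree $\delta$, I would reduce to the hyperplane case by a Crofton-type / integral-geometric argument: one integrates over the space of lines (or of $(\ell-1)$-planes, i.e. pencils of hyperplanes) in $\P^\ell$, uses that a generic line $L$ meets $V_0$ in exactly $\delta$ points (Bézout), and controls $\dist(x,V_0)$ along $L$ by the distance to the nearest of those $\delta$ intersection points. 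Integrating the one-dimensional estimate $\length(\{z \in L : \dist(z, V_0\cap L) < \varepsilon\}) \lesssim \delta\,\varepsilon^2$ over the line family, and invoking the invariance of Fubini–Study measure under $\U(\ell+1)$ together with the coarea/Crofton formula, yields $\volume(\Tub(V_0;\varepsilon)) \lesssim \delta\,\varepsilon^2$ with an implied constant depending only on $\ell$. Alternatively, and perhaps more cleanly, I would use Lelong–Jensen: writing $[V_0] = \delta\,\omegaFS + \ddc u_{V_0}$ with $u_{V_0}$ normalized by $\max u_{V_0}=0$, one has a uniform bound $u_{V_0} \leq \log\dist(\cdot,V_0) + c_\ell + \tfrac{1}{2}\log\delta$ or similar (the mass-$\delta$ analogue of the bound used in Lemma \ref{l:H-neigh}), and then a direct computation or a moderateness-type estimate for the smooth measure $\omegaFS^\ell$ bounds the measure of $\{\dist(\cdot,V_0)<\varepsilon\}$.

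The step I expect to be the main obstacle is making the dependence on $\delta$ exactly linear (so that the final $\varepsilon=c_\ell\delta^{-1/2}$ scaling closes the argument) while keeping all constants depending only on $\ell$ and uniform over all hypersurfaces of that degree. A soft moderateness argument for $\omegaFS^\ell$ would give $\volume(\Tub(V_0;\varepsilon)) \leq C(\delta)\,\varepsilon^\beta$ but possibly with a bad dependence of $C$ on $\delta$ and an exponent $\beta$ not equal to $2$; the Crofton / Bézout route is what pins down both the linear factor $\delta$ and the sharp exponent $2$, essentially because a line meets $V_0$ in $\delta$ points and a $\varepsilon$-neighbourhood of a point on a Riemann surface has area $\asymp \varepsilon^2$. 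Once the tube-volume bound is in hand with the right shape, the conclusion is the short packing/pigeonhole argument above, and $c_\ell$ can be read off as any constant with $c_\ell < C_\ell^{-1/2}$.
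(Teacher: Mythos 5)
Your global strategy---cover $\P^\ell$ by the tube $\Tub(V_0;c_\ell\delta^{-1/2})$, bound its volume by $C_\ell\,\delta\,\varepsilon^2$, and pigeonhole against $\volume(\P^\ell)=1$---is sound and is essentially the contrapositive of the paper's argument. The paper packs $N\sim\gamma c_\ell^{-2\ell}\delta^\ell$ disjoint balls of radius $2c_\ell\delta^{-1/2}$ and invokes Lelong's monotonicity, $\volume_{2\ell-2}(V_0\cap B)\geq c'_\ell r^{2\ell-2}$ whenever $V_0$ meets $\tfrac12 B$, to derive that if every ball center were within $c_\ell\delta^{-1/2}$ of $V_0$, the $(2\ell-2)$-volume of $V_0$ would exceed its value $\delta/(\ell-1)!$, which is a contradiction. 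Both are packing arguments, but yours needs the tube-volume inequality as an extra lemma, and that is exactly where your write-up is incomplete.

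The Crofton proof you sketch for $\volume(\Tub(V_0;\varepsilon))\lesssim\delta\varepsilon^2$ has a real gap: you need the inclusion $\Tub(V_0;\varepsilon)\cap L\subseteq\{z\in L:\dist(z,V_0\cap L)<c\,\varepsilon\}$ in order to transfer the one-dimensional length bound, but this inclusion is \emph{false}. For $z\in L$ with $\dist(z,V_0)<\varepsilon$, the nearest point of $V_0$ need not lie on $L$, so $\dist(z,V_0\cap L)$ can be much larger. Concretely, if $V_0$ is a hyperplane and $L$ makes a small angle $\theta$ with it while passing within $\varepsilon/2$ of it, then $\Tub(V_0;\varepsilon)\cap L$ has length on the order of $\varepsilon/\theta$, whereas $V_0\cap L$ is a single point and the one-dimensional tube around it has length $\sim\varepsilon$. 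One would have to show that the contribution of nearly tangent lines is negligible after integration, which is genuinely more delicate than the sketch suggests. Your alternative via a pointwise bound $u_{V_0}\leq\log\dist(\cdot,V_0)+c_\ell+\tfrac12\log\delta$ is a plausible guess but is asserted without proof and is itself a nontrivial uniform estimate over all degree-$\delta$ hypersurfaces. The clean fix is to derive the tube bound from the same tool the paper uses: take a maximal $2\varepsilon$-separated family $\{y_i\}\subset V_0$; by Lelong, the disjoint balls $B(y_i,\varepsilon)$ each carry $\gtrsim\varepsilon^{2\ell-2}$ of the $(2\ell-2)$-volume of $V_0$, so there are $O(\delta\varepsilon^{-2\ell+2})$ of them, and $\Tub(V_0;\varepsilon)\subseteq\bigcup_i B(y_i,3\varepsilon)$ then gives $\volume(\Tub(V_0;\varepsilon))\lesssim\delta\varepsilon^2$. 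Once you do that, your argument closes and becomes a dual rephrasing of the paper's.
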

\begin{proof}
We denote by  $\volume(B) := \frac{1}{\ell !}  \int_{B} \omegaFS^{\ell}$ the volume of a ball $B \subset \P^\ell$ with respect to the Fubini-Study metric and  by $\volume_{2\ell -2}(V_0 \cap B) := \frac{1}{(\ell -1)!}  \int_{V_0 \cap B} \omegaFS^{\ell - 1}$ the corresponding $(2\ell -2)$-dimensional volume of $V_0 \cap B$. A classical theorem by Lelong \cite{lelong} implies that for every ball $B$ of radius $r$  inside $\P^\ell$ such that $V_0$ intersects $\frac12 B$ we have that $\volume_{2\ell -2}(V_0\cap B) \geq c_\ell' r^{2 \ell - 2}$ for some dimensional constant $c_\ell' >0$.   

Let $c_\ell>0$ and  $\gamma>0$  be two independent constants. Set $N:= \floor{\gamma c_\ell^{-2\ell} \delta^{\ell}}$. Let $B_j$,  $j=1,\ldots, N$ be balls in $\P^\ell$ of radius $2c_\ell \delta^{-1/2}$.  By taking $\gamma$ sufficiently small,  it is not difficult to construct using local coordinates $N$  such balls $B_j$ that are pairwise disjoint.

Denote by $a_j$ the center of $B_j$.  We claim that $\dist(a_j,V_0)\geq c_\ell \delta^{-1/2}$ for at least one $j=1, \ldots, N$, which will prove the lemma. We argue by contradiction.  If that's not the case,  then $V_0$ intersects every ball ${1\over 2}B_j$ of center $a_j$ and of radius $c_\ell \delta^{-1/2}$ for $j=1, \ldots, N$.  Together with Lelong's estimate and the fact that the balls $B_j$ are pairwise disjoint we get that $$\volume_{2\ell -2}(V_0) \geq \sum_{j=1}^N \volume_{2\ell -2}(V_0 \cap B_j)  \geq N  c_\ell' (c_\ell \delta^{-1/2})^{2(\ell-1)} \simeq c_\ell'\gamma c_\ell^{-2} \delta.$$
On the other hand, $\volume_{2\ell -2}(V_0) = \frac{1}{(\ell-1) !} \int_{V_0} \omegaFS^{\ell - 1} =  \frac{1}{(\ell-1) !} \delta$ by definition of degree and the fact that $\omegaFS^{\ell - 1}$ is cohomologous to a line in $\P^\ell$. It follows that $c_\ell'\gamma c_\ell^{-2}$ is bounded  by a constant.  By choosing $c_\ell$ sufficiently small we arrive at a contradiction.  This finishes the proof.
\end{proof}

\noindent \textbf{Central projections and standard boxes.} Let $z = (z_0, \ldots,z_k)$ be the standard euclidean coordinates on $\C^{k+1}$ and $[z] = [z_0: \cdots:z_k]$ be the induced homogeneous coordinates.  Fix $1 \leq \ell \leq k$ and write $z = (z',z'')$ with $z' = (z_0, \ldots,z_\ell) \in \C^{\ell +1}$ and $z'' = (z_{\ell +1}, \ldots,z_k) \in \C^{k - \ell}$.  Let $I_\ell$ be the projective subspace of $\P^k$ of codimension  $\ell +1$ defined by the equation $z'=0$. The {\it central projection} from $I_\ell$ is the map
$$\pi_0: \P^k \setminus I_\ell \longrightarrow \P^\ell, \quad [z] \mapsto [z'].$$
Observe that when  $\ell=k$ we have $I_0=\varnothing$ and $\pi_0:\P^k\to\P^k$ is simply the identity map. 

  Given $a \in \P^\ell$ we will denote by $\pi_a:\P^\ell \setminus \{a\}\to \P^{\ell-1}$ the central projection from $a$. Observe that the restriction of $\pi_a$ to a  hypersurface of $\P^\ell$ of degree $\delta$ not passing through $a$ is a finite holomorphic map of degree $\delta$.

\begin{lemma} \label{l:ramification}
Let $V_0$ be a hypersurface of degree $\delta$ in $\P^\ell$ and fix a point $a \in \P^\ell \setminus V_0$.  Let $\Sigma$ be the set of points $x\in\P^{\ell-1}$ such that $\pi_a^{-1}(x) \cap V_0$ contains less than $\delta$ points. Then $\Sigma$ is a hypersurface of $\P^{\ell-1}$ of degree at most $\delta^2$. 
\end{lemma}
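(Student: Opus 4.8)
The idea is to exhibit $\Sigma$ as the zero set of a single discriminant polynomial and to bound its degree using the weighted homogeneity of the discriminant.

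\smallskip

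First I would choose homogeneous coordinates $[z_0:\cdots:z_\ell]$ on $\P^\ell$ with $a=[0:\cdots:0:1]$; then $\pi_a([z_0:\cdots:z_\ell])=[z_0:\cdots:z_{\ell-1}]$, and writing $z'=(z_0,\dots,z_{\ell-1})$ the fiber $\pi_a^{-1}([z'])$ is the line $\{[z':w]:w\in\C\}\cup\{a\}$. Let $F$ be a squarefree homogeneous polynomial of degree $\delta$ with $V_0=\{F=0\}$. The hypothesis $a\notin V_0$ says $F(0,\dots,0,1)\neq0$, so the coefficient $c_0$ of $z_\ell^\delta$ in $F$ is a nonzero constant and
$$F(z',z_\ell)=\sum_{j=0}^\delta c_j(z')\,z_\ell^{\,\delta-j},\qquad c_0\in\C^*,\quad c_j \text{ homogeneous of degree } j.$$
Since $a\notin V_0$, for $x=[z']$ the set $\pi_a^{-1}(x)\cap V_0$ is exactly $\{[z':w]:F(z',w)=0\}$; as $F(z',\cdot)$ has degree exactly $\delta$ in $w$ (its leading coefficient is $c_0\neq0$), this set has $\delta$ elements unless $F(z',\cdot)$ has a multiple root, i.e. unless the discriminant $D(z'):=\mathrm{Disc}_{z_\ell}F(z')$ vanishes. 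Hence $\Sigma=\{[z']\in\P^{\ell-1}:D(z')=0\}$.

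\smallskip

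It then remains to see that $D$ is a nonzero homogeneous polynomial of degree at most $\delta^2$. Viewed as a polynomial in abstract coefficients $a_0,\dots,a_\delta$ of $\sum_j a_j w^{\delta-j}$, the discriminant is homogeneous of weight $\delta(\delta-1)$ once $a_j$ is given the weight $j$ (this follows from $\mathrm{Disc}=a_0^{2\delta-2}\prod_{i<j}(\rho_i-\rho_j)^2$ together with the effect of $w\mapsto tw$ on the roots $\rho_i$ and on the leading coefficient, or from the Sylvester matrix defining $\mathrm{Res}(F,\partial_{z_\ell}F)$). Substituting $a_j=c_j(z')$, which is homogeneous of degree $j$, shows that $D(z')$ is homogeneous of degree $\delta(\delta-1)\leq\delta^2$. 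Moreover $D\not\equiv0$: if $D$ vanished identically, then $F$ and $\partial F/\partial z_\ell$ would share a common factor in $\C(z')[z_\ell]$, hence, by Gauss's lemma, a common irreducible factor $P\in\C[z',z_\ell]$; since $c_0\in\C^*$, $F$ has no factor depending on $z'$ alone, so $\deg_{z_\ell}P\geq1$, and then writing $F=PQ$ one gets $P\mid(\partial_{z_\ell}P)Q$ with $P\nmid\partial_{z_\ell}P$, whence $P\mid Q$ and $P^2\mid F$, contradicting that $F$ is squarefree. Therefore $\Sigma=\{D=0\}$ is a hypersurface of $\P^{\ell-1}$ of degree at most $\deg D=\delta(\delta-1)\leq\delta^2$.

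\smallskip

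The only non-formal ingredient is the weighted homogeneity of the discriminant, which pins down the exponent $\delta(\delta-1)$ and hence the bound $\delta^2$; the identification of $\Sigma$ with a discriminant locus, and the non-vanishing of $D$ (where the reducedness of $V_0$ enters), are routine. I expect this weight computation to be the only point requiring care. One can alternatively bypass discriminants: in the chosen coordinates $\partial_{z_\ell}F$ cuts out the polar hypersurface $P_a$ of $V_0$ with respect to $a$, of degree $\delta-1$, and $\Sigma=\pi_a(V_0\cap P_a)$; reducedness of $V_0$ makes $V_0\cap P_a$ of pure dimension $\ell-2$, so Bézout bounds its degree, and hence that of its projection $\Sigma$, by $\delta(\delta-1)\leq\delta^2$.
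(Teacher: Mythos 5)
Your proof is correct, and it shares its starting point with the paper's: both identify $\Sigma$ as the vanishing locus of the discriminant of $F$ with respect to the variable $z_\ell$ along the fibers of $\pi_a$, with $a\notin V_0$ guaranteeing that the leading coefficient is a nonzero constant. The two arguments then diverge in how the degree bound is obtained. The paper works in the affine chart, writes the (anti)symmetrized product of root differences $\prod_{i<j}\bigl(a_i(w)-a_j(w)\bigr)$, uses $a\notin V_0$ to get $|a_j(w)|=O(\|w\|)$ at infinity, and concludes by Liouville that this function is a polynomial of degree at most $\delta^2$ — an analytic growth argument. You instead invoke the weighted homogeneity of the abstract discriminant (weight $\delta(\delta-1)$ when $a_j$ has weight $j$), which after substituting $a_j=c_j(z')$ immediately gives that $D$ is homogeneous of degree $\delta(\delta-1)\le\delta^2$; this is purely algebraic, yields the slightly sharper exponent $\delta(\delta-1)$, and also sidesteps the single-valuedness issue with the Vandermonde product (which is only defined up to sign under monodromy of the roots, whereas the discriminant is globally defined). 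You also explicitly verify $D\not\equiv 0$ from the reducedness of $V_0$ — a point the paper does not address but which is needed to know that $\Sigma$ is a proper hypersurface — and your alternative via the polar hypersurface and B\'ezout is a valid second route to the same bound. In short: same key object, but an algebraic degree count in place of the paper's growth-plus-Liouville argument, with a more careful treatment of non-degeneracy.
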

\begin{proof}
Choose a homogeneous coordinate system $z=[z_0:\cdots:z_\ell]$ of $\P^\ell$ such that $a=[0:\cdots:0:1]$. In the affine chart $\{z_0=1\}$, the projection $\pi_a$ is given by 
$\pi_a(w,z_\ell)=w$, where $w=(z_1,\ldots,z_{\ell-1})$. Denote by $a_1(w),\ldots,a_d(w)$  the last affine coordinates of the points in $\pi_a^{-1}(w)\cap V_0$ in the above affine chart,  where each point is repeated according to its multiplicity.
Since $a$ is outside $V_0$, we deduce that $|a_j(w)| = O(\|w\|)$ when $w$ goes to infinity. 
In $\C^{\ell-1}$, the set $\Sigma$ is exactly the zero set of the holomorphic function 
$$\prod_{i<j} (a_i(w)-a_j(w)).$$
This function has a polynomial growth $O(\|w\|^{\delta^2})$ when $w\to\infty$. By Liouville's theorem, it is a polynomial of degree at most $\delta^2$. The lemma follows. 
\end{proof}

We will use in $\P^k$ (resp. $\P^\ell$) the standard Fubini-Study metrics whose associated hermitian form on $\C^{k+1}$ (resp. $\C^{l+1}$) is given by $\ddc \log \|z\|$ (resp. $\ddc \log \|z'\|$). In the proofs below we will consider different coordinate systems on $\C^{k+1}$ related to the standard one by unitary transformations. These coordinate changes preserve the above Fubini-Study metrics. After a unitary change of coordinates mapping a given projective subspace $I$ of $\P^k$ to $I_\ell$,  one can define the central projection from $I$ using the above formula for $\pi_0$.

The lemma below allows us to reduce the proof of Proposition \ref{p:neigh-mes} to the case of hypersurfaces.

\begin{lemma} \label{lemma:codim1}
Let  $V$ be a proper subvariety of $\P^k$ of degree $\leq \delta$. Then $V$ is contained in a hypersurface of $\P^k$ of degree $\leq \delta$.
\end{lemma}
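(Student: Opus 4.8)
The plan is to produce the containing hypersurface by a dimension count on the space of polynomials of a fixed degree. Let $\ell = \dim V \leq k-1$. The key numerical fact is that for $\delta$ large enough, the dimension of the space of homogeneous polynomials of degree $\delta$ in $k+1$ variables, namely $\binom{k+\delta}{k}$, grows like $\delta^k/k!$, while the number of linear conditions needed to force such a polynomial to vanish on $V$ is controlled by the degree of $V$, which grows at most like a constant times $\delta^{\ell} \leq \delta^{k-1}$. So for $\delta$ sufficiently large there is a nonzero homogeneous polynomial of degree $\delta$ vanishing on $V$, giving a hypersurface of degree $\leq \delta$ containing $V$; for the finitely many small values of $\delta$ one handles the bound by hand (or simply absorbs them into the constant $\kappa$ later, since a proper subvariety is always contained in \emph{some} hypersurface).

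First I would make the "number of conditions" statement precise. The cleanest route is: pick finitely many points on $V$, one per irreducible component, in sufficiently general position, and in fact choose $N$ points $p_1, \dots, p_N$ on the smooth locus of $V$ with $N$ to be specified. Requiring a degree-$\delta$ form to vanish at $N$ prescribed points imposes at most $N$ linear conditions, so as long as $N < \binom{k+\delta}{k}$ there is a nonzero degree-$\delta$ form $P$ vanishing at all of them. The remaining point is to guarantee that $\{P=0\}$ actually contains $V$ and not merely these $N$ points. For this I would instead argue component by component: if $W$ is an irreducible component of $V$ of dimension $\ell_W$ and degree $\delta_W$, then by Bézout a hypersurface of degree $\delta$ not containing $W$ meets $W$ in a subvariety of degree $\leq \delta\cdot\delta_W$, hence in at most $\delta \cdot \delta_W$ points if $\ell_W = 0$, and more generally a generic linear section reduces to this. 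So it suffices to exhibit, on $W$, a set of points whose cardinality exceeds the maximal size of the intersection $W \cap \{P=0\}$ over all degree-$\delta$ hypersurfaces not containing $W$; forcing $P$ to vanish there then forces $W \subset \{P=0\}$.

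Concretely, I would take the following steps. (1) Reduce to $V$ irreducible, say of dimension $\ell$ and degree $\delta_0 \leq \delta$: if $V = \bigcup V_i$ with $\sum \deg V_i \leq \delta$, find $P_i$ of degree $\deg V_i$ (or $\leq \deg V_i$) vanishing on $V_i$ and take $P = \prod P_i$, of degree $\leq \delta$. (2) For irreducible $V$ of dimension $\ell$ and degree $\delta_0$, fix a generic codimension-$\ell$ linear subspace $L$ so that $V \cap L$ is $\delta_0$ reduced points $p_1,\dots,p_{\delta_0}$ spanning a copy of $\P^\ell$-worth of general position; more usefully, choose on $V$ a finite set $Z$ with $\#Z > \delta_0 \cdot \delta$ lying on a generic curve section $V \cap L'$ (a reduced curve of degree $\delta_0$), so that no degree-$\delta$ hypersurface missing $V$ can contain $Z$ — because restricted to the curve $V \cap L'$ it would have $\#Z > \delta_0\delta = \deg(V\cap L')\cdot \delta$ zeros, impossible. (3) Since $\#Z \leq \delta_0\delta + 1 \leq \delta^2 + 1 < \binom{k+\delta}{k}$ for $\delta$ large (as $k \geq 1$ forces the right side to grow at least quadratically once $k \geq 2$, and the $k=1$ case is trivial since $V$ is already a hypersurface), the linear system of degree-$\delta$ forms vanishing on $Z$ is nonempty; any nonzero member $P$ in it cannot miss $V$, so $V \subset \{P = 0\}$, a hypersurface of degree exactly $\delta$ (pad with a generic linear factor if $\deg P < \delta$). (4) Dispose of the finitely many small $\delta$ by noting any proper subvariety lies in some hypersurface, whose degree is then $\leq$ a constant depending only on $k$ and the finite list — this only inflates the eventual constant $\kappa$ in Proposition \ref{p:neigh-mes}.

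The main obstacle is step (2)–(3): making sure the prescribed vanishing set $Z \subset V$ is genuinely large enough to force containment of \emph{all} of $V$, while still small enough ($O(\delta^2)$, hence $\ll \binom{k+\delta}{k}$) to fit inside the available linear system. The Bézout bound on $(\text{hypersurface})\cap(\text{curve section of }V)$ is exactly the right tool, and the generic curve section exists by Bertini; the one routine check I am glossing is that a generic linear section of an irreducible $V$ of dimension $\ell \geq 1$ is an irreducible (or at least reduced) curve of the same degree $\delta_0$, which is standard. Everything else is elementary linear algebra on spaces of homogeneous polynomials.
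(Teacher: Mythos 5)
Your dimension-count strategy has a genuine gap at its key step (2)--(3), namely the claim that vanishing on a large finite set $Z$ lying on a \emph{single} generic curve section $C=V\cap L'$ forces a degree-$\delta$ form $P$ to contain all of $V$. What B\'ezout on the curve actually gives is a dichotomy: either $\{P=0\}\not\supset C$, in which case $\#(\{P=0\}\cap C)\leq \delta\cdot\deg C$ and $P$ cannot vanish on $Z$, or $\{P=0\}\supset C$. The second alternative is not excluded, and it does not imply $V\subset\{P=0\}$ when $\dim V\geq 2$: for instance, if $H$ is a hyperplane containing the linear space $L'$, any $P$ divisible by the linear form of $H$ vanishes on $C$ without containing $V$. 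So "vanishing on $Z$ $\Rightarrow$ containing $V$" fails exactly in the case your argument is designed for (irreducible components of dimension $\geq 2$; for curves $C=V$ and for points the step is fine). To repair the approach you would have to bound the number of independent linear conditions that "$P|_V\equiv 0$" imposes, i.e.\ bound the dimension of the image of the restriction map $H^0(\P^k,\mathcal O(\delta))\to H^0(V,\mathcal O_V(\delta))$ by something smaller than $\binom{k+\delta}{k}$ (a Hilbert-function estimate, e.g.\ via induction on dimension by hyperplane sections); that is a genuinely heavier ingredient than B\'ezout on one curve section, and it is the piece your write-up is missing.

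Two secondary points. First, the numerical justification is off: for $k=2$ one has $\binom{\delta+2}{2}\sim\delta^2/2$, so "$\delta^2+1<\binom{k+\delta}{k}$ for $\delta$ large" is false; this happens to be harmless only because in $\P^2$ every positive-dimensional subvariety is already a hypersurface, but the count should be organized per irreducible component (seeking degree $\deg V_i$ for each $V_i$, as in your step (1)) rather than with the global $\delta$. Second, restricting to "$\delta$ large" and absorbing small $\delta$ into the constant $\kappa$ proves a weaker statement than the lemma, which asserts degree $\leq\delta$ for every $\delta$. By contrast, the paper's proof is a two-line projection argument with no dimension count and no largeness assumption: choose a generic linear subspace $I$ of dimension $k-s-2$ disjoint from $V$ (where $s=\dim V$) and project centrally $\pi:\P^k\setminus I\to\P^{s+1}$; then $\pi(V)$ is a hypersurface of degree $\leq\delta$ in $\P^{s+1}$, and the cone $\overline{\pi^{-1}(\pi(V))}$ is a hypersurface of degree $\leq\delta$ in $\P^k$ containing $V$. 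If you want to keep a linear-algebra proof, supply the Hilbert-function bound; otherwise the projection argument is both shorter and sharper.
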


\begin{proof}
Let $s$ be the dimension of $V$ and assume that $s<k-1$. Let $I$ be a generic projective subspace of dimension $k-s-2$ in $\P^k\setminus V$ and let $\pi:\P^k\setminus I\to \P^{s+1}$ be the central projection from $I$. Then $\pi(V)$ is a hypersurface of degree at most $\delta$ in $\P^{s + 1}$. Therefore,   $\overline{\pi^{-1}(\pi(V))}$ is a hypersurface of degree at most $\delta$ containing $V$.
\end{proof}

The following notions will be used in the proof of Proposition \ref{p:neigh-mes}.

\begin{definition} \label{def:type} \rm
Let $1\leq \ell \leq k - 1$. A hypersurface $V$ of $\P^k$ is said to be {\it of type $\ell$} if there exists a central projection $\pi: \P^k \setminus I \to \P^\ell$ from a projective subspace $I$ of codimension $\ell + 1$ and a hypersurface  $V_0 \subset \P^\ell$ such that $V = \overline{\pi^{-1}(V_0)}$.  When $\ell = k$,  we take $I =\varnothing$,  $V_0=V$ and $\pi:\P^k\to\P^k$ to be the identity map. 
\end{definition}

\begin{definition} \rm
 Fix integers $1 \leq \ell \leq k$ and let $r, R_1,R_2$ be positive constants. 
 \begin{itemize} \setlength{\itemindent}{-1em}
 \item A {\it standard box} $\fB$ of size $r$ in $\P^{\ell-1}$ is the polydisc $(r\D)^{\ell-1}$ in the affine chart $\{z_0=1\}$ of $\P^{\ell-1}$ endowed with coordinates $[z_0:\cdots:z_{\ell-1}]$.
 \item A {\it standard box} $\fB'$ of size $(r,R_1)$ in $\P^\ell$ is the polydisc  $(r\D)^{\ell-1}\times (R_1\D)$ in the affine chart $\{z_0=1\}$ of $\P^\ell$ endowed with coordinates $[z_0:\cdots:z_{\ell}]$.
  \item A {\it standard box} $\fB''$ of size $(r,R_1,R_2)$ in $\P^k$ is the polydisc $(r\D)^{\ell-1}\times (R_1\D)\times (R_2\D)^{k-\ell}$ in the affine chart $\{z_0=1\}$ of $\P^k$ endowed with coordinates $[z_0:\cdots:z_{k}]$.
 \end{itemize}

For $\lambda>0$ and $\fB$  a standard box of size $r$ in $\P^{\ell-1}$, we will denote by $\lambda\fB$ the standard box of size $\lambda r$ in $\P^{\ell-1}$ associated to the same coordinates. Similarly for the boxes in $\P^\ell$ and $\P^k$.

Given a standard box $\fB''$ of size $(r,R_1,R_2)$ in $\P^k$  and $r_0 >0$,  the map $$L(z_1,\ldots,z_k)=(r_0 r^{-1} z_1,\ldots, r_0 r^{-1} z_{\ell-1},  r_0 R_1^{-1} z_\ell,  r_0 R_2^{-1} z_{\ell+1}, \ldots,  r_0 R_2^{-1} z_k)$$  defined in  the affine chart $\{z_0=1\} \simeq \C^k$  sends $\fB''$ bijectively to $r_0 \D^k$.  We will call the map $L$ the {\it standard linear isomorphism} between $\fB''$ and $r_0 \D^k$.
\end{definition}

Note that  when we change the coordinates $z$, the standard boxes change accordingly. The following lemma follows from straightforward computations using the above definition for a fixed coordinate system. We leave the details to the reader.  We use here the two central projections $\pi_0: \P^k \setminus I_\ell \to \P^\ell$ and $\pi_a:\P^\ell \setminus \{a\}\to \P^{\ell-1}$ introduced above with $a=[0:\cdots:0:1] \in \P^\ell$. 

\begin{lemma} \label{l:box}
Let $0<r<1$ and let $\fB$ be a standard box of size $r$ in $\P^{\ell-1}$.  Denote by $\fB'$ the standard box of size $(r, 1/r)$ in $\P^\ell$ and 
by $\fB''$ the standard box of size $(r, 1/r, 1/r^2)$ in $\P^k$.   
There are constants $A_0 \geq 1$ and $A_1 \geq 1$ independent of $r$ satisfying the following properties. 
Let $U_a$ be the $A_0r$-neighborhood of $a$ in $\P^\ell$ and $U_{I_\ell}$ be the $A_0r$-neighbourhood of $I_\ell$ in $\P^k$.  
Then
\begin{enumerate}
\item The standard box of size $(10r,1/r)$ in $\P^\ell$ contains the set $\pi_a^{-1}(10\fB)\setminus U_a$;
\item The  standard box  of size $(10r,10/r,1/r^2)$ in $\P^k$ contains the set $\pi_0^{-1}(10\fB')\setminus U_{I_\ell}$;
\item Let  $L:10 \fB'' \to 10 \D^k$ be the standard linear isomorphism.  Then the differential $dL$ satisfies $\|dL\|\leq A_1/r^2$ on $10 \fB''$ and $\|(dL)^{-1}\|\leq A_1/r^2$ on $10 \D^k$.  Here we use the Fubini-Study metric on $10 \fB''$ and the Euclidean metric on $10 \D^k$. 
 \end{enumerate}
\end{lemma}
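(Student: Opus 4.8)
The three assertions are essentially independent, and each reduces to a direct computation in the standard affine chart $\{z_0=1\}$; the plan is to treat them in turn.

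\emph{Parts (1) and (2).} In the chart $\{z_0=1\}$ both central projections are coordinate projections, $\pi_a(z_1,\dots,z_\ell)=(z_1,\dots,z_{\ell-1})$ and $\pi_0(z_1,\dots,z_k)=(z_1,\dots,z_\ell)$, and one first checks that $\pi_a^{-1}(10\fB)$ and $\pi_0^{-1}(10\fB')$ lie entirely inside this chart — their fibres are the complementary coordinate subspaces, and the only points of those fibres lying outside the chart are $a$, resp.\ points of $I_\ell$, which are excluded from the domains of the projections. Hence, in the chart, $\pi_a^{-1}(10\fB)=\{|z_i|<10r:\ i\le\ell-1\}$ with $z_\ell$ unconstrained, and $\pi_0^{-1}(10\fB')=\{|z_i|<10r:\ i\le\ell-1,\ |z_\ell|<10/r\}$ with $z''=(z_{\ell+1},\dots,z_k)$ unconstrained. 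To install the missing bounds after removing $U_a$, resp.\ $U_{I_\ell}$, I would compute with the Fubini--Study angle: with the obvious representatives,
$$\sin\dist\big([1:z_1:\cdots:z_\ell],\,a\big)=\frac{\|(1,z_1,\dots,z_{\ell-1})\|}{\|(1,z_1,\dots,z_\ell)\|},\qquad \sin\dist\big([1:z_1:\cdots:z_k],\,I_\ell\big)=\frac{\|(1,z_1,\dots,z_\ell)\|}{\|(1,z_1,\dots,z_k)\|}.$$
On $\pi_a^{-1}(10\fB)$ the numerator of the first ratio is bounded, $\|(1,z_1,\dots,z_{\ell-1})\|^2\le 1+(\ell-1)(10r)^2\le c_\ell$, since $0<r<1$; so $\dist(\cdot,a)\ge A_0r$ forces $|z_\ell|^2\le c'_\ell A_0^{-2}r^{-2}$, whence $|z_\ell|<1/r$ once $A_0$ is large in terms of $\ell$. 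On $\pi_0^{-1}(10\fB')$ one instead has $\|(1,z_1,\dots,z_\ell)\|^2\le 1+(\ell-1)(10r)^2+(10/r)^2\le c_\ell r^{-2}$, so $\dist(\cdot,I_\ell)\ge A_0r$ forces $\|z''\|^2\le c'_\ell A_0^{-2}r^{-4}$, whence $\|z''\|_\infty<1/r^2$ for $A_0$ large. Taking $A_0\ge1$ to majorize the finitely many constants so produced proves (1) and (2).

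\emph{Part (3).} Unwinding the definition, $L$ is linear and diagonal in the chart: it multiplies $z_1,\dots,z_{\ell-1}$ by $r^{-1}$, multiplies $z_\ell$ by $r$, and multiplies $z_{\ell+1},\dots,z_k$ by $r^2$, so $dL=D$ is the constant diagonal matrix with these entries; for $0<r<1$ one has $\|D\|_{\mathrm{op}}=r^{-1}$ and $\|D^{-1}\|_{\mathrm{op}}=r^{-2}$. The bound on $(dL)^{-1}$ is immediate from the fact that on any affine chart the Fubini--Study norm is dominated by the Euclidean one, since $\|v\|_{\mathrm{FS}}^2=\frac{\|v\|^2}{1+\|z\|^2}-\frac{|\langle v,z\rangle|^2}{(1+\|z\|^2)^2}\le\|v\|^2$; thus $\|(dL)^{-1}u\|_{\mathrm{FS}}\le\|D^{-1}u\|\le r^{-2}\|u\|$ on $10\D^k$. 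For $\|dL\|$ one must compare $\|Dv\|$ with $\|v\|_{\mathrm{FS}}$ at a point $z\in10\fB''$; writing the inverse metric explicitly ($g_z^{-1}=(1+\|z\|^2)(I+\bar z\,z^{T})$ by the Sherman--Morrison identity) and computing operator norms gives
$$\|dL_z\|^2\le(1+\|z\|^2)\big(\|D\|_{\mathrm{op}}^2+\|Dz\|^2\big).$$
Here the sizes $r,1/r,1/r^2$ of $\fB''$ are chosen precisely so that on $10\fB''$ the product of each diagonal entry of $D$ with the corresponding coordinate bound is $O(1)$ — namely $r^{-1}\cdot 10r=r\cdot 10r^{-1}=r^2\cdot 10r^{-2}=10$ — so that $\|Dz\|^2\lesssim_k1$ there; fed into the displayed inequality together with the elementary bound for $1+\|z\|^2$ on $10\fB''$, this yields the stated bound on $\|dL\|$. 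Morally, the Euclidean stretching of $L$ in the ``fat'' coordinate directions is cancelled by the contraction of the Fubini--Study metric near $I_\ell$.

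\emph{Where the difficulty is.} Parts (1) and (2) are bookkeeping with the Fubini--Study angle once $A_0$ is taken large. The one substantive step is the $\|dL\|$ estimate in (3): a crude ``$\mathrm{FS}\asymp\mathrm{Euclidean}$'' comparison is not available, because the Fubini--Study metric degenerates relative to the Euclidean one as one approaches the hyperplane at infinity of the chart — equivalently near $I_\ell$, and near $a$ in (1) — so the argument has to use that the anisotropy of the box (thin of size $r$ along the $z'$-directions, fat of sizes $1/r$, $1/r^2$ along the $z_\ell$- and $z''$-directions) is matched to the contracting/expanding eigendirections of $L$. Keeping track of that matching through the displayed inequality is the heart of the proof.
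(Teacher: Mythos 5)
Parts (1), (2) and the $\|(dL)^{-1}\|$ half of (3) are fine: the chart computations, the formulas for $\sin\dist(\cdot,a)$ and $\sin\dist(\cdot,I_\ell)$, and the domination of the Fubini--Study norm by the Euclidean norm in an affine chart are all correct, and they are essentially the ``straightforward computations'' the paper leaves to the reader. The problem is the last step of your estimate for $\|dL\|$. Your displayed inequality $\|dL_z\|^2\le(1+\|z\|^2)\big(\|D\|_{\mathrm{op}}^2+\|Dz\|^2\big)$ is correct, and so is $\|Dz\|^2\lesssim_k 1$ on $10\fB''$, but the ``elementary bound for $1+\|z\|^2$ on $10\fB''$'' is only $1+\|z\|^2\lesssim r^{-4}$ (the fat coordinates range up to $10/r^2$), while $\|D\|_{\mathrm{op}}^2=r^{-2}$ (the thin directions are stretched by $r^{-1}$). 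Feeding these in gives $\|dL_z\|^2\lesssim r^{-6}$, i.e.\ $\|dL\|\lesssim r^{-3}$, not the claimed $A_1/r^2$. This is not just a lossy step in your argument: for $2\le\ell\le k-1$ the uniform bound $A_1/r^2$ on all of $10\fB''$ actually fails. Take $z\in 10\fB''$ with a single fat coordinate $|z_j|$ close to $10/r^2$ ($j>\ell$) and all other coordinates $0$, and $v=e_1$ a thin direction; then $\langle v,z\rangle=0$, so $\|v\|_{\mathrm{FS}}\asymp \|z\|^{-1}\asymp r^2$, while $\|Dv\|=r^{-1}$, so $\|dL_z\|\gtrsim r^{-3}$. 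Your closing heuristic (``the stretching in the fat directions is cancelled by the FS contraction near $I_\ell$'') misses exactly this configuration: near the hyperplane at infinity of the chart the FS metric contracts \emph{all} directions, including the thin ones that $L$ expands by $r^{-1}$, and that is where the exponent degrades from $2$ to $3$.

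So as written your proof of (3) has a genuine gap, and it cannot be closed along the same lines without changing what is proved: either restrict the bound $\|dL\|\le A_1/r^2$ to the region where $\|z\|\lesssim 1/r$ (e.g.\ $\|z''\|=O(1/r)$), where your inequality does give it, or prove the uniform bound $\|dL\|\le A_1/r^3$ on $10\fB''$, which your displayed inequality yields immediately and which is what the worst-case geometry allows. It is worth noting that the weaker exponent is harmless for the only use of the lemma, in the proof of Proposition \ref{p:neigh-mes}: there the bound on $\|dL\|$ only enters through the radius $t=C\delta^{-\kappa+2\kappa_1}$ (which would become $C\delta^{-\kappa+3\kappa_1}$) and the final exponent is rescued anyway by taking $\kappa$ large. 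But as a proof of Lemma \ref{l:box}(3) exactly as stated, the argument does not go through, and you should flag the discrepancy rather than assert the $r^{-2}$ bound.
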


We can now  prove  the proposition.

\begin{proof}[Proof of Proposition \ref{p:neigh-mes}]
By Lemma \ref{lemma:codim1} we can assume that $V$ is a hypersurface of $\P^k$ of degree $ \leq \delta$.  One can assume that $\delta \geq 2$ since the case $\delta <2$ is an immediate consequence of the case $\delta =2$.   Let $1 \leq \ell \leq k$ be the type of $V$ as in Definition \ref{def:type}.  We shall proceed by induction on $\ell$.

When $\ell = 1$ we have that  $V = \overline{\pi^{-1}(V_0)}$ where  $\pi: \P^k \setminus I \to \P^1$ is the central projection from a projective subspace $I$ of codimension $2$ and $V_0$ is a set of at most $\delta$ points in $\P^1$.  Hence $V$ is a union of at most $\delta$ hyperplanes in $\P^k$.  In this case the result follows directly from Lemma \ref{l:H-neigh}.

Consider now $2 \leq \ell \leq k$ and assume that the result has been proven for hypersurfaces of type $\ell-1$.  Let $V$ be a hypersurface of type $\ell$ and of degree $\leq\delta$. For simplicity, adding some hyperplanes to $V$ allows us to assume that $\deg(V)=\delta$. By definition, $V = \overline{\pi^{-1}(V_0)}$ for a   hypersurface  $V_0 \subset \P^\ell$ of degree $\delta$ and  a central projection $\pi: \P^k \setminus I \to \P^\ell$.  After a unitary change of coordinates we can assume that $I = I_\ell$ and $\pi = \pi_0$ is the canonical  projection defined above.

By Lemma \ref{l:exist-I}, we can find a point $a\in \P^\ell$ such that $\dist(a_j,V_0)\geq c_\ell \delta^{-1/2}$.  Consider the central projection $\pi_a:\P^\ell\setminus\{a\}\to \P^{\ell-1}$ and let  $\Sigma$ be as in Lemma \ref{l:ramification}.  By that lemma, this is a hypersurface of degree at most $\delta^2$ in $\P^{\ell-1}$.  Fix two large constants $\kappa_1 \gg \kappa_0 \gg 1$ and set
  \noindent
\begin{minipage}[t]{0.52\textwidth}  
  \begin{itemize}
    \item $U_\Sigma := \Tub(\overline{\pi_0^{-1}(\pi_a^{-1}(\Sigma))}; \delta^{-\kappa_0})$ in $\P^k$,
    \item $W := \Tub(\Sigma; 100k \delta ^{-\kappa_1})$ in $\P^{\ell-1}$,
  \end{itemize}
\end{minipage}
\begin{minipage}[t]{0.45\textwidth}
  \begin{itemize}
    \item $U_a := \Tub(a; \delta^{-\kappa_0})$ in $\P^\ell$,  
    \item  $U_I:= \Tub(I; \delta^{-\kappa_0})$ in $\P^k$.
  \end{itemize}
\end{minipage}

Using that $\kappa_0\ll\kappa_1$,  it is not difficult to see that $\pi_0^{-1}(\pi_a^{-1}(W))\subset U_\Sigma$.  Notice that $\overline{\pi_0^{-1}(a)}$ is a projective subspace of $\P^k$.  
By induction hypothesis applied to $\pi_0^{-1}(\pi_a^{-1}(\Sigma))$ and $2\delta^2$ instead of $\delta$,  we have $\nu(U_\Sigma)\leq   {1\over 2}\delta^{-2}$ because $\overline{\pi_0^{-1}(\pi_a^{-1}(\Sigma))}$ is of type $\ell-1$.  Therefore, it remains to show that 
\begin{equation} \label{eq:measure-tube}
\nu\big( \Tub(V;\delta^{-\kappa}) \setminus U_\Sigma \big)\leq {1\over 2} \delta^{-1}
\end{equation}
 provided that $\kappa$ is large enough.

Cover $\P^{\ell-1}\setminus W$ by $N$ standard boxes $\fB_j$ of size $\delta^{-\kappa_1}$ whose centers are outside $W$.  These boxes may correspond to different coordinate systems.  The number $N$ of boxes can be chosen to be a large power of $\delta$.   In particular,  it grows polynomially with $\delta$. Denote by $\fB_j'$ the box of size $(\delta^{-\kappa_1}, \delta^{\kappa_1})$ in $\P^\ell$ and $\fB_j''$ the box of size $(\delta^{-\kappa_1}, \delta^{\kappa_1}, \delta^{2\kappa_1})$ in $\P^k$ associated to $\fB_j$.  For each  $j$, we use an adapted coordinate system as above.  Using Lemma \ref{l:box}  we can ensure the following

\begin{enumerate}
\item The box of size $(10 \delta^{-\kappa_1}, \delta^{\kappa_1})$ in $\P^\ell$ defined in the same affine chart as  $\fB_j'$ contains $\pi_a^{-1}(10 \fB_j)\setminus U_a$; in particular, it contains $V_0\cap \pi_a^{-1}(10 \fB_j)$;
\item The box of size $(10 \delta^{-\kappa_1}, 10 \delta^{\kappa_1}, \delta^{2\kappa_1})$ in $\P^k$  defined in the same affine chart as  $\fB_j''$ contains $\pi_0^{-1}(10 \fB_j')\setminus U_I$,  hence it contains $\pi_0^{-1}(10 \fB_j')\setminus U_\Sigma$.
\end{enumerate}

\noindent  \underline{\emph{Claim}}: For every $j=1,\ldots,N$,   $V_0\cap \pi_a^{-1}(10\fB_j)$ is a union of $\delta$ disjoint connected components $\Lambda_{j,p}$,  $1\leq p \leq \delta$.  Moreover,  each $\Lambda_{j,p}$ is  the graph of a holomorphic function over  $10\fB_j$ whose modulus is bounded by $\delta^{\kappa_1}$. 

\smallskip

\proof[\underline{\emph{Proof of claim}}] By construction,  each box $10 \fB_j$ is disjoint from $\Sigma$.  In particular, $\pi_a|_{V_0}$ is unramified over $10 \fB_j$,  so that  $V_0\cap \pi_a^{-1}(10\fB_j)$ is a union of $\delta$ disjoint connected components $\Lambda_{j,p}$,  $1\leq p\leq \delta$.  By our choice of $a$ and  $\kappa_1 \gg \kappa_0 \gg 1$,  we see that $U_a$ is disjoint from $V_0$.  We deduce from Property (1) above that $\Lambda_{j,p}$ is contained in the standard box of size $(10 \delta^{-\kappa_1}, \delta^{\kappa_1})$ in $\P^\ell$ in the same affine chart as  $\fB_j'$  and $\pi_a:\Lambda_{j,p}\to 10\fB_j$ is bijective.  The claim follows.  
\endproof

Since the boxes $\fB_j$  cover  $\P^{\ell-1}\setminus W$,  the above graphs $\Lambda_{j,p}$ restricted to $\fB_j'$ cover $V_0\setminus \pi_a^{-1}(W)$.  Now set  $\Gamma_{j,p}:=\pi_0^{-1}(\Gamma_{j,p})\cap (10\fB_j'')$. Then  $\Gamma_{j,p}$ can be seen as the graph of a holomorphic function over $(10\fB_j)\times (10 \delta^{2\kappa_1}\D)^{k-\ell}$.  
More precisely,  if we work on the chart $\{z_0=1\}$ as above, then $\Gamma_{j,p}$ is a graph with respect to the projection $(z_1,\ldots,z_k)\mapsto (z_1,\ldots,z_{\ell-1},z_{\ell+1},\ldots,z_k)$.  Moreover,  these graphs are contained  in the set $\{|z_\ell|\leq \delta^{\kappa_1}\}$.  

From Property (2) above,  the sets $\Gamma_{j,p}\cap \fB_j''$ cover $V\setminus U_\Sigma$.  Therefore,  the $\delta^{-\kappa}$-neighbourhoods of $\Gamma_{j,p}\cap \fB_j''$ cover the $\delta^{-\kappa}$-neighbourhood of $V\setminus U_\Sigma$.  Since the total number of graphs is polynomial in $\delta$,  the desired estimate \eqref{eq:measure-tube} will follow once we show that,  given $\kappa_2 >0$ large enough,  there exists $\kappa \geq 1$ such that
\begin{equation} \label{eq:measure-tube2}
\nu(\Tub(\Gamma_{j,p}\cap \fB_j''; \delta^{-\kappa})) \leq  C  \delta^{-\kappa_2} \quad \text{ for all } 1 \leq j \leq N,  \,  1\leq p \leq \delta 
\end{equation}
for some constant $C >0$. 

In order to show \eqref{eq:measure-tube2} we will combine Lemmas  \ref{l:Gamma-neigh} and  \ref{l:box}.  To simplify the notation we fix $j$ and $p$ and  denote $\fB:=\fB_{j}$, $\fB':=\fB_{j}'$, $\fB'':=\fB_{j}''$ and $\Gamma:=\Gamma_{j,p}$.  Denote also by $\Upsilon(\delta^{-\kappa}) := \Tub(\Gamma \cap \fB''; \delta^{-\kappa})$.  Let $L$ be the standard linear isomorphism between $10\fB''$ and $10 \D^k$.  Define 
$$\widetilde\Gamma:=L(\Gamma),  \,\,   \widetilde \Upsilon(\delta^{-\kappa}) :=L(\Upsilon(\delta^{-\kappa}) ) \,\,  \text{ and } \,\,   \widetilde\nu:=L_*(\nu).$$  Since $\nu(\Upsilon(\delta^{-\kappa}) )=\widetilde\nu(\widetilde{\Upsilon}(\delta^{-\kappa}))$, we have to show that given $\kappa_2 >0$,  there exists $\kappa \geq 1$ such that
\begin{equation} \label{eq:measure-tube3}
\widetilde\nu(\widetilde{\Upsilon}(\delta^{-\kappa}))\leq C \delta^{-\kappa_2}.
 \end{equation}
 
 Recall that $\fB''$ is of size $(\delta^{-\kappa_1}, \delta^{\kappa_1}, \delta^{2\kappa_1})$, so $\|dL\|\leq C_1 \delta^{2\kappa_1}$ by Lemma \ref{l:box}.  Using the notation of Lemma \ref{l:Gamma-neigh},  we deduce that $\widetilde{\Upsilon}(\delta^{-\kappa})$ is contained in $W_t$ for $t = C_2 \delta^{-\kappa+2\kappa_1}$ for some $C_2 > 0$.  For simplicity, we set $\Upsilon^*(\delta^{-\kappa+2\kappa_1}) := W_t$ for $t$ as above.   
  
  Since  $\widetilde \Gamma$ is the image of $\Gamma$ under $L$,  it follows from the above discussion that $\widetilde \Gamma \subset 10 \D^k$ is the graph of a holomorphic function $h$ over $10 \D^{k-1}$ with values in $\overline\D$. In particular,  we can apply Lemma \ref{l:Gamma-neigh}.  Let $v$ be a local potential of $\nu$ on $\fB''$ and $\widetilde v = v \circ L^{-1}$ be the corresponding local potential of $\widetilde \nu$ on $10 \D^k$.  Since by assumption the global potential of $\nu$ is $\alpha$-H\"older continuous for some $0< \alpha \leq 1$,  $v$ is $(A,\alpha)$-H\"older continuous for some $A >0$.  Because $\|dL^{-1}\|\leq C_1 \delta^{2\kappa_1}$, we have that $\widetilde v$ is $(A' \delta^{2\kappa_1}, \alpha)$-H\"older continuous for some $A' >0$.  Finally,  we set $\widehat v =   \delta^{-2 \kappa_1} \widetilde v$ so that $\widehat v$ is $(A',\alpha)$-H\"older continuous.  Recall that $\widetilde{\Upsilon}(\delta^{-\kappa}) \subset W_t = \Upsilon^*(\delta^{-\kappa+2\kappa_1})$.  By Lemma \ref{l:Gamma-neigh} there exist constants $A''>0$ and $\alpha' >0$ such that
 $$ (\ddc \widehat v )^k (\Upsilon^*(\delta^{-\kappa+2\kappa_1})) \leq A'' \delta^{-\alpha' \kappa + 2\alpha' \kappa_1}.$$
  
 Now,  $\widetilde \nu = (\ddc \widetilde v )^k = \delta^{2k \kappa_1} (\ddc \widehat v)^k$,  so we conclude that
 $$\widetilde\nu(\widetilde{\Upsilon}(\delta^{-\kappa}))\leq  \widetilde\nu(\Upsilon^*(\delta^{-\kappa+2\kappa_1})) \leq  A''\delta^{-\alpha' \kappa + 2\alpha' \kappa_1 + 2k \kappa_1}. $$
By taking $\kappa$ large enough yields \eqref{eq:measure-tube3} and finishes the proof.
\end{proof}

\section{Local coordinates, Manhattans and inverse branches} \label{s:branch}
In this section,  we build a good covering of $\P^k$ over which we will later construct the inverse branches of $f^n$.

Denote by $\W$ the open square $(-1,1)^2$ of side $2$ in $\C\simeq\R^2$ and,  for $r>0$, denote by $r\W$ the image of $\W$ under the linear map $z\mapsto rz$. We cover $\P^k$ by a finite number of charts $(\Omega'_j,\pi_j)$, $1\leq j\leq M$, such that 
\begin{enumerate}\setlength{\itemindent}{-7pt}
\item $\Omega'_j$ is an open set of $\P^k$ and $\pi_j:\Omega'_j\to 100 \W^k$ is a biholomorphic map. For simplicity,  we can take $\pi_j$ to be the restriction of some standard affine chart to $\Omega'_j$; 
\item There exists a constant $A_2 > 0$ such that $\|d\pi_j\|\leq A_2$ and $\|(d\pi_j)^{-1}\|\leq A_2$ for every $1\leq j\leq M$,  where the norms are with respect to the Fubini-Study metric on $\P^k$ and the Euclidean metric on $\C^k$;
\item Let $\Omega_j:=\pi_j^{-1}(\W^k)$  and ${1\over 10} \Omega_j:=\pi_j^{-1}({1\over 10}\W^k)$. Then,  the sets ${1\over 10} \Omega_j$ cover $\P^k$.
\end{enumerate}

From now on, we fix this atlas.  A ball will mean a ball with respect to these charts and they will often  be denoted by $\B$.  Since the number of charts is finite,  the choice of chart won't affect our estimates. Condition (2) above shows that the Euclidean metrics on charts are comparable with the Fubini-Study metric.  In particular,  they are comparable on the overlaps of the charts.

We introduce now the notion of Manhattans, cells and street networks.  These cells will be the open sets over which we will consider our inverse branches.  Their special geometry will allow us to provide good estimates.  We first define them over $\C^k$.  The definition is transferable  to $\P^k$ via the above atlas.

\begin{definition}[Manhattans,  cells and  street networks]  \rm
Identify $\C^k$ with $\R^{2k}$ in the standard way.  Fix a constant  $0<r<1/100$ and a vector $\tau$ in $\C^k\simeq\R^{2k}$.  A \textit{cell} is an open cube of the form
$$\W_{r,\eta}^k:=r\W^k + \tau + 2r \eta \quad \text{with} \quad \eta\in\Z^{2k}.$$ 
In order to simplify the notation we do not include $\tau$ in the index. Notice that the above cells are disjoint.   Moreover,  the closed cubes $\overline\W_{r,\eta}^k$ cover $\C^k$ and two of them can only overlap along a common face of lower dimension.

A  \textit{street network} is the complement of the union of the cubes 
$$(1-r)\W_{r,\eta}^k:= (1-r) r\W^k + \tau + 2r \eta,$$
where $\eta$ runs through $\Z^{2k}$.  An  \textit{extended street network},  denoted by  $q\bbS_r$ for $q=2,3,4$  is the complement of the union of the cubes
$$(1-qr)\W_{r,\eta}^k:= (1-qr) r\W^k + \tau + 2r \eta,$$
where $\eta$ runs through $\Z^{2k}$.  By {\it center} of $\W_{r,\eta}^k$ and $(1-qr)\W_{r,\eta}^k$ we mean the point $\tau + 2r \eta$.

A \textit{Manhattan} of parameters $r,\tau$ is the data of an open subset $\Omega$ of $\C^k$ (or a chart in $\P^k$) together with the cells inside $\Omega$ and the (extended) street networks restricted to $\Omega$.
\end{definition}

We will only use Manhattans for the charts $\Omega_j:=\pi_j^{-1}(\W^k)$ of $\P^k$ described at the beginning of the section with small parameters $r$,  where we use $\pi_j$ to identify $\Omega_j$ with an open cube of $\C^k$.  We will denote by $\W_{r,\eta}^{k,j}$, $\bbS_r^j$ and $q\bbS_r^j$ the corresponding cells and (extended) street networks.  It is not difficult to see that the closed cells inside $\Omega_j$ cover ${1\over 2}\Omega_j$ when $r<1/100$.

A key point of our proof is to construct good inverse branches for $f^n$ on the cells of a suitable Manhattan.    The main obstruction to obtain these inverse branches is the postcritical set of $f$.   Denote by $\rmC_f$ the critical set of $f$ and define {\it the postcritical set} $\PC_m$ of order $m$ of $f$ by
$$\PC_0=\varnothing, \quad \PC_m:=f(\rmC_f)\cup f^2(\rmC_f)\cup \cdots \cup f^m(\rmC_f) \quad \text{and}\quad \PC_\infty:=\cup_{m\geq 1}f^m(\rmC_f).$$
Here, we drop the index $f$ from $\PC$ for simplicity.  The set $\PC_m$ is a hypersurface in $\P^k$ whose degree is  
bounded by a constant times $d^{(k-1)m}$.

\begin{lemma} \label{l:Manhattan}
Let $0<r<1/100$ be any constant and $1\leq j\leq M$.  
Then there is a vector $\tau \in \C^k$, which may depend on $j$, such that $\mu(4\bbS_r^j\cap\Omega_j)\leq 30kr$.
Moreover, the centers of the cells of the corresponding  Manhattans of parameters $r,\tau$  are all outside $\PC_\infty$.
We say that such a Manhattan is good. 
\end{lemma}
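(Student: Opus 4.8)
The plan is to fix the chart index $j$ and carry out everything inside the chart $\pi_j\colon\Omega'_j\to 100\W^k\subset\C^k\simeq\R^{2k}$, so that $\Omega_j$, the cells, and the extended street network $4\bbS_r^j$ become their Euclidean counterparts in $\C^k$ (depending on the free parameter $\tau$), and $\mu$ is replaced by the finite measure $\nu:=(\pi_j)_*(\mu|_{\Omega'_j})$ of mass $\le 1$ supported in $100\W^k$. It then suffices to exhibit one $\tau$ for which simultaneously \emph{(a)} $\nu(4\bbS_r\cap\W^k)\le 30kr$ and \emph{(b)} no lattice point $\tau+2r\eta$, $\eta\in\Z^{2k}$, lies in $Z:=\pi_j(\PC_\infty\cap\Omega'_j)$. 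Because both $4\bbS_r$ and the lattice $\tau+2r\Z^{2k}$ are unchanged when $\tau$ is translated by an element of $2r\Z^{2k}$, I may let $\tau$ run only over the fundamental cube $Q:=[0,2r)^{2k}$. I will show that the set of $\tau\in Q$ satisfying \emph{(a)} has positive Lebesgue measure, while the set of $\tau\in Q$ failing \emph{(b)} is Lebesgue-null; their difference is then nonempty, and any $\tau$ in it gives a good Manhattan.

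For \emph{(a)} I would average over $\tau$. By definition $4\bbS_r$ is the complement of the cubes $(1-4r)r\W^k+\tau+2r\eta$, so inside each cell it is the outer shell of thickness $r-(1-4r)r=4r^2$ in each of the $2k$ real coordinate directions. Hence, writing the real coordinates of $x-\tau$ reduced modulo $2r$ into $[-r,r)$, one has $x\in 4\bbS_r$ iff at least one reduced coordinate has absolute value $\ge r-4r^2$; for a fixed $x$ and a fixed coordinate direction this holds for a set of $\tau\in Q$ of measure $8r^2(2r)^{2k-1}$, so a union bound over the $2k$ directions gives $\int_Q\mathbf{1}_{4\bbS_r}(x)\,\diff\tau\le 2k\cdot 8r^2(2r)^{2k-1}=8kr\,(2r)^{2k}$ for every $x$. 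Integrating in $x$ and using Fubini, $\int_Q\nu(4\bbS_r\cap\W^k)\,\diff\tau\le 8kr\,(2r)^{2k}$, so by Chebyshev's inequality the set $\{\tau\in Q:\nu(4\bbS_r\cap\W^k)>30kr\}$ has measure at most $\tfrac{8}{30}(2r)^{2k}$, whence its complement in $Q$ has measure at least $\tfrac{11}{15}(2r)^{2k}>0$. (The average bound $8kr$ is already well below $30kr$; the slack is used only to guarantee a \emph{positive-measure} set of admissible $\tau$, which is what allows intersecting with the null set from \emph{(b)}.)

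For \emph{(b)}, recall that $\PC_\infty=\bigcup_{m\ge1}f^m(\rmC_f)$ is a countable union of proper algebraic hypersurfaces of $\P^k$, hence of zero measure for any smooth volume form; since $\pi_j$ is a diffeomorphism onto its image, $Z\subset 100\W^k$ is Lebesgue-null in $\C^k$. A point $\tau+2r\eta$ lies in $Z$ exactly when $\tau\in Z-2r\eta$, so the set of $\tau\in Q$ failing \emph{(b)} is contained in $\bigcup_{\eta\in\Z^{2k}}(Z-2r\eta)$, a countable union of translates of a null set, hence null. Combining with \emph{(a)}, there is $\tau\in Q$ satisfying both; unwinding the chart identification, $\mu(4\bbS_r^j\cap\Omega_j)=\nu(4\bbS_r\cap\W^k)\le 30kr$ and every center $\tau+2r\eta$ of a cell of the Manhattan of parameters $r,\tau$ avoids $\PC_\infty$, so this Manhattan is good. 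The only point needing genuine care is the averaging estimate in \emph{(a)}: one must keep the two length scales separate --- cell size of order $r$ versus wall thickness of order $r^2$ --- so that the mass of the walls comes out $O(kr)$ rather than $O(k)$. Everything else is soft measure theory.
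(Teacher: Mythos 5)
Your proof is correct and follows essentially the same strategy as the paper: average the mass of the thickness-$O(r^2)$ walls over the translation parameter, with a union bound over the $2k$ real coordinate directions, and use that $\PC_\infty$ is Lebesgue-null to make the cell centers avoid it. The only difference is cosmetic: the paper fixes a generic $\tau^*$ and runs a discrete pigeonhole over roughly $1/(10r)$ disjoint slab positions in each coordinate, whereas you average continuously over the fundamental cube $[0,2r)^{2k}$ and conclude via Fubini--Chebyshev intersected with the complement of a null set.
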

\begin{proof}
Here,  we identify $\C^k$ with $\R^{2k}$ and use its standard coordinate system $(x_1,\ldots,x_{2k})$. 
Consider $\tau^*=(a_1^*,\ldots, a_{2k}^*)$ with $0\leq a_l^*\leq r^2$ and
$\tau=\tau^*+(a_1,\ldots,a_{2k})$ with $a_l\in \{0,9r^2, 18r^2,\ldots, 9(N-1)r^2\}$ and $N:=\floor{1/(10r)}-1$. Fix $\tau^*$ so that for all choices of $a_l$ the cells of the Manhattan associated to $r,\tau$ have centers outside $\PC_\infty$. This property holds for any generic choice of $\tau^*$ with respect to the Lebesgue measure because $\PC_\infty$ has Lebesgue measure zero,  being  a countable union of hypersurfaces.  We show that one of the choices of $a_l$ as above  satisfies the lemma.

  Observe that $4\bbS_r^j$   is covered by the union of the set 
$$\bbS(a_1):=\{|x_1-a_1^*-a_1-r|\leq 4r^2\} + 2r\Z^{2k}$$
and the $2k-1$ other similar sets associated to the coordinates $x_2,\ldots,x_{2k}$.
Therefore, it suffices to show that for some choice of $a_1$ we have $\mu(\bbS(a_1))\leq 1/N$.  Note that  the $N$ sets $\bbS(9lr^2)$ with $l=0,\ldots,N-1$ are disjoint.  Since the total mass of $\mu$ on $\Omega_j$ is at most $1$,  we must have  $\mu(\bbS(a_1))\leq 1/N$ for some $a_1$ as above.  This finishes the proof of the lemma. 
\end{proof}

In what follows, we will only use good Manhattans for the sets $\Omega_j$. 

\begin{definition} \rm \label{d:good_branch}
Let $D\subset\P^k$ be a connected set.  A continuous map $g:D\to\P^k$ is called an  {\it  inverse branch of order $m$} for $f$ on $D$  if  $f^m \circ g$ is the identity map on  $D$.  We call $\diam(g(D))$ {\it the diameter} of the inverse branch.
\end{definition}

We will mostly work in the case where $D$ is a complex manifold.  In this case, every inverse branch $g$ on $D$ is automatically holomorphic.  When $D$ is a domain in $\P^k$,  observe that different inverse branches of  a given order $m$ on $D$ have disjoint images. In this case,  for any point $a\in D$,  the branch $g$ is uniquely determined by the point $g(a)\in f^{-m}(a)$ and there are at most $d^{km}$ such inverse branches for $D$.

Later on,  we will apply the next proposition to $\ell \simeq \xi m$ for some fixed $0<\xi<1$.
\begin{proposition} \label{p:inverse}
There are constants $A_3>0$ and $0 < \vartheta_0 < 1$ such that the following holds for  every $m \geq 0$ and every  $0\leq \ell \leq m$. If $\B$ is a ball of $\P^k$ such that $\B\cap \PC_{\ell}=\varnothing$, then the ball $\vartheta_0\B$ admits at least $(1-A_3 d^{-\ell}) d^{km}$ inverse branches of order $m$ and of diameters less than $A_3 d^{-\frac{m-\ell}{2}}$.  
\end{proposition}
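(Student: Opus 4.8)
\textbf{Proof proposal for Proposition \ref{p:inverse}.}

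The plan is to count inverse branches via a volume/mass argument on pullbacks, controlling the ``bad'' ones that run into the postcritical set, and to extract the diameter bound from the contraction of the Poincaré (Kobayashi) metric under inverse branches on a set avoiding $\PC_\ell$. First I would fix a ball $\B$ with $\B\cap\PC_\ell=\varnothing$ and look at $f^{-\ell}(\B)$: since $\B$ avoids all of $f(\rmC_f),\dots,f^\ell(\rmC_f)$, the map $f^\ell$ is an unramified covering over $\B$, so $f^{-\ell}(\B)$ splits into exactly $d^{k\ell}$ disjoint copies of $\B$, each mapped biholomorphically. On each such copy $\B_i$, I then want to iterate $f$ another $m-\ell$ times backwards; here ramification can occur, but I only need to count, not avoid all of it. For a fixed center $x_0$ of $\B$, the fiber $f^{-m}(x_0)$ has $d^{km}$ points counted with multiplicity, and a point $y\in f^{-m}(x_0)$ fails to give an inverse branch of $f^m$ on (a shrinking of) $\B$ only if the local degree of $f^m$ at $y$ exceeds $1$, i.e.\ $y$ lies over a point of $\rmC_{f^m}=\rmC_f\cup f^{-1}(\rmC_f)\cup\cdots\cup f^{-(m-1)}(\rmC_f)$. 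By the splitting above, such a $y$ must have its first $\ell$ forward iterates avoiding $\rmC_f$ (because they land in copies over $\B$ where $f$ is a local bi: wait, one must be careful — the critical values, not the critical points, avoid $\B$), so the obstruction to being a good branch comes only from the critical set encountered in the last $m-\ell$ steps. The number of points of $f^{-m}(x_0)$ lying in $\bigcup_{i=0}^{m-\ell-1} f^{-i}(\rmC_f)$ pulled back appropriately is at most $\deg(\rmC_f)\cdot\sum_{i} d^{ki}\cdot(\text{local degree bounds})$, which one bounds crudely by $\const\cdot d^{k(m-\ell)}\cdot d^{k\ell}\cdot d^{-\ell}$-type estimate; more simply, the count of bad branches is $O(d^{km}d^{-\ell})$, giving at least $(1-A_3 d^{-\ell})d^{km}$ good ones. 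I would phrase this cleanly using that the pullback current $(f^{\ell})^*[\rmC_f$-cycle$]$ has controlled mass and that $\B$ avoiding $\PC_\ell$ forces these bad branches to be ``born'' only in the last $m-\ell$ iterations.

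For the diameter estimate I would use the Kobayashi/Poincaré metric. On the copy $\B_i\subset f^{-\ell}(\B)$, and more to the point on any inverse branch image $g(\vartheta_0\B)$ of a good branch of order $m$, I want to say that $f^{m-\ell}$ expands the Kobayashi metric by a definite factor. The standard mechanism (as in Briend--Duval) is: the Kobayashi metric of a ball of fixed size is comparable to the Euclidean one on $\vartheta_0\B$; each good inverse branch $g_i:\B_i\to\P^k$ of order $m-\ell$ (the remaining iterations) is distance-decreasing for the Kobayashi metric of its image, which sits inside $\P^k$ minus a large enough pluripolar/hyperbolic set so that one gets a \emph{strict} contraction factor of order $d^{-(m-\ell)/2}$ on average or uniformly after shrinking to $\vartheta_0\B$. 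Concretely, I would invoke that $\|Df^j\|$ along the branch is bounded below by $\gtrsim d^{j/2}$ away from a neighborhood of the postcritical set — but since we have \emph{removed} only the first $\ell$ levels, the right statement is the Kobayashi-contraction one: on $f^{-\ell}(\B)$ (all $d^{k\ell}$ copies), the inverse of $f^{m-\ell}$ contracts the Kobayashi metric, and because $\B$ is a genuine ball (not a thin domain), its Kobayashi metric is bounded, so $\diam g(\vartheta_0\B)\le A_3 d^{-(m-\ell)/2}$. The exponent $1/2$ comes from the fact that the pulled-back Fubini--Study mass $(f^{m-\ell})^*\omegaFS$ has total mass $d^{m-\ell}$ on a curve, and a good branch image, being an embedded copy of $\vartheta_0\B$, has $(m-\ell)$-fold iterated preimage volume $\sim d^{-k(m-\ell)}$ in the $2k$-dimensional measure, i.e.\ linear size $\sim d^{-(m-\ell)/2}$; making this into an $L^\infty$ diameter bound rather than an $L^2$ one requires the usual volume-to-diameter comparison valid because the branch images are graphs/biholomorphic copies of a fixed ball.

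More precisely, here is how I would organize the write-up. Step 1: since $\B\cap\PC_\ell=\varnothing$, write $f^{-\ell}(\B)=\B_1\sqcup\cdots\sqcup\B_{d^{k\ell}}$ with $f^\ell|_{\B_i}:\B_i\to\B$ biholomorphic; record that each $\B_i$ contains a ball comparable to $\B$ and that the Kobayashi metrics are uniformly comparable to Euclidean on $\vartheta_1\B_i$ for a fixed $\vartheta_1$. Step 2: on each $\B_i$, run the Briend--Duval construction of inverse branches for $f^{m-\ell}$: the number of $y\in f^{-(m-\ell)}(\text{center of }\B_i)$ that do \emph{not} extend to an inverse branch of $f^{m-\ell}$ on $\vartheta_0\B_i$ is at most $O(d^{-(m-\ell)}\cdot d^{k(m-\ell)})$ — actually here we must be careful to get $d^{-\ell}$ and not $d^{-(m-\ell)}$ in the final bound; the cleanest route is to NOT lose branches in the last $m-\ell$ steps beyond the $\vartheta_0$-shrinkage (which is exactly what the ``removal of $\PC_\ell$'' buys us: the branches that could be bad are precisely those whose orbit meets $\rmC_f$ within the first $\ell$ steps, but those are already excluded because their critical values would lie in $\PC_\ell\cap\B=\varnothing$), so that \emph{all} $d^{k\ell}$ top-level copies survive and on each, after shrinking by $\vartheta_0$, one gets at least $(1-A_3 d^{-\ell})d^{k(m-\ell)}$ branches of $f^{m-\ell}$, hence $(1-A_3 d^{-\ell})d^{km}$ in total. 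Step 3: diameter. Each such branch $g$ of $f^m$ on $\vartheta_0\B$ factors as $g=g''\circ g'$ where $g'$ is a branch of $f^\ell$ (biholomorphic, bounded distortion, so $\diam g'(\vartheta_0\B)\le A'$, a fixed constant) and $g''$ is a branch of $f^{m-\ell}$ on $g'(\vartheta_0\B)$. For $g''$, the image lies in $\P^k$ and $g''$ is distance-decreasing for the Kobayashi metric of a fixed ball around $g'(\vartheta_0\B)$; combining with the lower bound $\|D f^{m-\ell}\|\gtrsim d^{(m-\ell)/2}$ along the branch (equivalently, the area bound on iterated preimages via $(f^{m-\ell})_*\omegaFS^k$, of total mass $d^{(m-\ell)}$ on the relevant cycle), one gets $\diam g''(g'(\vartheta_0\B))\le A_3 d^{-(m-\ell)/2}$, and hence $\diam g(\vartheta_0\B)\le A_3 d^{-(m-\ell)/2}$ after adjusting constants.

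The main obstacle I anticipate is Step 2 — specifically, pinning down the exact exponent in the count of bad branches and making sure it is $d^{-\ell}$ (as claimed) and not something weaker. The subtlety is that removing $\PC_\ell$ from $\B$ does not literally make $f^m$ unramified over $\B$ (only $f^\ell$ is), so one genuinely loses some branches in the last $m-\ell$ steps; the claim $(1-A_3 d^{-\ell})d^{km}$ is stronger than the naive $(1-A_3 d^{-(m-\ell)})d^{km}$ one would get from a blind application of Briend--Duval to $f^{m-\ell}$ alone. Resolving this requires the observation that a branch is bad \emph{only} if the orbit of its basepoint meets the critical set, and that over $\B_i$ (where $f^\ell$ is a biholomorphism) the number of preimages of $\rmC_f$-related bad configurations is controlled by $\deg\rmC_f\cdot d^{k(\text{remaining})}$ summed geometrically, which after dividing by $d^{km}$ telescopes to $O(d^{-1})$ per lost step but, crucially, multiplied by the \emph{fixed} number $d^{k\ell}$ of good top copies — so the total bad fraction is $O(d^{-(m-\ell)})$ per copy, summing to $O(d^{-(m-\ell)})$... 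This suggests the statement's $d^{-\ell}$ should really be read with $\ell$ and $m-\ell$ in the roles I may have swapped; I would double-check against the later application $\ell\simeq\xi m$ (where $d^{-\ell}=d^{-\xi m}$ and $d^{-(m-\ell)/2}=d^{-(1-\xi)m/2}$ are both exponentially small, as needed) and align the bookkeeping accordingly. The other routine-but-fiddly point is the Kobayashi-to-Euclidean comparison being uniform over all $M$ charts and all the copies $\B_i$, which follows from compactness of $\P^k$ and the fixed geometry of the balls, and I would dispatch it with a sentence.
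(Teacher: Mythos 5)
Your overall strategy (split off the first $\ell$ unramified backward steps, count the branches lost to the critical set afterwards, and get the diameter from an area bound of order $d^{-(m-\ell)}$) points in the right direction, but the two key steps are not actually carried out, and one of them rests on a false claim. For the count, you assert that ``the branches that could be bad are precisely those whose orbit meets $\rmC_f$ within the first $\ell$ steps, but those are already excluded because their critical values would lie in $\PC_\ell\cap\B=\varnothing$.'' This is backwards: the hypothesis $\B\cap\PC_\ell=\varnothing$ guarantees precisely that the \emph{first $\ell$ backward} steps are unramified, and says nothing about the remaining $m-\ell$ steps, where branches genuinely get lost. Consequently your claim that each copy $\B_i\subset f^{-\ell}(\B)$ carries $(1-A_3d^{-\ell})d^{k(m-\ell)}$ branches of $f^{m-\ell}$ has no justification — the $\B_i$ enjoy no postcritical avoidance at all — and you yourself concede you may have swapped $\ell$ and $m-\ell$. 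The correct mechanism (and the paper's) is an inductive B\'ezout count along a line $\Delta$ through the center of $\B$: a branch of order $j$ fails to extend only if its image meets $f(\rmC_f)$, and $\#\big(f^{-j}(\Delta)\cap f(\rmC_f)\big)\leq d^{(k-1)j}\,d^{k-1}\deg(\rmC_f)$, i.e.\ a fraction $O(d^{-j})$ of the $d^{kj}$ branches is lost at step $j$. Since losses can only start at $j=\ell$, summing the geometric series $\sum_{j\geq\ell}d^{-j}$ gives the factor $1-A_3d^{-\ell}$; this is the point your write-up leaves unresolved.

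The diameter bound has a similar gap. Kobayashi distance-decreasing gives no rate, and a lower bound $\|Df^{m-\ell}\|\gtrsim d^{(m-\ell)/2}$ along a branch is not available here. Moreover, your ``volume-to-diameter comparison'' for full-dimensional branch images is not valid: a biholomorphic image of a ball with $2k$-volume $d^{-k(m-\ell)}$ can be long and thin, so small volume does not control diameter. The paper avoids this by working on one-dimensional slices: for a line $\Delta$ through the center, the total area of $f^{-m}(\Delta)$ is $d^{(k-1)m}$, so all but an $O(d^{-\ell})$-fraction of the branches over $\Delta\cap\B$ have image area at most $d^{-(m-\ell)}$, and for holomorphic discs an area bound does yield a diameter bound of order $d^{-(m-\ell)/2}$ after shrinking to $\Delta\cap\frac12\B$ (\cite[Lemma 1.55]{dinh-sibony:cime}). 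One then needs an extra ingredient you do not have, namely Sibony--Wong's theorem, to pass from a sufficiently large family of lines through the center to a uniform diameter estimate on the whole ball $\vartheta_0\B$. Without the slicing plus Sibony--Wong step, the stated bound $A_3d^{-\frac{m-\ell}{2}}$ on $\vartheta_0\B$ is not established.
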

\begin{proof}[Sketch of the proof]
The proof follows from the arguments in  \cite[Proposition 1.51]{dinh-sibony:cime}.  Since $\B\cap \PC_{\ell}=\varnothing$,   the number of inverse branches of order $\ell$ defined over $\B$ is maximal,  namely $d^{k\ell}$.  Let $a$ be the center of $\B$ and denote by $\Dc \simeq \P^{k-1}$ the family of complex lines through $a$.  Let $\Delta \in \Dc$.  By B\'ezout's theorem $f^{-\ell}(\Delta) \cap f(\rm C)$ contains at most $d^{(k-1)\ell} d^{k-1} \deg({\rm C}_f)$ points.  In other words, there are at most $d^{(k-1)\ell} d^{k-1} \deg({\rm C}_f)$ inverse branches on $\Delta \cap \B$ that meet $\PC_1$.  We discard such branches.  By construction,  the remaining $d^{k\ell}(1 - d^{-\ell}d^{k-1} \deg({\rm C}_f))$ branches can be pulled back by $f$,  providing $d^{k(\ell+1)}(1-d^{-\ell}d^{k-1} \deg({\rm C}_f))$ branches of order $\ell + 1$.  Arguing inductively,  one can show that there is at least $d^{km}(1-A d^{-\ell}) $ inverse branches of order $m$ on $\Delta \cap \B$ for some constant $A>0$.

Using the fact that the total area of $f^{-m}(\Delta)$ is $d^{(k-1) m}$ one can exclude a small number of branches of large area,  yielding  $d^{km}(1-A' d^{-\ell}) $ inverse branches of order $m$ on $\Delta \cap \B$ whose image has area less than $d^{-(m-\ell)}$ for some constant $A'>0$.  Then,  the restrictions of these branches to $\Delta \cap \frac12 \B$  are of diameter of order  $d^{-\frac{m-\ell}{2}}$  (see  \cite[Lemma 1.55]{dinh-sibony:cime}).

Applying the above argument for a large enough set of lines $\Delta \in \Dc$ and using Sibony-Wong's theorem \cite{sibony-wong},  one can show that the above diameter estimates is valid on the whole ball  $\vartheta_0\B$ for some constant $\vartheta_0 > 0 $.  
\end{proof}

The following equidistribution theorem,  which is a direct consequence of results in \cite{dinh-sibony:equid-speed},  will be crucial to us.

\begin{theorem} \label{t:eq-pt}
There exist an integer $n_0\geq 1$ and a constant $A_4>0$ such that,  for every point $a$ in $\P^k\setminus\PC_{n_0}$, every test $\Cc^1$ function $\phi$ on $\P^k$ and every $m\geq 0$, we have 
$$\Big| \Big\langle {1\over d^{km}} (f^m)^*(\delta_a)-\mu,\phi \Big\rangle \Big| \leq A_4 \Big[1+\log^+{1\over \dist(a,\PC_{n_0})}\Big]d^{-{m\over 3}} \|\phi\|_{\Cc^1},$$
where $\log^+:=\max(\log,0)$. 
\end{theorem}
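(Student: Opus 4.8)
The plan is to derive Theorem~\ref{t:eq-pt} from the quantitative equidistribution of preimages of \cite{dinh-sibony:equid-speed}; the only genuinely new ingredient is to check that the exceptional set appearing in that result is contained in a postcritical set $\PC_{n_0}$ of finite order.

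I would start by invoking the estimate \eqref{eq:equidist-preimage-speed} with the particular value $\lambda:=d^{1/3}$, which is admissible because $1<d^{1/3}<d^{1/2}$ for $d\geq 2$, after rewriting $\frac{1}{d^{km}}\sum_{x\in f^{-m}(a)}\delta_x$ as $\frac{1}{d^{km}}(f^m)^*\delta_a$, using that $(f^m)^*\delta_a=\sum_{x\in f^{-m}(a)}\mult_x(f^m)\,\delta_x$ has total mass $d^{km}$. This produces a proper algebraic subset $\Ec\subset\P^k$ --- the exceptional set of the dynamical system, which one may take to be the maximal totally invariant proper algebraic subset, so that $f^{-1}(\Ec)=\Ec$ --- and a constant $C>0$ such that, for all $a\notin\Ec$, all $\Cc^1$ functions $\phi$ and all $m\geq 0$,
$$\Bigl|\Bigl\langle\tfrac{1}{d^{km}}(f^m)^*\delta_a-\mu,\phi\Bigr\rangle\Bigr|\leq C\Bigl(1+\log^+\tfrac{1}{\dist(a,\Ec)}\Bigr)^{1/2}d^{-m/3}\|\phi\|_{\Cc^1},$$
the range of small $m$ being absorbed into the constant via the trivial bound $|\langle\delta_a-\mu,\phi\rangle|\leq 2\|\phi\|_{\Cc^1}$. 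Granting the inclusion $\Ec\subseteq\PC_{n_0}$, the theorem then follows at once with $A_4:=\max(C,2)$, since $\dist(a,\PC_{n_0})\leq\dist(a,\Ec)$ and $\sqrt t\leq t$ for $t\geq 1$.

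The step I expect to be the crux is proving $\Ec\subseteq\PC_{n_0}$ for a suitable $n_0$. Since $f$ is finite and $f(\Ec)=\Ec$, it permutes the finitely many irreducible components of $\Ec$; let $N$ be a common multiple of the cycle lengths of this permutation, so that $f^N$ maps each component of $\Ec$ into itself, and, together with $f^{-N}(\Ec)=\Ec$, one deduces $f^{-N}(E)=E$ for every irreducible component $E$ of $\Ec$. Fix such an $E$, of dimension $s\leq k-1$. Because $(f^N)^*\omegaFS$ is cohomologous to $d^N\omegaFS$, the restriction $(f^N|_E)^*(\omegaFS|_E)$ represents the class $d^N[\omegaFS|_E]$, so the projection formula gives $\deg(f^N|_E)=d^{Ns}$. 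Now a generic point $y\in E$ has unramified fibre of cardinality $d^{Ns}$ under $f^N|_E$, and since $f^{-N}(E)=E$ this fibre equals the full preimage $f^{-N}(y)$ in $\P^k$; as $d^{Ns}<d^{Nk}=\deg(f^N)$, some point of $f^{-N}(y)$ must be a critical point of $f^N$, whence $y\in f^N(\rmC_{f^N})$. Letting $y$ range over a Zariski-dense subset of $E$ and using that $f^N(\rmC_{f^N})$ is closed, we get $E\subseteq f^N(\rmC_{f^N})$. Finally, $\rmC_{f^N}=\bigcup_{j=0}^{N-1}f^{-j}(\rmC_f)$ and $f^N(f^{-j}(\rmC_f))=f^{N-j}(\rmC_f)$ yield $f^N(\rmC_{f^N})=\PC_N$, so $E\subseteq\PC_N$; as the same $N$ works for all components, we take $n_0:=N$.

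The one point requiring care is to confirm that the exceptional set furnished by \cite{dinh-sibony:equid-speed} for our fixed $\lambda=d^{1/3}<d^{1/2}$ can indeed be taken to be (contained in) the maximal totally invariant proper algebraic subset; this is classical for the unquantified statement \eqref{eq:equidist-preimage} and persists in the quantitative one in the regime $\lambda<d^{1/2}$, so no further argument is needed there.
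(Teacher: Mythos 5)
The paper offers no argument for this statement beyond citing \cite{dinh-sibony:equid-speed}, so the question is whether your derivation from that reference is valid, and it is not: the bridging claim in your last paragraph is false. You assert that for a fixed $\lambda=d^{1/3}<d^{1/2}$ the exceptional set $\Ec_\lambda$ of \eqref{eq:equidist-preimage-speed} "can be taken to be (contained in) the maximal totally invariant proper algebraic subset" $\Ec$, with "no further argument needed". This is not what \cite{dinh-sibony:equid-speed} proves (note that their exceptional set is indexed by $\lambda$ precisely because it is not the totally invariant set), and the statement you are assuming is actually wrong. Consider in $\P^1$ the polynomial $f(z)=z^{d-1}(z-1)$ with $d$ large: the point $a=0$ is a critical fixed point of local degree $d-1$, it is not totally invariant (its preimage also contains $1$), so $a\notin\Ec=\{\infty\}$ and $\dist(a,\Ec)$ is bounded below; yet $d^{-m}(f^m)^*\delta_0$ carries an atom at $0$ of mass at least $((d-1)/d)^m$, which for large $d$ is far bigger than $d^{-m/3}$, so no estimate of the claimed form with exceptional set $\Ec$ can hold. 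Periodic points or subvarieties of large multiplicity that are not totally invariant are exactly the obstruction; they always lie in a finite-order postcritical set (here $0\in\PC_1$), which is why both \cite{dinh-sibony:equid-speed} and Theorem \ref{t:eq-pt} remove a set of postcritical type rather than $\Ec$.

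Because of this, the inclusion you do prove, namely $\Ec\subseteq\PC_N$ for the totally invariant set (your degree argument $\deg(f^N|_E)=d^{Ns}<d^{Nk}$ forcing critical preimages is correct and classical), goes in the unhelpful direction: it would only close the argument if one already knew $\Ec_{d^{1/3}}\subseteq\Ec$, which is the very point that fails. What is actually needed, and what the results of \cite{dinh-sibony:equid-speed} provide, is that for a given rate below $d^{1/2}$ the exceptional set can be taken to be (inside) a postcritical set $\PC_{n_0}$ of finite order $n_0$ depending on the rate, with the constant controlled by $\dist(a,\PC_{n_0})$; that is the form quoted in Theorem \ref{t:eq-pt}. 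As written, your proof has a genuine gap at this step, and the reduction to the totally invariant exceptional set cannot be repaired.
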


Consider a good Manhattan of parameters $r,\tau$ for a chart $\Omega_j:=\pi_j^{-1}(\W^k)$ of $\P^k$.  We will only need the case where $r$ is independent of $j$ but $\tau$ may depend on $j$.
As before, define ${1\over 2}\Omega_j:= \pi_j^{-1}({1\over 2}\W^k)$. We will add the index $j$ to the cells  and street networks to highlight the dependence on $j$.
The following corollary will later be applied to the centers of cells of a suitable Manhattan,  where $n_0$ is as above.  We will take $m=\floor{\zeta n}$ or $m=\ceil{(1-\zeta)n}$ for some large integer $n$ and a small constant $0<\zeta<1$. 

\begin{corollary} \label{c:orbit-street}
Let $0<r<1/100$ and $n_0$ be as in Theorem \ref{t:eq-pt}.  There is a constant $A_5>0$ independent of $r$ such that for every $m\geq 0$ with $d^{-m}<r^{15}$, every $1\leq j\leq M$, and every point $a$ in $\P^k$ with $\dist(a,\PC_{n_0})\geq r$, the number of points in $f^{-m}(a)\cap 3\bbS_r^{j}\cap {1\over 2}\Omega_j$, counted with multiplicities, is bounded by $A_5rd^{km}$.
\end{corollary}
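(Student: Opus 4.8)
The plan is to rewrite the point count as the mass of the probability measure $\nu_m:=d^{-km}(f^m)^*(\delta_a)=d^{-km}\sum_{x\in f^{-m}(a)}\delta_x$ (counting multiplicities) on $3\bbS_r^{j}\cap\frac12\Omega_j$, and then to compare this mass with the $\mu$-mass of a slightly larger set, combining the quantitative equidistribution of Theorem \ref{t:eq-pt} with the defining property of a good Manhattan from Lemma \ref{l:Manhattan}. The bridge between $\nu_m$ and $\mu$ is a $\Cc^1$ cut-off function whose $\Cc^1$-norm is controlled by $r^{-2}$.

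Concretely, I would first construct $\phi\in\Cc^1(\P^k)$ with $0\leq\phi\leq1$, $\phi\equiv1$ on $3\bbS_r^{j}\cap\frac12\Omega_j$, $\supp\phi\subset 4\bbS_r^{j}\cap\Omega_j$, and $\|\phi\|_{\Cc^1}\leq A'r^{-2}$, where $A'>0$ depends only on $k$ and the fixed atlas. The one geometric fact needed is that in the chart $\pi_j$ the cube $(1-4r)\W^k_{r,\eta}$ lies inside $(1-3r)\W^k_{r,\eta}$ with $\ell^\infty$-margin exactly $r^2$, so that $3\bbS_r$ is contained in the $r^2$-interior of $4\bbS_r$; likewise $\frac12\W^k$ lies in the $(1/4)$-interior of $\W^k$. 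Hence, using the uniform bounds $\|d\pi_j\|,\|(d\pi_j)^{-1}\|\leq A_2$, the set $3\bbS_r^{j}\cap\frac12\Omega_j$ sits at Fubini-Study distance $\gtrsim r^2$ from the complement of $4\bbS_r^{j}\cap\Omega_j$, and a standard bump construction in the chart, transported to $\P^k$ and extended by $0$, produces such a $\phi$ with gradient $\lesssim r^{-2}$.

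Next I would apply Theorem \ref{t:eq-pt} to $\phi$ and $a$ (note $a\notin\PC_{n_0}$ because $\dist(a,\PC_{n_0})\geq r>0$). Since $\log^+\frac{1}{\dist(a,\PC_{n_0})}\leq\log\frac1r$, we get
$$\big|\langle\nu_m,\phi\rangle-\langle\mu,\phi\rangle\big|\leq A_4\Big(1+\log\tfrac1r\Big)d^{-m/3}\|\phi\|_{\Cc^1}\leq A'A_4\Big(1+\log\tfrac1r\Big)d^{-m/3}r^{-2}.$$
Because the Manhattan is good, Lemma \ref{l:Manhattan} gives $\langle\mu,\phi\rangle\leq\mu(4\bbS_r^{j}\cap\Omega_j)\leq 30kr$. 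On the other hand $\phi\geq0$ and $\phi\equiv1$ on $3\bbS_r^{j}\cap\frac12\Omega_j$, so discarding the remaining nonnegative terms in $\langle\nu_m,\phi\rangle$ one obtains
$$d^{-km}\,\#\big(f^{-m}(a)\cap 3\bbS_r^{j}\cap\tfrac12\Omega_j\big)\leq\langle\nu_m,\phi\rangle\leq 30kr+A'A_4\Big(1+\log\tfrac1r\Big)d^{-m/3}r^{-2},$$
the count being taken with multiplicities. Finally, the hypothesis $d^{-m}<r^{15}$ gives $d^{-m/3}<r^5$, so the error term is at most $A'A_4(1+\log\frac1r)r^3\leq A'A_4\,r$, using that $(1+\log\frac1r)r^2\leq1$ for $0<r<1/100$. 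Thus the count is at most $(30k+A'A_4)\,r\,d^{km}$, and one may take $A_5:=30k+A'A_4$, which is independent of $r$ (and of $j$, $a$ and $m$).

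The only genuinely delicate point is the bookkeeping of the powers of $r$: one has to make sure that the equidistribution error, of size $\sim(1+\log\frac1r)\,d^{-m/3}/r^{2}$, is negligible compared with the target $r$ — which is exactly why the hypothesis $d^{-m}<r^{15}$ is imposed and why the distance bound $\dist(a,\PC_{n_0})\geq r$ is needed to tame the logarithmic factor. Building $\phi$ with the sharp $\Cc^1$-bound $\lesssim r^{-2}$ (rather than a worse power of $r$) is equally important, but it is routine once the $r^2$-width of the collar between $3\bbS_r$ and $4\bbS_r$ is observed.
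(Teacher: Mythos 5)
Your proposal is correct and follows essentially the same route as the paper: build a $\Cc^1$ cut-off $\phi$ with $\|\phi\|_{\Cc^1}\lesssim r^{-2}$, $\phi\equiv1$ on $3\bbS_r^{j}\cap\tfrac12\Omega_j$ and $\supp\phi\subset 4\bbS_r^{j}\cap\Omega_j$, then combine Theorem~\ref{t:eq-pt} (using $\dist(a,\PC_{n_0})\geq r$ to tame the $\log^+$ term) with Lemma~\ref{l:Manhattan} to bound $\langle\mu,\phi\rangle\leq 30kr$, and absorb the equidistribution error via $d^{-m}<r^{15}$. The paper constructs $\phi$ more explicitly as $\chi(1-\sum_\eta\chi_\eta)$ rather than via a collar-width argument, but the estimate and all subsequent steps coincide with yours.
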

\begin{proof}
One needs to bound the mass of the measure $(f^m)^*(\delta_a)$ on $3\bbS_r^{j}\cap {1\over 2}\Omega_j$ by a constant times $rd^{km}$. For this purpose, we apply Theorem \ref{t:eq-pt} for a suitable function $\phi$. 

Choose a smooth cut-off function $0\leq \chi \leq 1$ supported by $\Omega_j$ and equal to 1 on ${1\over 2}\Omega_j$.
For each $\eta\in\Z^{2k}$, choose a smooth cut-off function $0\leq \chi_\eta \leq 1$ with support in $(1-3r)\W_{r,\eta}^{k,j}$ and equal to 1 on $(1-4r)\W_{r,\eta}^{k,j}$.  By constructing $\chi_\eta$ as a product of cutoff functions on one-dimensional cells,  one can ensure  that $\|\chi_\eta\|_{\Cc^1}\lesssim 1/r^2$.  Define $\phi:=\chi(1-\sum_\eta \chi_\eta)$. It is not difficult to see that $\|\phi\|_{\Cc^1}\lesssim 1/r^2$ because the supports of $\chi_\eta$ are disjoint. By construction, $\supp(\phi)\subset 4 \bbS_r^{j}\cap\Omega_j$ and $\phi=1$ on $3\bbS_r^{j}\cap {1\over 2}\Omega_j$.
We deduce from Theorem \ref{t:eq-pt}  that 
$$ \langle (f^m)^*(\delta_a),\phi \rangle \leq d^{km}\langle\mu,\phi\rangle + A  \Big[1+\log^+{1\over \dist(a,\PC_{n_0})}\Big]d^{-{m\over 3}} r^{-2} d^{km},$$
for some constant $A>0$. The left hand side is larger or equal to the mass of $(f^m)^*(\delta_a)$ on $3\bbS_r^{j}\cap {1\over 2}\Omega_j$.  Moreover, $\langle\mu,\phi\rangle$ is bounded by the mass of $\mu$ on $4 \bbS_r^{j}\cap\Omega_j$ which is bounded by a constant times $r$ by Lemma \ref{l:Manhattan}. The result follows from the properties of $a$, $r$ and $m$.
\end{proof}

\begin{corollary} \label{c:orbit-cell}
There is a constant $A_6>0$ such that for every  $1\leq j \leq M$,  every cell  $\W_{r,\eta}^{k,j}$ in $\Omega_j$,  every $m\geq 0$ with $d^{-m}<r^{30k}$, and every point $a$ in $\P^k$ with $\dist(a,\PC_{n_0})\geq r$, if $p$ is the number of points in $f^{-m}(a)\cap (1-2r)\W_{r,\eta}^{k,j}$, counted with multiplicities, then it satisfies 
$$\mu\big((1-3r)\W_{r,\eta}^{k,j}\big) d^{km} -A_6r^{2k+2}d^{km} \leq p \leq \mu\big(\W_{r,\eta}^{k,j}\big) d^{km} +A_6r^{2k+2}d^{km}.$$
\end{corollary}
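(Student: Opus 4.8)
The plan is to run the same argument as in the proof of Corollary~\ref{c:orbit-street}, applying Theorem~\ref{t:eq-pt} to two well-chosen test functions: one producing the upper bound on $p$ and one producing the lower bound.

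For the upper bound I would fix a smooth function $0\le\phi\le 1$ on $\P^k$ with $\supp(\phi)\subset\W_{r,\eta}^{k,j}$ and $\phi\equiv 1$ on $(1-2r)\W_{r,\eta}^{k,j}$. The cube $\W_{r,\eta}^{k,j}$ and its dilate $(1-2r)\W_{r,\eta}^{k,j}$ are concentric with half-side lengths $r$ and $(1-2r)r$, so they are separated by a collar of width of order $r^2$; building $\phi$ as a product of one-dimensional cut-off functions on the coordinate intervals therefore gives $\|\phi\|_{\Cc^1}\lesssim r^{-2}$, where I use property~(2) of the atlas to compare the Euclidean $\Cc^1$ norm on the chart $\Omega_j$ with the Fubini--Study one on $\P^k$ (extending $\phi$ by zero outside $\Omega_j$ is harmless since the cell is relatively compact in $\Omega_j$). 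Because $(f^m)^*(\delta_a)$ counts preimages with multiplicity and $\ind_{(1-2r)\W_{r,\eta}^{k,j}}\le\phi\le\ind_{\W_{r,\eta}^{k,j}}$, one obtains $p\le\langle(f^m)^*(\delta_a),\phi\rangle$ and $\langle\mu,\phi\rangle\le\mu(\W_{r,\eta}^{k,j})$. Symmetrically, for the lower bound I would take $0\le\psi\le 1$ with $\supp(\psi)\subset(1-2r)\W_{r,\eta}^{k,j}$ and $\psi\equiv 1$ on $(1-3r)\W_{r,\eta}^{k,j}$; here the separating collar again has width of order $r^2$, so $\|\psi\|_{\Cc^1}\lesssim r^{-2}$, and $\psi\le\ind_{(1-2r)\W_{r,\eta}^{k,j}}$ together with $\psi\ge\ind_{(1-3r)\W_{r,\eta}^{k,j}}$ give $\langle(f^m)^*(\delta_a),\psi\rangle\le p$ and $\langle\mu,\psi\rangle\ge\mu((1-3r)\W_{r,\eta}^{k,j})$.

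Then I would feed $\phi$ and $\psi$ into Theorem~\ref{t:eq-pt}. Since $\dist(a,\PC_{n_0})\ge r$ and $r<1$, the bracket $\big[1+\log^+(1/\dist(a,\PC_{n_0}))\big]$ is at most $1+\log(1/r)$, so the equidistribution error for either function is bounded by a constant times $(1+\log(1/r))\,d^{-m/3}r^{-2}$. The hypothesis $d^{-m}<r^{30k}$ gives $d^{-m/3}<r^{10k}$, hence this error is $\lesssim(1+\log(1/r))\,r^{10k-2}$. Now $10k-2\ge 2k+2$ for every $k\ge 1$, and the map $r\mapsto(1+\log(1/r))\,r^{8k-4}$ is bounded on $(0,1/100)$ since $8k-4\ge 4>0$; therefore the error is $\le A_6\,r^{2k+2}$ for a suitable constant $A_6>0$ depending only on $k$, $d$, and the constants of Theorem~\ref{t:eq-pt}. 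Multiplying the estimate of Theorem~\ref{t:eq-pt} by $d^{km}$ and combining, $p\le\langle(f^m)^*(\delta_a),\phi\rangle\le d^{km}\langle\mu,\phi\rangle+A_6 r^{2k+2}d^{km}\le\mu(\W_{r,\eta}^{k,j})d^{km}+A_6 r^{2k+2}d^{km}$, while the parallel chain with $\psi$ yields $p\ge\mu((1-3r)\W_{r,\eta}^{k,j})d^{km}-A_6 r^{2k+2}d^{km}$, as desired.

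The only point requiring care is the size of the cut-offs: the cube and its sub-cubes are separated only by a collar of width of order $r^2$, so one cannot improve on $\|\phi\|_{\Cc^1}\sim r^{-2}$, and this is precisely why the statement assumes $d^{-m}<r^{30k}$ rather than merely a fixed small power of $r$ as in Corollary~\ref{c:orbit-street} --- one needs $d^{-m/3}$ small enough to beat $r^{2k+2}$ even after multiplying by $r^{-2}$ and a logarithm. Beyond keeping track of these exponents, the argument is routine bookkeeping.
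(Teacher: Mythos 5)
Your proposal is correct and follows essentially the same route as the paper: both bounds come from applying Theorem \ref{t:eq-pt} to cut-off functions squeezed between indicators of the nested cells, with $\|\cdot\|_{\Cc^1}\lesssim r^{-2}$ from the collar of width $\sim r^2$, and the error absorbed into $r^{2k+2}d^{km}$ using $\dist(a,\PC_{n_0})\geq r$ and $d^{-m}<r^{30k}$. The only cosmetic difference is that for the upper bound the paper takes the cut-off equal to $1$ on $(1-r)\W_{r,\eta}^{k,j}$ rather than on $(1-2r)\W_{r,\eta}^{k,j}$, which changes nothing.
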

\begin{proof}
Observe that $p$ is the mass of $(f^m)^*(\delta_a)$ in $(1-2r)\W_{r,\eta}^{k,j}$. 
We choose a smooth cut-off function $0\leq \phi\leq 1$ with compact support in $(1-2r)\W_{r,\eta}^{k,j}$ and equal to 1 on $(1-3r)\W_{r,\eta}^{k,j}$ such that $\|\phi\|_{\Cc^1} \lesssim r^{-2}$. 
By Theorem \ref{t:eq-pt}, we have 
$$ p-  \langle \mu, \phi\rangle d^{km} \geq \langle (f^m)^*(\delta_a), \phi\rangle - \langle \mu, \phi\rangle d^{km}\gtrsim - d^{-{m\over 3}} |\log r| r^{-2} d^{km} \gtrsim - r^{2k+2}d^{km}.$$
The first inequality in the statement follows from the fact that $ \langle \mu, \phi\rangle \geq \mu\big((1-3r)\W_{r,\eta}^{k,j}\big)$. The second one is obtained in the same way using a cut-off function  with 
compact support in $\W_{r,\eta}^{k,j}$ and equal to 1 on $(1-r)\W_{r,\eta}^{k,j}$.
\end{proof}

Note that the total number of cells is $O(r^{-2k})$.  Therefore, the quantity $r^{2k+2}$ in the last corollary is very small.  This  good control is only possible thanks to Theorem \ref{t:eq-pt}.

\section{Construction of repelling periodic points} \label{s:periodic}

The aim of this section is to construct a good number of repelling periodic points together with diameter estimates.  We keep the notation of the last section.  We first fix an arbitrary index $1\leq j_0\leq M$ and define $\Omega:=\Omega_{j_0}=\pi_{j_0}^{-1}(\W^k)$. For simplicity, we will only construct repelling periodic points in ${1\over 2}\Omega$ but the construction uses the other charts ${1\over 2}\Omega_j$ as well.  

Let $0<\gamma<1$ be as in the statement of Theorem \ref{t:main} and $\kappa\geq 1$ be as in Proposition \ref{p:neigh-mes}. Let $0 < \vartheta_0 < 1$  be the constant from Proposition \ref{p:inverse}.  Fix a constant $0<\zeta<\gamma/4$. 
Fix also a constant $\gamma_0 >0$ such that $800\gamma_0\kappa k^2 <\zeta$ and define $\gamma_1:=20\gamma_0\kappa k$. Observe that 
$\gamma_0<1/3200$ and $20\gamma_0 < \gamma_1<1/(80k)$.
In what follows, we will work with inverse branches of orders $n, \ceil{(1-\zeta)n}$ or $\floor{\zeta n}$ for given cells of a good Manhattan.  For this purpose,  the Manhattans and many of the objects related to them will depend on $n$.  We only need to consider $n$ large enough.
Here and in what follows,  we use this property in order to absorb some constants and simplify the notation.  
In particular, we will need that $100k\vartheta_0^{-1}d^{-\gamma_1 n} \leq \delta^{-\kappa}$ for $\delta$ defined below.

Denote
$$V:=\PC_{\floor{10\gamma_0 n}},  \quad  U := \Tub(V, 10k \vartheta_0^{-1}d^{-\gamma_1 n}) \quad \text{and} \quad U':= \Tub(V,   100k \vartheta_0^{-1}d^{-\gamma_1 n}).$$
Observe that  $V$ is a hypersurface of degree bounded by a constant times $d^{10\gamma_0 (k-1) n}$.  
By Proposition \ref{p:neigh-mes} applied to $\delta:=d^{\floor{10\gamma_0 kn}}$, we obtain that 
$$\mu(U)\leq \mu(U')\leq d^{-\floor{10\gamma_0 kn}}.$$

\smallskip

Fix a good Manhattan for each domain $\Omega_j$ with $$r:=d^{-\gamma_1 n}$$ as in Lemma \ref{l:Manhattan} including one for $\Omega=\Omega_{j_0}$. 
We will use the notation introduced in the last section, in particular, we will add an index $j$ to objects related to $\Omega_j$ while the ones associated to $\Omega$ have no such an index. The cells inside the charts ${1\over 2}\Omega_j$ cover $\P^k$ and we will only consider such cells which are moreover admissible in the following sense. Note that thanks to  the choice of $r$, these cells cover $\P^k\setminus U'$. 

\begin{definition} \rm \label{d:good-cell}
We keep the notation of Section \ref{s:branch}.
\begin{enumerate}
\item A cell $\W_{r,\eta}^{k,j}$  is {\it admissible} if its center is outside $U$. 
\item Let $\W_{r,\eta}^{k}$ and $\W_{r,\eta'}^{k,j}$ be admissible cells. We say that $\W_{r,\eta'}^{k,j}$ is {\it nice with} $\W_{r,\eta}^{k}$ if it admits at least 
$$q_{r,\eta}:=\big[\mu\big((1-3r)\W_{r,\eta}^{k}\big) -d^{-2\gamma_0 n}\mu\big(\W_{r,\eta}^{k}\big) - r^{2k+1}\big] d^{k\ceil{(1-\zeta)n}} $$ 
inverse branches of order $\ceil{(1-\zeta)n}$ with images in $(1-r)\W_{r,\eta}^{k}$ and with diameters less than $d^{-{1-2\zeta\over 2}n}$.
\item An admissible cell $\W_{r,\eta}^{k}$ is said to be {\it safe} if the cells $\W_{r,\eta'}^{k,j}$ which are not nice with $\W_{r,\eta}^{k}$, have a total $\mu$ measure not more that $d^{-\gamma_0 n}$. Otherwise, we call  it \textit{unsafe}.  If $\W_{r,\eta}^{k}$ is not admissible,  we shall also call it {\it unsafe}. 
\end{enumerate}
\end{definition}

Note that we only define  safe cells for the fixed Manhattan on ${1\over 2}\Omega$. Recall that all considered cells are inside the charts ${1\over 2}\Omega_j$ and the charts ${1\over 10} \Omega_j$ cover $\P^k$.

\begin{proposition} \label{p:unsafe}
There is a constant $A_7>0$ such that the unsafe cells in ${1\over 2}\Omega$ have a total $\mu$ measure not more than $A_7d^{-2\gamma_0 n}$.  
\end{proposition}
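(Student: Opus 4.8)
The plan is to bound separately the $\mu$-measure of the two kinds of unsafe cells: those that are not admissible, and those that are admissible but unsafe. For non-admissible cells, the center lies in $U=\Tub(V,10k\vartheta_0^{-1}d^{-\gamma_1 n})$; since a cell has diameter $\lesssim r=d^{-\gamma_1 n}$, the whole cell lies in $U'=\Tub(V,100k\vartheta_0^{-1}d^{-\gamma_1 n})$. As these cells are disjoint, their total $\mu$-measure is at most $\mu(U')\leq d^{-\floor{10\gamma_0 kn}}$, which is $\lesssim d^{-2\gamma_0 n}$ for $n$ large (indeed $10\gamma_0 k\geq 2\gamma_0$). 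So this part is easy.

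The real work is with admissible but unsafe cells. Fix such a cell $\W^k_{r,\eta}$. Being unsafe means the cells $\W^{k,j}_{r,\eta'}$ that are \emph{not} nice with it have total $\mu$-measure $>d^{-\gamma_0 n}$. A cell $\W^{k,j}_{r,\eta'}$ fails to be nice with $\W^k_{r,\eta}$ if it admits fewer than $q_{r,\eta}$ inverse branches of order $\ceil{(1-\zeta)n}$ landing inside $(1-r)\W^k_{r,\eta}$ with small diameter. The plan is to show that, summing over all admissible $\W^k_{r,\eta}$ in $\frac12\Omega$ and over all cells $\W^{k,j}_{r,\eta'}$ not nice with it, a counting/averaging argument forces the total to be small. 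Concretely, for each admissible source cell $\W^{k,j}_{r,\eta'}$ we want to count the pairs $(\W^{k,j}_{r,\eta'},\W^k_{r,\eta})$ where it is not nice. Apply Proposition~\ref{p:inverse} to a ball $\B$ around the center of $\W^{k,j}_{r,\eta'}$: since the cell is admissible its center is outside $U=\Tub(\PC_{\floor{10\gamma_0 n}},\dots)$, and with $\ell=\floor{10\gamma_0 n}$, $m=\ceil{(1-\zeta)n}$ the ball $\vartheta_0\B$ carries at least $(1-A_3 d^{-\ell})d^{km}$ inverse branches of order $m$ with diameters $\lesssim d^{-(m-\ell)/2}\lesssim d^{-(1-2\zeta)n/2}$ (using $\ell\leq 10\gamma_0 n$ and $\gamma_0$ tiny compared to $\zeta$). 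These $d^{km}-O(d^{(k-\ell/\log?)m})$-many branches partition into groups according to which cell $(1-r)\W^k_{r,\eta}$ their image meets. By Corollary~\ref{c:orbit-cell} applied at a point $a$ in the source cell (with $r$ as chosen so $d^{-m}<r^{30k}$ holds for $n$ large since $m\sim n$ and $r=d^{-\gamma_1 n}$), the number of branches landing in $\W^k_{r,\eta}$ is between $\mu((1-3r)\W^k_{r,\eta})d^{km}-A_6 r^{2k+2}d^{km}$ and $\mu(\W^k_{r,\eta})d^{km}+A_6 r^{2k+2}d^{km}$; comparing with the threshold $q_{r,\eta}$ and noting the slack $d^{-2\gamma_0 n}\mu(\W^k_{r,\eta})+r^{2k+1}$ in the definition of nice dominates $A_6 r^{2k+2}$ plus the $O(A_3 d^{-\ell}d^{km})$ deficit (here $d^{-\ell}=d^{-\floor{10\gamma_0 n}}$ and $r^{2k+1}=d^{-(2k+1)\gamma_1 n}$, and $\gamma_1=20\gamma_0\kappa k$ so $(2k+1)\gamma_1 > 2\gamma_0$), we find that for \emph{every} source cell, the cells $\W^k_{r,\eta}$ with which it fails to be nice have total $\mu$-measure at most, say, $2A_3 d^{-\ell}+\dots\lesssim d^{-5\gamma_0 n}$ or so.

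Now invert the order of summation. Let $\Bc$ be the set of admissible unsafe source–target pairs $(\W^{k,j}_{r,\eta'},\W^k_{r,\eta})$. By the previous paragraph, for each fixed $\eta'$ the $\mu$-sum over bad $\eta$'s is $\lesssim d^{-c\gamma_0 n}$ with $c$ a concrete constant $>2$; since there are $O(r^{-2k}M)=O(d^{2k\gamma_1 n})$ source cells, the total $\mu$-weighted count is $\lesssim d^{(2k\gamma_1-c\gamma_0)n}$. On the other hand each admissible unsafe target $\W^k_{r,\eta}$ contributes, by definition of unsafe, source cells of total $\mu$-measure $>d^{-\gamma_0 n}$ — but that is the wrong direction for a clean double count. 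Instead I would weight the pairs by $\mu(\W^k_{r,\eta})$ on the target side and by the plain count on the source side: $\sum_{\text{bad pairs}}\mu(\W^k_{r,\eta}) = \sum_{\eta'}\big(\text{$\mu$-mass of bad targets for }\eta'\big)\lesssim d^{2k\gamma_1 n}\cdot d^{-c\gamma_0 n}$, while each unsafe target $\W^k_{r,\eta}$ is paired with at least (number of source cells of $\mu$-mass $>d^{-\gamma_0 n}$), hence with at least one source cell, so $\sum_{\text{unsafe }\eta}\mu(\W^k_{r,\eta})\leq \sum_{\text{bad pairs}}\mu(\W^k_{r,\eta})\lesssim d^{(2k\gamma_1-c\gamma_0)n}$. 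Choosing the constants as in the paragraph preceding the proposition — recall $\gamma_1=20\gamma_0\kappa k$ and the constraints $800\gamma_0\kappa k^2<\zeta$, which in particular make $2k\gamma_1=40\gamma_0\kappa k^2$ small against the losses — and being slightly more careful with the exponent $c$, one gets the exponent to beat $2\gamma_0 n$, yielding the bound $A_7 d^{-2\gamma_0 n}$. The main obstacle is bookkeeping the exponents: one must verify that the ``niceness slack'' $d^{-2\gamma_0 n}\mu(\W^k_{r,\eta})+r^{2k+1}$ in Definition~\ref{d:good-cell}(2) genuinely absorbs all the error terms ($A_6 r^{2k+2}$ from Corollary~\ref{c:orbit-cell}, the $A_3 d^{-\ell}d^{km}$ branch deficit from Proposition~\ref{p:inverse}, and the diameter/inclusion losses from passing between $\B$, $\vartheta_0\B$ and the cell), and then that the final double-counting exponent, involving the number $O(d^{2k\gamma_1 n})$ of cells times the per-cell bad $\mu$-mass, still comes out smaller than $d^{-2\gamma_0 n}$ — which is exactly why $\gamma_1$ and $\gamma_0$ were tied together by $\gamma_1=20\gamma_0\kappa k$ and $\zeta$ chosen with $800\gamma_0\kappa k^2<\zeta$.
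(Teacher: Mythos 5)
Your treatment of the non-admissible cells is fine and matches the paper, and your per-source estimate is essentially correct: for a fixed admissible source cell, combining the branch deficit $A_3 d^{-\floor{10\gamma_0 n}}d^{k\ceil{(1-\zeta)n}}$ from Proposition \ref{p:inverse} with the first inequality of Corollary \ref{c:orbit-cell} and the slack in Definition \ref{d:good-cell}(2) does show that the targets with which that source fails to be nice have total $\mu$-mass $\lesssim d^{-8\gamma_0 n}$. The gap is in the conversion from this per-source statement to the per-target statement that defines unsafeness. Your double count sums the per-source bound \emph{unweighted} over all source cells and uses only ``each unsafe target is paired with at least one bad source.'' The number of source cells is of order $r^{-2k}=d^{2k\gamma_1 n}=d^{40\kappa k^{2}\gamma_0 n}$, while the per-source bound can never be better than $d^{-(10-2)\gamma_0 n}$ (it is capped by the deficit exponent $10\gamma_0$ minus the slack exponent $2\gamma_0$). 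Hence your final exponent $2k\gamma_1-c\gamma_0=(40\kappa k^{2}-c)\gamma_0$ with $c\le 8$ is strictly \emph{positive} (recall $\kappa\ge 1$, $k\ge1$), so the bound $d^{(2k\gamma_1-c\gamma_0)n}$ diverges instead of beating $d^{-2\gamma_0 n}$. The constraint $800\gamma_0\kappa k^{2}<\zeta$ makes $\gamma_1$ small against $\zeta$, not against $\gamma_0$: by construction $\gamma_1=20\gamma_0\kappa k$, so the cell count always dwarfs the branch deficit, and no retuning of $\gamma_0,\gamma_1,\ell$ can rescue an unweighted sum over sources (increasing the postcritical order $\ell$ forces $\gamma_1$, hence the cell count, to grow proportionally).

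The paper's proof fixes exactly this by weighting each bad pair by the $\mu$-measure $m'_{s'}$ of the \emph{source} cell: it bounds $\sum_{s,s'}\sigma(s,s')m'_{s'}\lesssim d^{-10\gamma_0 n}d^{k\ceil{(1-\zeta)n}}$, using $\sum_{s'}m'_{s'}\le 1$ in place of the cell count, where $\sigma(s,s')$ counts preimage points of the source center in $\fW_s$ not carrying a good branch. On the target side it uses the unsafe condition \emph{quantitatively}: for a $j$-unsafe target $\fW_s$, the not-nice sources have total mass at least $d^{-2\gamma_0 n}$, and each not-nice pair satisfies $\sigma(s,s')\ge d^{-2\gamma_0 n}m_s d^{k\ceil{(1-\zeta)n}}$, so $\sum_{s'}\sigma(s,s')m'_{s'}\ge d^{-4\gamma_0 n}m_s d^{k\ceil{(1-\zeta)n}}$; assuming the unsafe targets carry mass $\ge d^{-2\gamma_0 n}$ then yields a lower bound $d^{-6\gamma_0 n}d^{k\ceil{(1-\zeta)n}}$, contradicting the $d^{-10\gamma_0 n}$ upper bound. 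So the ingredients you invoke are the right ones, but the bookkeeping you flag as ``the main obstacle'' is precisely where your argument breaks, and the repair requires replacing the source count by the source measure and using the full mass $>d^{-\gamma_0 n}$ (or $d^{-2\gamma_0 n}$ in the per-chart version) of bad sources rather than ``at least one.''
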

\begin{proof}
Recall that we only need to consider $n$ large enough and often use this property to absorb some constants and simplify the notation.
Since the non admissible cells  are contained in $U'$, their total $\mu$ measure is bounded by a constant times $d^{-10\gamma_0 n}$. Thus, we only need to consider the admissible ones.
For $1\leq j\leq M$,  we say that an admissible cell $\W_{r,\eta}^{k}$ in $\frac12 \Omega$ is {\it $j$-safe} if the union of all cells in $\frac12 \Omega_j$  which are not nice with $\W_{r,\eta}^{k}$ have a  $\mu$ measure bounded by $d^{-2\gamma_0 n}$.  Notice that there is here a factor 2 in the power.
We want to show that the union of all $j$-unsafe cells are of measure bounded by a constant times $d^{-2\gamma_0 n}$.   Since we have a fixed number $M$ of charts $\Omega_j$,  it is enough to prove the last estimate for a fixed index $j$.

From now on, we fix $1\leq j\leq M$.  In order to count the inverse branches, it is convenient to denote by $\fW_1,\ldots,\fW_N$ the admissible cells of $\frac12 \Omega$,  and $m_1,\ldots,m_N$ their $\mu$ measures.
Denote by $\fW_1',\ldots,\fW_{N'}'$ the admissible cells of $\frac12 \Omega_j$, $a_1',\ldots,a_{N'}'$ their centers and $m_1',\ldots,m_{N'}'$ their $\mu$ measures.

Define $\I:=\{1,\ldots,N\}$ and $\I':=\{1,\ldots,N'\}$.
Consider the function $\sigma:\I\times \I'\to \N$ with $\sigma(s,s')$ the number of points $b$ in $f^{-\ceil{(1-\zeta)n}}(a_{s'}')\cap \fW_s$ such that 
$b$ is not associated to an inverse branch of order $\ceil{(1-\zeta)n}$ on $\fW'_{s'}$ which has a diameter smaller than $d^{-{1-2\zeta\over 2}n}$. 

We apply Proposition \ref{p:inverse} to $\floor{10\gamma_0 n}$ and $\ceil{(1-\zeta)n}$ instead of $\ell$ and $m$.  We also choose $\B$ of center $a'_{s'}$ and radius $k \vartheta_0^{-1} r$ so that $\vartheta_0 \B$ contains $\fW'_{s'}$. Observe that the inverse branches given by that proposition have diameters bounded by
$$A_3 d^{-{1\over 2} \big(\ceil{(1-\zeta)n}-\floor{10\gamma_0n} \big)}  \leq d^{-{1-2\zeta\over 2} n}.$$
Here, we used our choices of $\zeta, \gamma_0$ and the fact that $n$ is large.  Therefore, these inverse branches do not contribute to the counting function $\sigma$.
In summary, Proposition \ref{p:inverse} gives us
$$\sum_s \sigma(s,s') \leq A_3 d^{-\floor{10\gamma_0 n}} d^{k\ceil{(1-\zeta)n}}.$$
It follows that
$$\sum_{s,s'} \sigma(s,s')  m_{s'}' \lesssim d^{-10\gamma_0 n} d^{k\ceil{(1-\zeta)n}} \sum_{s'} m_{s'}' \leq d^{-10\gamma_0 n} d^{k\ceil{(1-\zeta)n}},$$
where we have used that $\sum_{s'} m_{s'}' \leq 1$ since these numbers are the $\mu$ measures of disjoint cells inside $\P^k$.

Now, let $S\subset \{1,\ldots,N\}$ be the set of indexes $s$ such that $\fW_s$ is $j$-unsafe.  Assume by contradiction that the total $\mu$ measure of these cells is larger than $d^{-2\gamma_0 n}$,  
that is,  $\sum_{s\in S} m_s \geq d^{-2\gamma_0 n}$.  
From the definition of $r$, we see that $d^{-{1-2\zeta\over 2}n}<r^2$, so all inverse branches with $b\in (1-2r)\fW_s$ whose images are not contained in $(1-r)\fW_s$, contribute to the counting function $\sigma$.  Consider an $s\in S$ and an index $s'$ such that $\fW'_{s'}$ is not nice with $\fW_s$.  Recall that we are using $m=\ceil{(1-\zeta)n}$ which satisfies $d^{-m}<r^{30k}$ by the definition of $r$.  Then,  we can apply the first inequality in Corollary \ref{c:orbit-cell} for $m=\ceil{(1-\zeta)n}$, for $a$ the center of $\fW'_{s'}$ and for $\fW_s$ instead of $\W^{k,j}_{r,\eta}$. Using the notation in that corollary and in Definition \ref{d:good-cell}, we have 
$$ \sigma(s,s')\geq p-q_{r,\eta}\geq d^{-2\gamma_0 n}m_sd^{k\ceil{(1-\zeta)n}} ,$$
where we used that $n$ is big,  so $r$ is small and $A_6 r^{2k+2} d^{k\ceil{(1-\zeta)n}} \leq r^{2k+1} d^{k\ceil{(1-\zeta)n}}$.
Since $\fW_s$ is $j$-unsafe,  if $S'_s\subset \{1,\ldots,N'\}$ is the set of indexes $s'$ such that $\fW'_{s'}$ is not nice with $\fW_s$, then
$$\sum_{s'} \sigma(s,s')  m'_{s'} \geq d^{-2\gamma_0 n}m_sd^{k\ceil{(1-\zeta)n}} \sum_{s'\in S'_s} m_{s'}'  \geq d^{-4\gamma_0 n} m_s d^{k\ceil{(1-\zeta)n}} .$$ 
It follows that 
$$\sum_{s\in S,s'}   \sigma(s,s')  m'_{s'}  \geq \sum_{s\in S} d^{-4\gamma_0 n} m_s d^{k\ceil{(1-\zeta)n}} \geq d^{-6\gamma_0 n} d^{k\ceil{(1-\zeta)n}} $$ 
which contradicts the above estimates.  This proves that the total $\mu$ measure of $j$-unsafe cells is at most $d^{-2\gamma_0 n}$ and concludes the proof of the proposition.
\end{proof}

\begin{lemma} \label{l:safe}
There is a constant $A_8>0$ such that each safe cell $\W_{r,\eta}^k$ admits at least 
$$p_{r,\eta}:=\big[\mu\big((1-3r)\W_{r,\eta}^{k}\big) -d^{-2\gamma_0 n}\mu\big(\W_{r,\eta}^{k}\big) - r^{2k+1}\big](1-A_8d^{-\gamma_0n}) d^{kn}$$
inverse branches of order $n$ with images in $(1-r)\W_{r,\eta}^k$ and with diameters less than $d^{-{1-2\zeta\over 2}n}$.
\end{lemma}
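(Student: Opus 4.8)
The plan is to go from inverse branches of order $\ceil{(1-\zeta)n}$ (the order for which ``niceness'' is defined) to inverse branches of order $n$ by composing with inverse branches of order $m:=n-\ceil{(1-\zeta)n}=\floor{\zeta n}$ produced by Proposition~\ref{p:inverse}. The key point is that a safe cell $\W_{r,\eta}^k$ receives, by definition, at least $q_{r,\eta}$ inverse branches of order $\ceil{(1-\zeta)n}$ with images in $(1-r)\W_{r,\eta}^k$ and small diameters \emph{from each} cell $\W_{r,\eta'}^{k,j}$ that is nice with it, and the non-nice cells carry total $\mu$-measure at most $d^{-\gamma_0 n}$. Since the cells $\W_{r,\eta'}^{k,j}$ inside the charts $\frac12\Omega_j$ cover $\P^k\setminus U'$ (by the choice $r=d^{-\gamma_1 n}$) and in particular cover $J_k$ up to the set $U'$ of $\mu$-measure $\leq d^{-10\gamma_0 n}$, a ``most of $\P^k$'' argument will let us ignore the non-admissible and non-nice cells at the cost of the extra factor $(1-A_8 d^{-\gamma_0 n})$.

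\textbf{Step 1: pulling back by $f^{\floor{\zeta n}}$.} First I would fix a safe cell $\W^k_{r,\eta}$ with center $a_\eta$. For each admissible cell $\W^{k,j}_{r,\eta'}$ that is nice with $\W^k_{r,\eta}$, its center $a'_{\eta'}$ lies outside $U=\Tub(V,10k\vartheta_0^{-1}d^{-\gamma_1 n})$ with $V=\PC_{\floor{10\gamma_0 n}}$, hence the ball $\B$ of center $a'_{\eta'}$ and radius $k\vartheta_0^{-1}r$ is disjoint from $\PC_{\floor{10\gamma_0 n}}$ and satisfies $\vartheta_0\B\supset \W^{k,j}_{r,\eta'}$. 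Apply Proposition~\ref{p:inverse} with $\ell=\floor{10\gamma_0 n}$ and $m=\floor{\zeta n}$: the cell $\W^{k,j}_{r,\eta'}$ admits at least $(1-A_3 d^{-10\gamma_0 n})d^{k\floor{\zeta n}}$ inverse branches of order $\floor{\zeta n}$ of diameter $\leq A_3 d^{-\frac{\floor{\zeta n}-\floor{10\gamma_0 n}}{2}}$, which, using $800\gamma_0\kappa k^2<\zeta$ and $n$ large, is much smaller than $r=d^{-\gamma_1 n}$. Composing each of the $q_{r,\eta}$ order-$\ceil{(1-\zeta)n}$ branches landing in $(1-r)\W^k_{r,\eta}$ with each such order-$\floor{\zeta n}$ branch of $\W^{k,j}_{r,\eta'}$ produces inverse branches of order $n$ of $f$ on $\W^k_{r,\eta}$ with images in $(1-r)\W^k_{r,\eta}$. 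The diameter is bounded by summing the two contributions: the outer branch has image of diameter $<d^{-\frac{1-2\zeta}{2}n}$, and under the order-$\ceil{(1-\zeta)n}$ map this image is pulled back into a set of diameter at most (diameter of outer branch's image)$\,\cdot\,$(Lipschitz constant of the inner branch), but more cleanly one just notes that the composed branch's image sits inside one order-$\floor{\zeta n}$ branch image, whose diameter is $\ll r$, and iterating the standard estimate (as in Proposition~\ref{p:inverse}, cf.\ \cite[Lemma~1.55]{dinh-sibony:cime}) gives diameter $<d^{-\frac{1-2\zeta}{2}n}$ for $n$ large.

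\textbf{Step 2: counting and summing over nice cells.} Each nice cell contributes $q_{r,\eta}\cdot(1-A_3 d^{-10\gamma_0 n})d^{k\floor{\zeta n}}$ branches of order $n$, and branches coming from distinct cells $\W^{k,j}_{r,\eta'}$ (or distinct inner/outer branches) have disjoint images, hence are distinct. The total count is therefore at least
$$q_{r,\eta}\,(1-A_3 d^{-10\gamma_0 n})\,d^{k\floor{\zeta n}}\cdot \#\{\text{nice cells}\}.$$
Now I would bound the number of nice cells from below by the total number of admissible cells inside the charts $\frac12\Omega_j$ minus the number of non-nice ones, weighting by $\mu$-measure is the wrong normalization here — instead the cleanest route is: the order-$\ceil{(1-\zeta)n}$ branches already encode the full multiplicity $d^{k\ceil{(1-\zeta)n}}$ preimage structure over $a_\eta$, and what one really wants is the total number of order-$n$ preimages of $a_\eta$ in $(1-r)\W^k_{r,\eta}$ coming through \emph{admissible} cells. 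Since the non-admissible cells lie in $U'$ and the non-nice admissible cells have $\mu$-measure $\leq d^{-\gamma_0 n}$, Corollary~\ref{c:orbit-cell} (applied with $m=\floor{\zeta n}$, $d^{-m}<r^{30k}$) converts these $\mu$-measure bounds into bounds on the number of order-$\floor{\zeta n}$ preimages landing in the bad cells, namely at most a constant times $(d^{-\gamma_0 n}+d^{-10\gamma_0 n}+r^{2k+2}\cdot r^{-2k})d^{k\floor{\zeta n}}\lesssim d^{-\gamma_0 n}d^{k\floor{\zeta n}}$ of the $d^{k\floor{\zeta n}}$ preimages. Hence for each of the $q_{r,\eta}$ outer branches, at least $(1-A d^{-\gamma_0 n})d^{k\floor{\zeta n}}$ of its order-$\floor{\zeta n}$ pullbacks pass through a nice admissible cell and give a valid order-$n$ branch, yielding
$$\#\{\text{order-}n\text{ branches}\}\ \geq\ q_{r,\eta}\,(1-A d^{-\gamma_0 n})\,d^{k\floor{\zeta n}}.$$
Finally, $q_{r,\eta}\,d^{k\floor{\zeta n}}=\big[\mu((1-3r)\W^k_{r,\eta})-d^{-2\gamma_0 n}\mu(\W^k_{r,\eta})-r^{2k+1}\big]d^{k\ceil{(1-\zeta)n}}d^{k\floor{\zeta n}}=\big[\cdots\big]d^{kn}$, which is exactly $p_{r,\eta}/(1-A_8 d^{-\gamma_0 n})$ with $A_8:=A$, giving the claim.

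\textbf{Main obstacle.} The delicate point is Step~2: making rigorous the passage from ``non-nice cells have small $\mu$-measure'' to ``few of the $d^{k\floor{\zeta n}}$ order-$\floor{\zeta n}$ pullbacks of a given outer branch's base point are wasted'', because one must use the quantitative equidistribution of $(f^{\floor{\zeta n}})^*(\delta_{a_\eta})$ (via Corollary~\ref{c:orbit-cell}) to compare a count of preimages with a $\mu$-measure, and the various error terms $d^{-2\gamma_0 n}$, $r^{2k+1}=d^{-(2k+1)\gamma_1 n}$, $A_6 r^{2k+2}d^{k\floor{\zeta n}}$ must all be absorbed into the single factor $(1-A_8 d^{-\gamma_0 n})$; this is where the inequalities $800\gamma_0\kappa k^2<\zeta$, $\gamma_1=20\gamma_0\kappa k$ and $n$ large are used. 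The diameter bookkeeping in Step~1 is routine given \cite[Lemma~1.55]{dinh-sibony:cime} and the choices of constants.
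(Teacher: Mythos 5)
Your overall skeleton (split $n=\floor{\zeta n}+\ceil{(1-\zeta)n}$, use the niceness branches for the long piece, Proposition \ref{p:inverse} for the short piece, and safety plus Corollary \ref{c:orbit-cell} to absorb the losses into $(1-A_8d^{-\gamma_0 n})$) is the right one, but Step 1 has a genuine gap: you apply Proposition \ref{p:inverse} with $m=\floor{\zeta n}$ to balls centered at the \emph{nice} cells $\W^{k,j}_{r,\eta'}$, so the order-$\floor{\zeta n}$ branches you produce have the nice cell as their domain, and you then claim that composing them with the order-$\ceil{(1-\zeta)n}$ niceness branches (whose domain is also the nice cell, with image in $(1-r)\W^k_{r,\eta}$) yields order-$n$ branches on the safe cell. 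That composition is not defined: the image of a niceness branch lies in the safe cell, not in the domain of your order-$\floor{\zeta n}$ branch, while the image of your order-$\floor{\zeta n}$ branch is a far-away preimage of the nice cell, not in the domain of the niceness branch; and in any case neither composite would have $\W^k_{r,\eta}$ as its domain. What is needed (and what the paper does) is to apply Proposition \ref{p:inverse} with $\ell=\floor{10\gamma_0 n}$, $m=\floor{\zeta n}$ to a ball centered at the center $a$ of the \emph{safe} cell — this is where admissibility of $\W^k_{r,\eta}$ enters — obtaining branches of order $\floor{\zeta n}$ defined on the whole safe cell with diameter $<r^2$; one keeps those whose point $b=g(a)\in f^{-\floor{\zeta n}}(a)$ lies in $(1-2r)\W^{k,j}_{r,\eta'}$ for a nice cell and outside the extended street networks, so that the small diameter forces $g(\W^k_{r,\eta})$ into that single nice cell, and only then can one compose with each of its $q_{r,\eta}$ niceness branches. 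Note also that the diameter bound $<d^{-\frac{1-2\zeta}{2}n}$ for the composed branch is then immediate, since its image sits inside the image of a niceness branch; your alternative bookkeeping (image inside an order-$\floor{\zeta n}$ branch image, ``iterating the standard estimate'') would not give it, as such an image only has diameter of order $d^{-\frac{\zeta}{2} n}\gg d^{-\frac{1-2\zeta}{2}n}$.

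Your Step 2 drifts toward the correct count of good preimages $b$ of $a$, but it still omits two of the necessary discards: the preimages of $a$ lying in the street networks $3\bbS^j_r\cap\frac12\Omega_j$, which are not in any $(1-2r)$-part of a cell and must be bounded by Corollary \ref{c:orbit-street} (never invoked in your proposal — this is precisely why the good Manhattans of Lemma \ref{l:Manhattan} were arranged), and the preimages not associated to a small-diameter branch over the safe cell, i.e.\ the $A_3 d^{-\floor{10\gamma_0 n}}d^{k\floor{\zeta n}}$ exceptional count of Proposition \ref{p:inverse}, which in your write-up is attached to branches over the wrong cells. A point $b$ landing in a nice cell does not by itself furnish a branch defined on all of $\W^k_{r,\eta}$ with image in that cell; this is exactly the ingredient your construction is missing. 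Once these are fixed, the multiplication $\#\{\text{good } b\}\cdot q_{r,\eta}\geq (1-A_8 d^{-\gamma_0 n})d^{k\floor{\zeta n}}q_{r,\eta}=p_{r,\eta}$ concludes as you indicate.
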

\begin{proof}
We only need to consider $n$ large enough. Denote by $a$ the center of $\W_{r,\eta}^k$. Consider the set $R$ of points $b$ in $f^{-\floor{\zeta n}}(a)$ associated to inverse branches of order $\floor{\zeta n}$ on $\W_{r,\eta}^k$ with images in cells which are nice with $\W_{r,\eta}^k$. Observe that every point $b$ in $f^{-\floor{\zeta n}}(a)$ belongs to $R$ unless one of the following properties holds:
\begin{enumerate}
\item $b$ belongs to $3\bbS_r^j\cap {1\over 2}\Omega_j$ for some $j$;
\item $b$ belongs to $(1-2r)\W_{r,\eta}^{k,j}$ for some cell $\W_{r,\eta}^{k,j}$ which is not nice with $\W_{r,\eta}^k$;
\item $b$ belongs to $(1-2r)\W_{r,\eta}^{k,j}$ for some cell $\W_{r,\eta}^{k,j}$ which is nice with $\W_{r,\eta}^k$ but it is not associated to an inverse branch on 
$\W_{r,\eta}^{k,j}$ described in Proposition \ref{p:inverse} for $m=\floor{\zeta n}$ and  $\ell=\floor{10\gamma_0n}$.
\end{enumerate}
Note that the inverse branches from Proposition \ref{p:inverse} mentioned in (3) have diameters bounded by 
$$A_3 d^{-{1\over 2} \big(\floor{\zeta n}- \floor{10\gamma_0n}\big)} <r^2$$
because $n$ is large. Therefore, such a branch has image in $(1-r)\W_{r,\eta}^{k,j}$ which implies that $b\in R$. In (3), we ask $b$ not to be in this case.

Since $n$ is large, it is not difficult to check that $d^{-\floor{\zeta n}} <r^{30k}$.  Corollary \ref{c:orbit-street} implies that the number of $b$ satisfying (1) is bounded by $A_5 r d^{k\floor{\zeta n}}$.  We can also apply Corollary \ref{c:orbit-cell} for $m=\floor{\zeta n}$.  The second inequality in that corollary 
 implies that the number of $b$ satisfying (2) is bounded by 
$$\Big[\sum \Big( \mu(\W_{r,\eta}^{k,j})+A_6 r^{2k+2}\Big) \Big]d^{k\floor{\zeta n}},$$ 
where we only consider cells which are not nice with $\W_{r,\eta}^{k}$. 
Since $\W_{r,\eta}^k$ is safe, these cells have a total $\mu$ measure at most $d^{-\gamma_0 n}$.
Recall also that the total number of cells is $O(r^{-2k})$. We conclude that  the number of $b$ satisfying (2) is bounded by a constant times
$$\big[d^{-\gamma_0 n} + r^2\big] d^{k\floor{\zeta n}} \lesssim d^{-\gamma_0 n}  d^{k\floor{\zeta n}}.$$

By Proposition \ref{p:inverse}, the number of $b$ satisfying (3) is bounded by $A_3 d^{-10\gamma_0 n}  d^{k\floor{\zeta n}}$. We conclude that the set $R$ contains at least 
$(1-A_8d^{-\gamma_0 n}) d^{k\floor{\zeta n}}$ points for some constant $A_8>0$. As an inverse branch of order $n$ is the composition of an inverse branch of order $\floor{\zeta n}$ and an inverse branch of order $\ceil{(1-\zeta)n}$, the lemma follows from the definition of nice cells.
\end{proof}

\begin{lemma} \label{l:fixed}
Let $g:r\W^k\to (1-r)r\W^k$ be a holomorphic map whose image has diameter at most $d^{-{1-2\zeta\over 2}n}$. Then $g$ admits a unique fixed point $a$. Moreover, $a$ is attracting for $g$ and $\|Dg(a)\|\leq A_9 r^{-2} d^{-{1-2\zeta\over 2}n}$ for some constant $A_9>0$ independent of $r$ and $g$.  Futhermore,  for any point $a'\in r\W^k$, the sequence $g^n(a')$ converges to $a$.
\end{lemma}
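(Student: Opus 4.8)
The plan is to use the contraction/normal families principle coming from the fact that $g$ maps the cube strictly inside itself. First I would observe that $g$ sends $r\W^k$ into the relatively compact subset $(1-r)r\W^k$, so the iterates $g^m$ form a normal family on $r\W^k$; any limit is a holomorphic map $r\W^k\to \overline{(1-r)r\W^k}$. The existence of a fixed point follows from a standard Earle--Hamilton / Denjoy--Wolff type argument, but here the situation is even simpler because the image is tiny: since $\diam(g(r\W^k))\leq d^{-\frac{1-2\zeta}{2}n}$, we have $g(r\W^k)\subset B(c,\rho)$ for $c:=g(\text{center})$ and $\rho:=d^{-\frac{1-2\zeta}{2}n}$, and for $n$ large this ball is compactly contained in $(1-r/2)r\W^k$ because $d^{-\frac{1-2\zeta}{2}n}<r^2$ by the choice of $r$. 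Restricting $g$ to this small ball, $g$ maps it into a much smaller ball, so I can compare with a Euclidean ball and apply the Schwarz lemma on a polydisc, or simply invoke Kobayashi hyperbolicity: the Kobayashi distance on $r\W^k$ restricted to $g(r\W^k)$ is comparable to the Euclidean distance with a multiplicative constant of order $r^{-2}$ (here I use that the Kobayashi metric of $(1-r)r\W^k$ at a point at Euclidean distance $\gtrsim r\cdot r$ from the boundary is $\lesssim 1/r^2$), while $g$ does not increase Kobayashi distance. Hence $g:(g(r\W^k),\mathrm{dist})\to(g(r\W^k),\mathrm{dist})$ strictly contracts Euclidean distances by a factor $\leq A_9 r^{-2}d^{-\frac{1-2\zeta}{2}n}$, which is $<1$ for $n$ large.

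Concretely I would argue as follows. Let $K:=\overline{g(r\W^k)}$, a compact convex-ish set of diameter $\leq \rho$ inside $(1-r/2)r\W^k$. Decreasing Schwarz/Kobayashi on $r\W^k$: for $x,y\in K$,
$$\|g(x)-g(y)\| \;\le\; C\,\mathrm{dist}^{\mathrm{Kob}}_{r\W^k}\big(g(x),g(y)\big)\;\le\; C\,\mathrm{dist}^{\mathrm{Kob}}_{r\W^k}(x,y)\;\le\; \frac{C'}{r^2}\,\|x-y\|,$$
where the first inequality uses that on the bounded region $K$ the Euclidean and Kobayashi metrics of $r\W^k$ are comparable up to a constant, the second is the distance-decreasing property of holomorphic maps for the Kobayashi metric, and the third uses that $K$ sits at Euclidean distance $\gtrsim r^2$ from $\partial(r\W^k)$ so the Kobayashi metric there is $\lesssim 1/r^2$ in Euclidean terms. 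Since $\|x-y\|\le\rho=d^{-\frac{1-2\zeta}{2}n}$ and $C'/r^2\cdot\rho = C' r^{-2}d^{-\frac{1-2\zeta}{2}n}\to 0$, for $n$ large $g|_K$ is a strict contraction of the complete metric space $(K,\|\cdot\|)$ into itself. By Banach's fixed point theorem $g$ has a unique fixed point $a\in K\subset r\W^k$, and uniqueness in all of $r\W^k$ follows since every point of $r\W^k$ is mapped into $K$ after one step. The derivative bound $\|Dg(a)\|\le A_9 r^{-2}d^{-\frac{1-2\zeta}{2}n}$ comes from the same contraction estimate applied infinitesimally (or from Cauchy estimates on the ball $B(a,r^2)\subset r\W^k$ together with the bound $\diam(g(B(a,r^2)))\le \rho$). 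Attractivity of $a$ and convergence of $g^m(a')$ to $a$ for every $a'\in r\W^k$ are then immediate from the contraction: $\|g^{m+1}(a')-a\|\le (C'r^{-2}\rho)^m\|g(a')-a\|\to 0$.

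The main obstacle, and the only point requiring care, is making the comparison between the Kobayashi (or Carathéodory) metric of $r\W^k$ and the Euclidean metric quantitatively correct near $K$, with the explicit loss $r^{-2}$; the factor is dictated by the fact that $K$ can be as close as $\sim r^2$ (not $\sim r$) to the boundary of the cell $r\W^k$, because we only know $g(r\W^k)\subset(1-r)r\W^k$ and its image has diameter up to $r^2$. Once one works chartwise on the product structure $r\W^k=(r\W)^k$ and uses the one-dimensional estimate for the disc — the Poincaré metric of $r\W$ at distance $s$ from the boundary is of order $1/s$ — this is routine; but one has to be sure that after applying $g$ the image genuinely lies at distance $\gtrsim r^2$ from the boundary, which is exactly what $d^{-\frac{1-2\zeta}{2}n}<r^2$ (the defining inequality for $r=d^{-\gamma_1 n}$ with $n$ large, since $\gamma_1<1/(80k)$) guarantees. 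Everything else is a direct application of Banach's fixed point theorem and Cauchy's inequalities, so no genuinely new idea is needed beyond the Kobayashi-contraction bookkeeping.
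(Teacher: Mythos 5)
Your overall strategy (small image, contraction, Banach fixed point, Cauchy/Schwarz for the derivative) is sound, but the one step you actually write down to produce the contraction does not hold as stated. Your displayed chain only yields $\|g(x)-g(y)\|\le C' r^{-2}\|x-y\|$ on $K$, and $C'r^{-2}\gg 1$, so it is not a contraction; combining it with $\|x-y\|\le\rho:=d^{-\frac{1-2\zeta}{2}n}$ bounds the displacement $\|g(x)-g(y)\|$ by $C'r^{-2}\rho$, not the ratio $\|g(x)-g(y)\|/\|x-y\|$. More fundamentally, the claimed contraction factor $A_9r^{-2}d^{-\frac{1-2\zeta}{2}n}$ cannot be extracted from the distance-decreasing property of the Kobayashi metric of $r\W^k$ together with Euclidean--Kobayashi comparisons on $K$: that metric knows nothing about the diameter of $g(r\W^k)$, and since $r\W^k$ itself has diameter of order $r$, the best lower bound for $\mathrm{dist}^{\mathrm{Kob}}_{r\W^k}$ in Euclidean terms is of order $\|x-y\|/r$, so any tuning of your constants gives a Lipschitz bound of order $1/r$ at best, never the crucial factor $\rho$. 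Hence the appeal to Banach's fixed point theorem and the estimate $\|g^{m+1}(a')-a\|\le (C'r^{-2}\rho)^m\|g(a')-a\|$ are unjustified as written.

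The statement you want is nevertheless true, and the fix is the Schwarz-type rescaling you yourself mention as an alternative: for $x$ in $(1-r)r\W^k$ (in particular for $x\in K$) the ball $B(x,r^2)$ is contained in $r\W^k$, and $g(B(x,r^2))\subset B(g(x),\rho)$ because the whole image of $g$ has diameter at most $\rho$; the Schwarz lemma (or Cauchy estimates) then gives $\|g(y)-g(x)\|\le C\rho\, r^{-2}\|y-x\|$, and $\rho r^{-2}=d^{-(\frac{1-2\zeta}{2}-2\gamma_1)n}\to 0$, which is the genuine contraction. With this corrected step, your Banach argument, the uniqueness, attractivity, the bound $\|Dg(a)\|\lesssim r^{-2}\rho$ and the convergence of orbits all go through. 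For comparison, the paper takes a slightly softer route: it only uses the Earle--Hamilton-type fact that $g$, having relatively compact image in the hyperbolic convex set $r\W^k$, is strictly Kobayashi-contracting, which qualitatively gives existence, uniqueness, attractivity and convergence of orbits without any explicit factor, and then obtains the derivative bound exactly by the rescaling of $B(a,r^2)\to B(a,\rho)$ and a normal-families (Schwarz) argument --- i.e.\ by the same mechanism you need to restore for your quantitative contraction.
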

\begin{proof}
The convex open set $r\W^k$ is Kobayashi hyperbolic and $g$ is strictly contracting for the Kobayashi metric on  $r\W^k$.  It follows that $g$ admits a unique fixed point $a$ which is moreover attracting. Since $a\in (1-r)r\W^k$, $g$ defines a holomorphic map from the ball of center $a$ and of radius $r^2$ to the ball of center $a$ and radius $d^{\frac{1-2\zeta}{2}n}$.

Let $\Fc$ be the family of holomorphic maps $h$  from the unit ball $\B_k$ to itself which fix the center. This family is normal and therefore $\|Dh(0)\|$ is uniformly bounded.  Applying this property to $g$ with  suitable scalings gives the desired estimate.
\end{proof}

\begin{proposition} \label{p:per}
For $n$ large enough, each safe cell $\W_{r,\eta}^k$ admits at least 
$p_{r,\eta}$ repelling periodic points $a$ of period $n$ which satisfy $\|Df^n(a)^{-1}\|\leq d^{-{1-\gamma\over 2}n}$ and belong to $J_k$. 
\end{proposition}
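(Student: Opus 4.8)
The plan is to combine Lemma~\ref{l:safe} with Lemma~\ref{l:fixed}, and then to upgrade the information about a fixed point of $f^n$ to the statement that it is repelling, lies in $J_k$, and has the claimed derivative bound. First I would recall that by Lemma~\ref{l:safe} a safe cell $\W_{r,\eta}^k$ admits at least $p_{r,\eta}$ inverse branches $g$ of order $n$ with image in $(1-r)\W_{r,\eta}^k$ and of diameter less than $d^{-\frac{1-2\zeta}{2}n}$. Each such $g$ maps $\W_{r,\eta}^k\cong r\W^k$ holomorphically into $(1-r)r\W^k$, so Lemma~\ref{l:fixed} applies and produces a unique fixed point $a=g(a)$ inside $(1-r)\W_{r,\eta}^k$, attracting for $g$, with $\|Dg(a)\|\leq A_9 r^{-2}d^{-\frac{1-2\zeta}{2}n}$. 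Since $f^n\circ g=\id$ on the cell, the chain rule gives $Df^n(g(a))\circ Dg(a)=\id$, hence $a=g(a)$ satisfies $f^n(a)=a$, i.e.\ $a$ is a periodic point of period $n$, and moreover $Df^n(a)$ is invertible with $Df^n(a)^{-1}=Dg(a)$, so $\|Df^n(a)^{-1}\|\leq A_9 r^{-2}d^{-\frac{1-2\zeta}{2}n}$. Because $r=d^{-\gamma_1 n}$ with $\gamma_1<1/(80k)$ and $\zeta<\gamma/4$, for $n$ large the factor $A_9 r^{-2}=A_9 d^{2\gamma_1 n}$ is absorbed and $A_9 d^{2\gamma_1 n}d^{-\frac{1-2\zeta}{2}n}\leq d^{-\frac{1-\gamma}{2}n}$; this gives the required bound $\|Df^n(a)^{-1}\|\leq d^{-\frac{1-\gamma}{2}n}$. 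In particular all eigenvalues of $Df^n(a)$ have modulus at least $d^{\frac{1-\gamma}{2}n}>1$, so $a$ is a repelling periodic point.

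Next I would check that these fixed points are distinct and lie in $J_k$. Distinctness is immediate: distinct inverse branches of order $n$ over a domain have disjoint images (as noted in Definition~\ref{d:good_branch} and the remark following it), so the $p_{r,\eta}$ branches give $p_{r,\eta}$ distinct periodic points in the cell. To see $a\in J_k=\supp(\mu)$, I would argue using the structure of the construction: the inverse branch $g$ was obtained in Lemma~\ref{l:safe} as a composition of an inverse branch of order $\floor{\zeta n}$ landing in a cell $\W_{r,\eta'}^{k,j}$ which is \emph{nice} with $\W_{r,\eta}^k$, followed by an inverse branch of order $\ceil{(1-\zeta)n}$ from Definition~\ref{d:good-cell}(2). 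Nice cells are admissible, hence have centers outside $U=\Tub(\PC_{\floor{10\gamma_0 n}},\dots)$; the quantity $q_{r,\eta}$ is positive precisely when $\mu((1-3r)\W_{r,\eta}^k)$ is not too small, so the cells in play must carry positive $\mu$-mass. A clean way to conclude $a\in J_k$ is to invoke the standard fact (from Briend--Duval theory, see \cite{briend-duval:acta, dinh-sibony:cime}) that a repelling periodic point obtained as the fixed point of a contracting inverse branch whose image meets $\supp(\mu)$ lies in $J_k$; alternatively one can note that iterating $f^{-n}$ along $g$ starting from any point of $\W_{r,\eta}^k\cap J_k$ converges to $a$ by the last assertion of Lemma~\ref{l:fixed}, and $J_k$ is closed and totally invariant, forcing $a\in J_k$. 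One must only ensure that the relevant cells do meet $J_k$, which follows because $q_{r,\eta}>0$ (so $\W_{r,\eta}^k$ has positive $\mu$-mass, hence meets $\supp\mu$) for the safe cells contributing here.

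The main obstacle is the bookkeeping of the derivative estimate: one must verify that the loss coming from the Kobayashi-metric argument in Lemma~\ref{l:fixed}, namely the factor $r^{-2}=d^{2\gamma_1 n}$, together with the $\zeta$-loss in the exponent $\frac{1-2\zeta}{2}$ rather than $\frac12$, is dominated by the gap between $\frac{1-2\zeta}{2}$ and $\frac{1-\gamma}{2}$. This is exactly why the constants were chosen with $\zeta<\gamma/4$ and $\gamma_1=20\gamma_0\kappa k<1/(80k)$ small: then $2\gamma_1+\frac{2\zeta}{2}<\frac{\gamma}{2}$ for $n$ large, so $d^{2\gamma_1 n}\cdot A_9\cdot d^{-\frac{1-2\zeta}{2}n}\leq d^{-\frac{1-\gamma}{2}n}$. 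A secondary subtlety is confirming that the inverse branch image actually lies in the smaller cube $(1-r)r\W^k$ rather than merely in $r\W^k$, so that Lemma~\ref{l:fixed} applies with a genuine contraction and an interior fixed point; this is guaranteed by the requirement ``images in $(1-r)\W_{r,\eta}^k$'' built into Lemma~\ref{l:safe}. Putting these together yields, for each safe cell, at least $p_{r,\eta}$ distinct repelling periodic points of period $n$ in $(1-r)\W_{r,\eta}^k\subset\W_{r,\eta}^k$, lying in $J_k$ and satisfying $\|Df^n(a)^{-1}\|\leq d^{-\frac{1-\gamma}{2}n}$, which is the assertion of Proposition~\ref{p:per}.
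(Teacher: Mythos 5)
Your proof is correct and follows essentially the same route as the paper: apply Lemma~\ref{l:fixed} to each of the $p_{r,\eta}$ contracting inverse branches from Lemma~\ref{l:safe} to get a fixed point, obtain the derivative bound from $Df^n(a)^{-1}=Dg(a)$ together with the choices $r=d^{-\gamma_1 n}$ and $2\gamma_1+\zeta<\gamma/2$, and deduce $a\in J_k$ by iterating $g$ on a point of $\W_{r,\eta}^k\cap J_k$ using the total invariance and closedness of $J_k$. The only cosmetic difference is that the paper disposes of the trivial case $\W_{r,\eta}^k\cap J_k=\varnothing$ up front (noting $p_{r,\eta}\le 0$ there), whereas you deduce after the fact that the contributing cells meet $\supp\mu$; both observations amount to the same thing.
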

\begin{proof}
If $\W_{r,\eta}^k$ doesn't intersect $J_k$, then its $\mu$ measure is zero and $p_{r,\eta} \leq 0$,  so  the proposition is clear  in this case.    We assume then that $\W_{r,\eta}^k \cap J_k$ is non-empty.  We apply Lemma \ref{l:fixed} to each inverse branch $g: \W_{r,\eta}^k \to (1-r)\W_{r,\eta}^k$ in Lemma  \ref{l:safe} and get an attracting fixed point $a$ for $g$ which is a repelling periodic point of period $n$ for $f$. 
This point is obtained as the limit of $g^n(a')$ for any point $a'\in \W_{r,\eta}^k$. Choosing a point $a'$ in $J_k$ and using that 
$J_k$ is closed and invariant by $f^{-1}$,  we deduce that $a$ is on $J_k$. The estimate of the differential of $f^n$ also follows from Lemma  \ref{l:fixed} and the fact that 
$$A_9 r^{-2} d^{-{1-2\zeta\over 2}n} \leq d^{-{1-\gamma\over 2}n}$$
thanks to the definitions of $\zeta, r$ and the fact that $n$ is large.
\end{proof}

\section{Equidistribution of repelling periodic points} \label{s:equi}

In this section, we finish the proof of Theorem \ref{t:main}. By interpolation theory between Banach spaces \cite{triebel}, it is enough to consider the case where $\alpha=1$.  Consider a test function $\phi \in \Cc^1(\P^k)$.  Let $\chi:\R\to\R$ be such that $\chi = 0$ on $(-\infty,-1]$, $\chi(t)\geq t$ and $\chi(t)=t$ for $t\geq 1$.   Then,  we can write $\phi=\chi(\phi)- (\chi(\phi)-\phi)$ as a difference of two non-negative $\Cc^1$ functions.  Therefore, for simplicity, we can assume from now on that $\phi$ is non-negative. By multiplying $\phi$ by a constant we can also assume that $\|\phi\|_{\Cc^1}\leq 1$. Observe also that it is enough to consider $n$ big enough because otherwise the theorem is clear even when $P_{n,\gamma}$ is empty. 

\begin{lemma} \label{l:phi}
There exists a constant $A_{10}>0$ independent of $\phi$ such that for every $n\geq 0$
$$\Big\langle\frac{1}{d^{kn}}\sum_{a\in P_{n,\gamma}} \delta_a, \phi \Big\rangle \geq \langle \mu,\phi\rangle - A_{10} d^{-\gamma_0 n}.$$
\end{lemma}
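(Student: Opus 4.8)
The plan is to reduce the lower bound to a count of repelling periodic points living in safe cells, and then to use Propositions~\ref{p:per} and~\ref{p:unsafe} to control the loss. First I would note that every repelling periodic point $a$ of period $n$ produced by Proposition~\ref{p:per} lies in some safe cell $\W_{r,\eta}^k$ inside $\tfrac12\Omega$, satisfies $\|Df^n(a)^{-1}\|\le d^{-\frac{1-\gamma}{2}n}$, and lies on $J_k$; hence all such points belong to $P_{n,\gamma}$. Since $\phi\ge 0$, we therefore have
\begin{equation} \label{eq:phi-lb}
\Big\langle \frac{1}{d^{kn}}\sum_{a\in P_{n,\gamma}}\delta_a,\phi\Big\rangle
\;\ge\; \frac{1}{d^{kn}}\sum_{\W_{r,\eta}^k\ \mathrm{safe}}\ \sum_{a}\phi(a),
\end{equation}
where the inner sum runs over the $\ge p_{r,\eta}$ periodic points in $(1-r)\W_{r,\eta}^k$ given by Proposition~\ref{p:per}. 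Because these points lie in a cube of side $\le 2r = 2d^{-\gamma_1 n}$ and $\|\phi\|_{\Cc^1}\le 1$, on each safe cell $\phi(a)$ differs from its average, and hence from $\mu$-averaged values over the cell, by at most $O(r)$; combined with $p_{r,\eta}\ge \big[\mu((1-3r)\W_{r,\eta}^k) - d^{-2\gamma_0 n}\mu(\W_{r,\eta}^k) - r^{2k+1}\big](1-A_8 d^{-\gamma_0 n})d^{kn}$, the right-hand side of \eqref{eq:phi-lb} is bounded below by
\[
\sum_{\W_{r,\eta}^k\ \mathrm{safe}} \int_{(1-3r)\W_{r,\eta}^k}\phi\,\diff\mu \;-\; O\big(d^{-\gamma_0 n}\big)\;-\;O(r),
\]
where I also used $\sum_\eta \mu(\W_{r,\eta}^k)\le 1$ and that the number of cells is $O(r^{-2k})$ to absorb the $r^{2k+1}$ and $d^{-2\gamma_0 n}$ terms (here recall $\gamma_0<1/3200$ and $\gamma_1 = 20\gamma_0\kappa k$, so $r = d^{-\gamma_1 n}$ is far smaller than $d^{-\gamma_0 n}$).

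Next I would restore the sum over all cells. The cells $\W_{r,\eta}^k$ inside $\tfrac12\Omega$ together with the street network $\bbS_r$ and the complement $\P^k\setminus\tfrac12\Omega$ account for all of $\P^k$; by Lemma~\ref{l:Manhattan} the $\mu$-mass of $\bbS_r\cap\Omega$ is $\le 30kr$, and the interior shrinkage from $\W_{r,\eta}^k$ to $(1-3r)\W_{r,\eta}^k$ costs another $O(r)$ of $\mu$-mass across all cells. The essential point is that the cells we are missing are the \emph{unsafe} ones, and Proposition~\ref{p:unsafe} bounds their total $\mu$-measure by $A_7 d^{-2\gamma_0 n}$. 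But $\tfrac12\Omega$ is only a piece of $\P^k$, so to recover $\langle\mu,\phi\rangle$ I would instead run the \emph{same} construction simultaneously over all the charts $\tfrac12\Omega_j$ — their safe cells cover $\P^k$ up to a set of $\mu$-measure $O(d^{-2\gamma_0 n} + r)$ — producing periodic points in $P_{n,\gamma}$ throughout $\P^k$; overcounting points that lie in several charts only helps the lower bound since $\phi\ge 0$. Summing the local estimates and using $\|\phi\|_\infty\le \|\phi\|_{\Cc^1}\le 1$ to bound the contribution of the exceptional set gives
\[
\Big\langle \frac{1}{d^{kn}}\sum_{a\in P_{n,\gamma}}\delta_a,\phi\Big\rangle
\;\ge\; \langle\mu,\phi\rangle - O(d^{-2\gamma_0 n}) - O(r) - O(d^{-\gamma_0 n}),
\]
and since $r = d^{-\gamma_1 n}$ with $\gamma_1 > \gamma_0$, all error terms are absorbed into $A_{10} d^{-\gamma_0 n}$ for a suitable constant $A_{10}$, for $n$ large; the finitely many small $n$ are handled by enlarging $A_{10}$, which is why we may assume $n$ large.

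The main obstacle is bookkeeping rather than any single hard estimate: one must be careful that distinct safe cells in different charts may contain the \emph{same} periodic point, so that the naive sum of $p_{r,\eta}$ over all charts could in principle overcount, but since we only need a \emph{lower} bound for $\langle \frac{1}{d^{kn}}\sum_{P_{n,\gamma}}\delta_a,\phi\rangle$ and $\phi\ge 0$, overcounting is harmless — the genuine content is that \emph{at least} $p_{r,\eta}$ distinct points of $P_{n,\gamma}$ sit in each safe cell, so that choosing one chart's safe cells (which already cover all of $J_k$ up to the controlled exceptional set) suffices. The second delicate point is checking that the $\Cc^1$-oscillation of $\phi$ on a cell, namely $O(r) = O(d^{-\gamma_1 n})$, really is negligible compared to $d^{-\gamma_0 n}$; this is exactly where the inequality $\gamma_1 = 20\gamma_0\kappa k > \gamma_0$ (which holds since $\kappa\ge 1$, $k\ge1$) is used.
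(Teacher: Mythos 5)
Your overall structure is right — Proposition~\ref{p:per} gives at least $p_{r,\eta}$ points of $P_{n,\gamma}$ in each safe cell, Proposition~\ref{p:unsafe} controls the unsafe cells, Lemma~\ref{l:Manhattan} controls the street network, and $\|\phi\|_{\Cc^1}\le 1$ controls the oscillation of $\phi$ at scale $r$ — and this is also the paper's route. However, the passage from a single chart to all of $\P^k$ has a genuine gap, and both of the ways you propose to handle it are wrong.

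Your first suggestion is to sum $p_{r,\eta}\phi_\eta$ over safe cells in \emph{all} charts and dismiss the double-counting because ``overcounting is harmless since $\phi\ge 0$.'' This is backwards. Overcounting inflates the right-hand side of the proposed inequality, not the left-hand side: if a point $a\in P_{n,\gamma}$ with $\phi(a)>0$ falls into safe cells of two different charts, it contributes roughly $2\phi(a)$ to your candidate lower bound but only $\phi(a)$ to $\langle\sum_{a\in P_{n,\gamma}}\delta_a,\phi\rangle$, so the asserted inequality becomes false, not weaker. Your fallback — ``choosing one chart's safe cells suffices because they already cover $J_k$ up to the exceptional set'' — is also not available: the safe cells of a single chart live inside $\tfrac12\Omega_{j_0}$, which is a proper subset of $\P^k$, and $J_k$ need not be contained in any one $\tfrac12\Omega_{j}$.

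The paper resolves this cleanly with a partition of unity: write $\phi=\sum_j\rho_j\phi$ with $0\le\rho_j$ supported in $\tfrac15\Omega_j$ and $\sum_j\rho_j=1$, and prove the estimate separately for each nonnegative $\Cc^1$ function $\rho_j\phi$ supported in a single chart. For each $j$, only the (disjoint) cells of the fixed chart $\Omega_j$ are used, so no double-counting occurs, and summing over $j$ recovers $\langle\mu,\phi\rangle$ on the right with a cumulative error that is still $O(d^{-\gamma_0 n})$ since $M$ is fixed. With this substitution for your multi-chart step, the rest of your argument (the expansion of $p_{r,\eta}$, the $O(r^{-2k})$ count of cells, and the inequality $\gamma_1>\gamma_0$) goes through as you describe.
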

\begin{proof}
We   use the notations of Sections \ref{s:branch} and \ref{s:periodic}.  In particular, we  use the cells $\W^k_{r,\eta}$ inside ${1\over 2}\Omega$ which cover ${1\over 10}\Omega$.
By using a partition of unity, we can assume that $\phi$ is supported by ${1\over 5}\Omega$. Denote by $\K \subset \Z^{2k}$ the set of indices $\eta$ such that $\W^k_{r,\eta}$ is safe.
Recall that the total number of cells is $O(r^{-2k})$. 
Denote by $\phi_\eta$ the infimum of $\phi$ on $\W^k_{r,\eta}$. 
Using Proposition \ref{p:per} and that $\|\phi\|_{\Cc^1} \leq1$, we have the following estimates (recall the definition of $p_{r,\eta}$ from Lemma \ref{l:safe})

\begin{eqnarray*}
\Big\langle \sum_{a\in P_{n,\gamma}} \delta_a, \phi \Big\rangle &\geq&  \sum_{\eta\in\K} p_{r,\eta} \phi_\eta \\
&=&  (1-A_8d^{-\gamma_0n}) d^{kn} \sum_{\eta\in\K} \big[\mu\big((1-3r)\W_{r,\eta}^{k}\big) -d^{-2\gamma_0 n}\mu\big(\W_{r,\eta}^{k}\big) - r^{2k+1}\big] \phi_\eta \\
& = & -(1-A_8d^{-\gamma_0n}) d^{kn} \sum_{\eta\in\K} \big[\mu\big(\W_{r,\eta}^{k})- \mu\big((1-3r)\W_{r,\eta}^{k}\big) \big) \big]\phi_\eta \\
&&  - (1-A_8d^{-\gamma_0n}) d^{kn} r^{2k+1}  \sum_{\eta\in\K} \phi_\eta \\
&&  -  (1-A_8d^{-\gamma_0n}) d^{kn} (1-d^{-2\gamma_0 n})\ \sum_{\eta\not\in\K} \mu\big(\W_{r,\eta}^{k}\big) \phi_\eta \\
&& - \big[1-(1-A_8d^{-\gamma_0n}) (1-d^{-2\gamma_0 n})\big] d^{kn}\ \sum_{\eta} \mu\big(\W_{r,\eta}^{k}\big) \phi_\eta \\
&& - d^{kn} \sum_{\eta} \big\langle \mu|_{\W_{r,\eta}^{k}}, \phi -\phi_\eta\big\rangle  + d^{kn}\sum_{\eta} \big\langle\mu|_{\W_{r,\eta}^{k}}, \phi\big\rangle .
\end{eqnarray*}

In the last sum, by Lemma \ref{l:Manhattan}, the first term is bounded from below by a positive constant times $-r d^{kn}$,   hence by a positive constant times $-d^{-\gamma_0 n} d^{kn}$. Since $\#\K=O(r^{-2k})$, the second term is bounded from below by a positive constant times $- rd^{kn} \gtrsim - d^{-\gamma_0 n}  d^{kn}$. The third term satisfies the same property thanks to Proposition \ref{p:unsafe}. The fourth term satisfies the same property because the factor before the sum satisfies it. The same holds for the fifth term because, since $\|\phi\|_{\Cc^1} \lesssim 1$,  we have 
$|\phi-\phi_\eta|\lesssim r \leq d^{-\gamma_0n}$ on each cell $\W_{r,\eta}^{k}$.
Finally, the last term is equal to $d^{kn} \langle \mu, \phi \rangle$ modulo some integral on the boundaries of the cells,  and the mass of $\mu$ on these boundaries can be bounded using Lemma \ref{l:Manhattan}. Hence,  this term is $d^{kn}\langle \mu, \phi \rangle+O(d^{-\gamma_0 n}) d^{kn}$.  This ends the proof.
\end{proof}

\begin{proof}[End of the proof of Theorem \ref{t:main}]
By applying  Lemma \ref{l:phi} to $\phi=1$, we get $\#P_{n,\gamma}\geq d^{kn} -A_{10} d^{-\gamma_0n} d^{kn}$. 
Recall also that $\# P_n = d^{kn}+ O(d^{-\gamma_0n} d^{kn})$, see e.g. \cite[Proposition 1.3]{dinh-sibony:cime}. Here, the points in $P_n$ are counted with multiplicities.
We deduce that $\#(P_n\setminus P_{n,\gamma}) = O(d^{-\gamma_0n} d^{kn})$. Note that $\#P_{n,\gamma}\leq \#P_n$ because $P_{n,\gamma} \subset P_n$.

By Lemma  \ref{l:phi},  it remains to show that 
$$\Big\langle\frac{1}{d^{kn}}\sum_{a\in P_n} \delta_a, \phi \Big\rangle \leq \langle \mu,\phi\rangle + A d^{-\gamma_0 n}$$
for some constant $A>0$.  Applying Lemma \ref{l:phi}   to $1-\phi$ instead of $\phi$  gives
$${\# P_{n,\gamma} \over d^{kn}} - \Big\langle\frac{1}{d^{kn}}\sum_{a\in P_{n,\gamma}} \delta_a, \phi \Big\rangle 
=\Big\langle\frac{1}{d^{kn}}\sum_{a\in P_{n,\gamma}} \delta_a, 1-\phi \Big\rangle \geq 1- \langle \mu,\phi\rangle - A_{10} d^{-\gamma_0 n}$$
which implies
$$\Big\langle\frac{1}{d^{kn}}\sum_{a\in P_{n,\gamma}} \delta_a, \phi \Big\rangle \leq  \langle \mu,\phi\rangle + O(d^{-\gamma_0 n}).$$
Finally,  we have
\begin{eqnarray*}
\Big\langle\frac{1}{d^{kn}}\sum_{a\in P_n} \delta_a, \phi \Big\rangle &=& \Big\langle\frac{1}{d^{kn}}\sum_{a\in P_{n,\gamma}} \delta_a, \phi \Big\rangle
+ \Big\langle\frac{1}{d^{kn}}\sum_{a\in P_n\setminus P_{n,\gamma}} \delta_a, \phi \Big\rangle.
\end{eqnarray*}
We have seen that the first term in the last sum is bounded by $\langle \mu,\phi\rangle + O(d^{-\gamma_0 n})$. The second term is equal to $O(d^{-\gamma_0 n})$ thanks to the above discussion on the cardinalities of $P_n$ and $P_{n,\gamma}$. This completes the proof.
\end{proof}

\begin{proof}[Proof of Corollary \ref{c:main}]
The two sets of periodic points mentioned in this corollary are subsets of $P_n\setminus P_{n,\gamma}$. Therefore, the corollary follows from the above discussion about the cardinality of the last set.
\end{proof}

\begin{conjecture} \label{cj:optimal}
Let $f$ be a holomorphic endomorphism of $\P^k$ of algebraic degree $d\geq 2$ and $\mu$ be its equilibrium measure. 
Then,  for every constant $1<\lambda< d^{1/2} $ the following property holds for some constants $0<\gamma<1$ and $A>0$. Let $P_n$ be the set of periodic points of period $n$ of $f$. Let $P_{n,\gamma}$ be the set of points $a\in P_n\cap J_k$ such that $\|Df^n(a)^{-1}\|\leq d^{-{1-\gamma\over 2}n}$. Then, we have
$$\Big| \Big\langle\frac{1}{d^{kn}}\sum_{a\in P_{n,\gamma}} \delta_a -\mu, \phi \Big\rangle \Big| \leq A \lambda^{-n} \|\phi\|_{\Cc^1},$$
for any $\Cc^1$ test function $\phi$ on $\P^k$.
\end{conjecture}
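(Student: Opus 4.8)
The plan is to feed the optimal form of the quantitative pre-image equidistribution into the combinatorial machinery of Sections~\ref{s:branch}--\ref{s:periodic}, tracking every exponent sharply. The input is already of the right quality: \eqref{eq:equidist-preimage-speed} holds for \emph{every} $1<\lambda<d^{1/2}$, so one may replace the rate $d^{-m/3}$ in Theorem~\ref{t:eq-pt} by $d^{-(1/2-\varepsilon)m}$, keeping the logarithmic dependence on $\dist(a,\PC_{n_0})$. All the loss in Theorem~\ref{t:main} is produced afterwards, by three coupled constraints: (i) Proposition~\ref{p:neigh-mes}, whose exponent $\kappa$ depends on the H\"older exponent of the Green function and, together with the inflation of the post-critical degree to $\delta=d^{10\gamma_0 kn}$, forces $\gamma_1\approx\gamma_0\kappa k$; (ii) the hypothesis $d^{-m}<r^{30k}$ of Corollary~\ref{c:orbit-cell}, which forces $\zeta\gtrsim k\gamma_1$; and (iii) the requirement $A_9r^{-2}d^{-(1-2\zeta)n/2}\le d^{-(1-\gamma)n/2}$ of Proposition~\ref{p:per}, which forces $\zeta<\gamma/4$. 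Chaining (i)--(iii) gives $\gamma_0\lesssim 1/(\kappa k^2)$, and the final rate $d^{-\gamma_0 n}$ inherits this bound. So the first task is to strip the $\kappa$ and as many powers of $k$ as possible.

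Concretely, I would re-prove Proposition~\ref{p:neigh-mes} at the true geometric scale: for a hypersurface $V$ of degree $\delta$ one expects $\nu(\Tub(V;\rho))$ to already be small for $\rho=\delta^{-1/2-\varepsilon}$, since by the volume count behind Lemma~\ref{l:exist-I} this is the scale at which even Lebesgue measure barely sees $\Tub(V;\rho)$. The difficulty is that $\mu$ is only \emph{moderate} and may live on a set of positive codimension, so it is not a priori as spread out as Lebesgue; the mechanism to push is the one already used in Lemma~\ref{l:Gamma-neigh}, namely that the local potentials $\log|y-h(x)|$ cutting out the graphs have Lelong number exactly $1$, so Skoda's inequality in the form of \cite{kaufmann:skoda} gives honest exponential decay $\nu(\{\log|y-h|<-M\})\lesssim e^{-c(v)M}$ with $c(v)$ controlled by the H\"older exponent. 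The room for improvement is to replace the union bound over the polynomially-many local graphs by a single global estimate, e.g.\ bounding $\int_{\P^k}\dist(\cdot,V)^{-s}\,d\mu$ for $s$ slightly below the optimal exponent, using that $V=\PC_{\lfloor\sigma n\rfloor}$ sits in a pencil of hypersurfaces of comparable degree and that $\mu$ has a H\"older potential. Combined with the sharp equidistribution above and a multiscale choice of Manhattans (cell sizes $r_\nu=d^{-c_\nu n}$ for $c_\nu$ in a fine partition of $(0,1/2)$, so that on a cell of size $r_\nu$ the test function varies by at most $\|\phi\|_{\mathcal C^1}r_\nu$, which absorbs the $r_\nu^{-2}$ coming from the cut-off functions in Corollaries~\ref{c:orbit-street}--\ref{c:orbit-cell} and Lemma~\ref{l:fixed}), this should yield a quantitative improvement of Theorem~\ref{t:main} -- but I expect a residual barrier of the form $\gamma_0\lesssim 1/k$ built into the $2k$-dimensional cell combinatorics, still short of the exponent $1/2-\varepsilon$ demanded by the conjecture.

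Reaching the optimal rate almost certainly requires leaving the inverse-branch framework. The realistic route is the arithmetic one: when $f$ is defined over a number field the conjecture is known in every dimension (\cite{yap,DY25}), the rate there being governed by a height/arithmetic-intersection estimate rather than by pluripotential theory. For a general $f$ over $\C$ one would specialize to this case, following Gauthier--Vigny's treatment of $k=1$ in \cite{gauthier-vigny:periodic} (after Favre--Rivera-Letelier and Okuyama, \cite{favre-rivera-letelier,okuyama}): spread $f$ out over a finitely generated base, equip the relevant line bundle with an adelic semipositive metric whose archimedean part is the Green metric, and apply the quantitative equidistribution of small points fibrewise to the sequence of periodic-point cycles. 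The main obstacle, where I expect the real work to lie, is precisely this higher-dimensional specialization: one needs a robust theory of variation of the equilibrium measure $\mu_t=T_t^k$ in families of endomorphisms of $\P^k$ -- including the behaviour of the post-critical set and uniform control of the moderate constant of $\mu_t$ -- together with an arithmetic Hilbert--Samuel type height estimate at the level of the top self-intersection $T^k$ rather than of a $(1,1)$-current. Short of that, the dynamical scheme above still seems to be the only available approach, and it does not reach the conjectured exponent.
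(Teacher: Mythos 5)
The statement you are addressing is stated in the paper as Conjecture~\ref{cj:optimal}: the authors give no proof of it, and their Theorem~\ref{t:main} deliberately stops at a rate $\xi^{\alpha n}$ with a non-optimal $\xi$ determined by the H\"older exponent of the Green function. So there is no paper proof to compare against, and the decisive question is whether your text actually proves the conjecture. It does not, and you say so yourself: the first part is a plan to sharpen Proposition~\ref{p:neigh-mes} to the scale $\delta^{-1/2-\varepsilon}$ and to re-run the Manhattan/inverse-branch scheme with multiscale cells, but the key step is only conjectured (``one expects'', ``this should yield''), and even granting it you concede a residual barrier $\gamma_0\lesssim 1/k$, ``still short of the exponent $1/2-\varepsilon$ demanded by the conjecture.'' The obstruction you would have to overcome is real: the only tools available for the tube estimate are the moderation of $\mu$ and the Skoda-type bound \eqref{eq:skoda-sublevel}, both of which give a decay exponent tied to the H\"older exponent of the potential (this is exactly where $\kappa$ in Proposition~\ref{p:neigh-mes} comes from), and nothing in your sketch removes that dependence; replacing the union bound over graphs by an integral of $\dist(\cdot,V)^{-s}$ against $\mu$ runs into the same constraint, since the admissible $s$ is again governed by the moderation constant, not by the volume heuristic behind Lemma~\ref{l:exist-I}.

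The second part, specializing to the arithmetic method of Favre--Rivera-Letelier, Okuyama, Gauthier--Vigny, Yap and Dinh--Yap, is likewise a route rather than an argument: those results cover $k=1$ for arbitrary rational maps and $k\geq 2$ only over number fields, and the spreading-out/specialization step for a general endomorphism of $\P^k$ over $\C$ -- uniform control of $\mu_t$, of the post-critical set, and a height estimate at the level of $T^k$ -- is precisely the open problem, as you acknowledge. Your diagnosis of where the loss occurs in the paper's proof of Theorem~\ref{t:main} (the chain of constraints $\gamma_1\approx\gamma_0\kappa k$, the hypothesis $d^{-m}<r^{30k}$, and $\zeta<\gamma/4$ feeding into the final rate $d^{-\gamma_0 n}$) is accurate and matches the structure of Sections~\ref{s:branch}--\ref{s:periodic}, so as a research outline this is sensible; but as a proof of the conjectured rate $\lambda^{-n}$ for every $1<\lambda<d^{1/2}$ it contains no complete argument, and the statement remains open exactly as the authors present it.
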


\begin{conjecture} \label{cj:opttimal-point}
Let  $A_n$  denote the number of non-repelling periodic points of order $n$ of $f$ and $B_n$ be the number of periodic points of order $n$ outside the small Julia set $J_k$, counting  multiplicities.   Then,  for every constant  $1<\lambda< d^{1/2} $  we have  $A_n= O(  \lambda^{-n}  d^{kn})$ and $B_n= O(  \lambda^{-n}  d^{kn})$ as $n$ tends to infinity.
\end{conjecture}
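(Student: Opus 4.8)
The plan is to derive Conjecture \ref{cj:opttimal-point} from Conjecture \ref{cj:optimal} in exactly the way Corollary \ref{c:main} was derived from Theorem \ref{t:main}, so that the entire difficulty is concentrated in Conjecture \ref{cj:optimal}; I then sketch how one might hope to prove the latter by sharpening every step of the proof of Theorem \ref{t:main}.

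For the reduction, fix $1<\lambda<d^{1/2}$ and let $0<\gamma<1$ and $A>0$ be as in Conjecture \ref{cj:optimal}. For $n$ large, every non-repelling periodic point $a$ of order $n$ satisfies $\|Df^n(a)^{-1}\|\geq 1 > d^{-\frac{1-\gamma}{2}n}$ and hence lies in $P_n\setminus P_{n,\gamma}$, and every periodic point of order $n$ outside $J_k$ lies in $P_n\setminus P_{n,\gamma}$ since $P_{n,\gamma}\subset J_k$ by definition. Taking $\phi\equiv 1$ in Conjecture \ref{cj:optimal} gives $\#P_{n,\gamma}\geq d^{kn}-A\lambda^{-n}d^{kn}$, while B\'ezout gives $\#P_n=d^{kn}+O(d^{(k-1)n})$; since $\lambda<d$ we have $d^{(k-1)n}=O(\lambda^{-n}d^{kn})$, hence $\#(P_n\setminus P_{n,\gamma})=O(\lambda^{-n}d^{kn})$, and therefore $A_n=O(\lambda^{-n}d^{kn})$ and $B_n=O(\lambda^{-n}d^{kn})$.

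To attack Conjecture \ref{cj:optimal} I would reopen the proof of Theorem \ref{t:main} and track the exponent of $\xi$. The first improvement is to replace the rate $d^{-m/3}$ in Theorem \ref{t:eq-pt} by the sharp one: for any $1<\lambda_0<d^{1/2}$, the results of \cite{dinh-sibony:equid-speed} yield $|\langle d^{-km}(f^m)^*(\delta_a)-\mu,\phi\rangle|\lesssim \bigl(1+\log^+\tfrac{1}{\dist(a,\PC_{n_0})}\bigr)\lambda_0^{-m}\|\phi\|_{\Cc^1}$. One then re-derives Corollaries \ref{c:orbit-street} and \ref{c:orbit-cell} and re-optimises the free parameters $\zeta,\gamma_0,\gamma_1$ together with the scale $r=d^{-\gamma_1 n}$ of the Manhattan. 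The intrinsic tension, however, is this: producing a fixed point in a cell via Kobayashi hyperbolicity (Lemma \ref{l:fixed}) needs an inverse branch of diameter well below the cell size $r$, so by Proposition \ref{p:inverse} one can only afford to stay beyond the postcritical set $\PC_{\floor{\theta n}}$ for $\theta$ a small multiple of $1-\zeta$; and discarding the cells meeting a tube of radius $\delta^{-\kappa}$ around that postcritical set costs, via Proposition \ref{p:neigh-mes}, a $\mu$-mass bounded only by $\delta^{-1}$, where $\kappa$ depends on the H\"older exponent of the Green function. This forces $r=d^{-\gamma_1 n}$ with $\gamma_1$ no larger than roughly $1/(\kappa k^2)$; since the per-cell equidistribution error scales like $r^{-2}$ times the mixing rate while there are $O(r^{-2k})$ cells, the exponent of $\xi$ ends up comparable to $1/(\kappa k^2)$ rather than to $1/2$.

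The main obstacle is therefore not any single estimate but the interplay between the scale $r$ of the covering, the degree $\sim d^{(k-1)n}$ of the order-$n$ postcritical set, and the accumulated error over $O(r^{-2k})$ cells. Overcoming it would require one of the following: (i) a bound for $\mu(\Tub(V;\varepsilon))$ for high-degree $V$ with a constant independent of the H\"older exponent of the potential of $\mu$ --- ideally with $\kappa$ in Proposition \ref{p:neigh-mes} close to $1$ --- so that cells only polynomially small in $1/n$ suffice; (ii) a non-uniform covering, refined near the postcritical set, whose cardinality no longer grows as a large power of $1/r$; or (iii) transplanting the finer counting/arithmetic arguments of \cite{favre-rivera-letelier,gauthier-vigny:periodic,yap,DY25}, which already achieve essentially optimal rates in the cases they cover, to the present transcendental higher-dimensional setting. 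The regularity of $\mu$ near subvarieties in (i) seems at once the most natural and the hardest ingredient, and I would start there.
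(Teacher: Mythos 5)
The statement you are addressing is posed in the paper as an open conjecture, with no proof given; the paper only proves the weaker Corollary \ref{c:main}, where the rate $\lambda^{-n}$ for arbitrary $1<\lambda<d^{1/2}$ is replaced by the non-optimal rate $\xi^{n}$ coming from Theorem \ref{t:main}. Your reduction of Conjecture \ref{cj:opttimal-point} to Conjecture \ref{cj:optimal} is correct and is exactly parallel to how the paper deduces Corollary \ref{c:main}: taking $\phi\equiv 1$ gives $\#P_{n,\gamma}\geq d^{kn}-A\lambda^{-n}d^{kn}$, B\'ezout gives $\#P_n=d^{kn}+O(d^{(k-1)n})$ with multiplicities, and since repelling points have multiplicity one and $\lambda<d$, one gets $\#(P_n\setminus P_{n,\gamma})=O(\lambda^{-n}d^{kn})$, which dominates both $A_n$ (non-repelling points satisfy $\|Df^n(a)^{-1}\|\geq 1$) and $B_n$ (since $P_{n,\gamma}\subset J_k$). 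So the conditional step is sound.

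The genuine gap is that the argument is conditional on Conjecture \ref{cj:optimal}, which neither the paper nor your proposal proves; the remainder of your submission is a research program, not a proof. Your diagnosis of the bottleneck is essentially the right one --- the tension between the scale $r=d^{-\gamma_1 n}$ of the Manhattan, the exponent $\kappa$ in Proposition \ref{p:neigh-mes} (which depends on the H\"older exponent of the Green function), the $O(r^{-2k})$ cells each carrying an equidistribution error of size $r^{-2}$ times the mixing rate, and the diameter requirement for Lemma \ref{l:fixed} --- and it matches the paper's own remark that $\xi$ can be made explicit in terms of $d$ and the H\"older exponent but is not optimal. One caveat on your proposed sharpening of Theorem \ref{t:eq-pt}: the rate $\lambda_0^{-m}$ from \cite{dinh-sibony:equid-speed} comes with an exceptional set $\Ec_{\lambda_0}$ depending on $\lambda_0$, not with the postcritical set $\PC_{n_0}$, so replacing $d^{-m/3}$ by $\lambda_0^{-m}$ in Corollaries \ref{c:orbit-street} and \ref{c:orbit-cell} is not a drop-in substitution; in any case, even with that improvement your own accounting shows the final exponent is throttled by $\kappa$, so none of (i)--(iii) is carried out and the conjecture remains open after your proposal, as it does in the paper.
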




\begin{thebibliography}{FLMn83}
\bibitem[BD99]{briend-duval:acta}
Jean-Yves Briend and Julien Duval.
\newblock Lyapunov exponent and the distribution of the periodic points of an
  endomorphism of {{\(\mathbb {CP}^k\)}}.
\newblock {\em Acta Math.}, 182(2):143--157, 1999.

\bibitem[BD01]{briend-duval:IHES}
Jean-Yves Briend and Julien Duval.
\newblock Two characterizations of equilibrium measure of an endomorphism of
  {{\(P^k(\mathbb{C})\)}}.
\newblock {\em Publ. Math., Inst. Hautes {\'E}tud. Sci.}, 93:145--159, 2001.

\bibitem[Bro65]{brolin}
Hans Brolin.
newblock Invariant sets under iteration of rational functions.
\newblock {\em Ark. Mat.}, 6:103--144 (1965), 1965.

\bibitem[DNS10]{DNS:JDG}
Tien-Cuong Dinh, Vi\^{e}t-Anh Nguy\^{e}n, and Nessim Sibony.
\newblock Exponential estimates for plurisubharmonic functions and stochastic
  dynamics.
\newblock {\em J. Differential Geom.}, 84(3):465--488, 2010.

\bibitem[DS10a]{dinh-sibony:cime}
Tien-Cuong Dinh and Nessim Sibony.
\newblock Dynamics in several complex variables: endomorphisms of projective
  spaces and polynomial-like mappings.
\newblock In {\em Holomorphic dynamical systems}, volume 1998 of {\em Lecture
  Notes in Math.}, pages 165--294. Springer, Berlin, 2010.

\bibitem[DS10b]{dinh-sibony:equid-speed}
Tien-Cuong Dinh and Nessim Sibony.
\newblock Equidistribution speed for endomorphisms of projective spaces.
\newblock {\em Math. Ann.}, 347(3):613--626, 2010.

\bibitem[DY25]{DY25}
Tien-Cuong Dinh and Jit~Wu Yap.
\newblock {\em Unpublished note}, 2025.


\bibitem[FRL06]{favre-rivera-letelier}
Charles Favre and Juan Rivera-Letelier.
\newblock \'{E}quidistribution quantitative des points de petite hauteur sur la
  droite projective.
\newblock {\em Math. Ann.}, 335(2):311--361, 2006.

\bibitem[FS94]{fornaess-sibony:93}
John~Erik Forn{\ae}ss and Nessim Sibony.
\newblock Complex dynamics in higher dimensions.
\newblock In {\em Complex potential theory. Proceedings of the NATO Advanced
  Study Institute and S\'eminaire de math\'ematiques superi\'eures, Montr\'eal,
  Canada, July 26 - August 6, 1993}, pages 131--186. Dordrecht: Kluwer Academic
  Publishers, 1994.

\bibitem[FS01]{fornaess-sibony:examples}
John~Erik Forn{\ae}ss and Nessim Sibony.
\newblock Dynamics of {{\(\mathbb{P}^2\)}} (examples).
\newblock In {\em Laminations and foliations in dynamics, geometry and
  topology. Proceedings of the conference held at SUNY at Stony Brook, USA, May
  18--24, 1998}, pages 47--85. Providence, RI: American Mathematical Society
  (AMS), 2001.
  
\bibitem[FLM83]{freire-lopes-mane}
Alexandre Freire, Artur Lopes, and Ricardo Ma\~n\'e.
\newblock An invariant measure for rational maps.
\newblock {\em Bol. Soc. Brasil. Mat.}, 14(1):45--62, 1983.  

\bibitem[GV25]{gauthier-vigny:periodic}
Thomas Gauthier and Gabriel Vigny.
\newblock Quantitative equidistribution of periodic points for rational maps.
\newblock {\em Preprint,upcoming}, 2025.

\bibitem[HP94]{hubbard-papadopol}
John~H. Hubbard and Peter Papadopol.
\newblock Superattractive fixed points in {{\({\mathbb{C}}^ n\)}}.
\newblock {\em Indiana Univ. Math. J.}, 43(1):321--365, 1994.

\bibitem[Kau17]{kaufmann:skoda}
Lucas Kaufmann.
\newblock A {S}koda-type integrability theorem for singular {M}onge-{A}mp\`ere
  measures.
\newblock {\em Michigan Math. J.}, 66(3):581--594, 2017.

\bibitem[Lel57]{lelong}
Pierre Lelong.
\newblock Int{\'e}gration sur un ensemble analytique complexe.
\newblock {\em Bull. Soc. Math. Fr.}, 85:239--262, 1957.

\bibitem[Lyu82]{lyubich2}
Mikhail Lyubich.
\newblock The measure of maximal entropy of a rational endomorphism of a
  {R}iemann sphere.
\newblock {\em Funktsional. Anal. i Prilozhen.}, 16(4):78--79, 1982.

\bibitem[Lyu83]{lyubich}
Mikhail Lyubich.
\newblock Entropy properties of rational endomorphisms of the {R}iemann sphere.
\newblock {\em Ergodic Theory Dynam. Systems}, 3(3):351--385, 1983.

\bibitem[Oku16]{okuyama}
Y\^{u}suke Okuyama.
\newblock Effective divisors on the projective line having small diagonals and
  small heights and their application to adelic dynamics.
\newblock {\em Pacific J. Math.}, 280(1):141--175, 2016.

\bibitem[SW80]{sibony-wong}
Nessim Sibony and Pit-Mann Wong.
\newblock Some results on global analytic sets.
\newblock Semin. {P}. {Lelong} - {H}. {Skoda}, {Analyse}, {Annees} 1978/79,
  {Lect}. {Notes} {Math}. 822, 221-237 (1980)., 1980.

\bibitem[Tri78]{triebel}
Hans Triebel.
\newblock {\em Interpolation theory, function spaces, differential operators},
  volume~18 of {\em North-Holland Mathematical Library}.
\newblock North-Holland Publishing Co., Amsterdam-New York, 1978.

\bibitem[Yap24]{yap}
Jit~Wu Yap.
\newblock Quantitative equidistribution of small points for canonical heights.
\newblock {\em arxiv:2410.21679}, 2024.

\end{thebibliography}
\end{document}